\newtheorem{lma}{Lemma}[section]
\newaliascnt{thmCt}{lma}
\newtheorem{thm}[thmCt]{Theorem}
\newaliascnt{corCt}{lma}
\newtheorem{cor}[corCt]{Corollary}
\newaliascnt{prpCt}{lma}
\newtheorem{prp}[prpCt]{Proposition}
\theoremstyle{definition}
\newaliascnt{pgrCt}{lma}
\newtheorem{pgr}[pgrCt]{}
\newaliascnt{dfnCt}{lma}
\newtheorem{dfn}[dfnCt]{Definition}
\newaliascnt{rmkCt}{lma}
\newtheorem{rmk}[rmkCt]{Remark}
\newaliascnt{exaCt}{lma}
\newtheorem{exa}[exaCt]{Example}
\newaliascnt{qstCt}{lma}
\newtheorem{qst}[qstCt]{Question}
\newcounter{theoremintro}
\newtheorem{dfnIntro}[theoremintro]{Definition}
\newtheorem{prpIntro}[theoremintro]{Proposition}
\newaliascnt{thmIntroCt}{theoremintro}
\newtheorem{thmIntro}[thmIntroCt]{Theorem}
\def\today{\number\day\space\ifcase\month\or   January\or February\or
   March\or April\or May\or June\or   July\or August\or September\or
   October\or November\or December\fi\   \number\year}
\newcommand{\NN}{{\mathbb{N}}}
\newcommand{\CC}{{\mathbb{C}}}
\newcommand{\Bdd}{{\mathcal{B}}}
\newcommand{\Cpct}{{\mathcal{K}}}
\newcommand{\ca}{$C^*$-algebra}
\newcommand{\andSep}{\,\,\,\text{ and }\,\,\,}
\newcommand{\axiomO}[1]{(O#1)}
\newcommand{\CatCu}{\ensuremath{\mathrm{Cu}}}
\newcommand{\CuSgp}{$\CatCu$-sem\-i\-group}
\DeclareMathOperator{\Cu}{Cu}
\DeclareMathOperator{\Prim}{Prim}
\title{Nowhere scattered C*-algebras}
\date{\today}
\author{Hannes Thiel}
\address{Hannes Thiel,
Department of Mathematics, Kiel University, Heinrich-Hecht-Platz~6, 24118 Kiel, Germany.}
\email{hannes.thiel@math.uni-kiel.de}
\urladdr{www.hannesthiel.org}
\author{Eduard Vilalta}
\address{Eduard Vilalta,
Departament de Matem\`{a}tiques,
Universitat Aut\`{o}noma de Barcelona,
08193 Bellaterra, Barcelona, Spain}
\email{evilalta@mat.uab.cat}
\urladdr{www.eduardvilalta.com}
\thanks{The first named author was partially supported by the Deutsche Forschungsgemeinschaft (DFG, German Research Foundation) under Germany's Excellence Strategy EXC 2044-390685587 (Mathematics M\"{u}nster: Dynamics-Geometry-Structure) and by the ERC Consolidator Grant No.~681207.
The second named author was partially supported by MINECO (grant No.\ PRE2018-083419 and No.\ PID2020-113047GB-I00), and by the Comissionat per Universitats i Recerca de la Generalitat de Catalunya (grant No.\ 2017SGR01725). 
}
\subjclass[2010]%
{Primary
46L05; 
Secondary
19K14, 
46L80, 
46L85. 
}
\keywords{$C^*$-algebras, scatteredness, Cuntz semigroups, divisibility}
\date{\today}
\begin{document}

\begin{abstract}
We say that a \ca{} is \emph{nowhere scattered} if none of its quotients contains a minimal open projection.
We characterize this property in various ways, by topological properties of the spectrum, by divisibility properties in the Cuntz semigroup, by the existence of Haar unitaries for states, and by the absence of nonzero ideal-quotients that are elementary, scattered or type~$\mathrm{I}$.

Under the additional assumption of real rank zero or stable rank one, we show that nowhere scatteredness implies even stronger divisibility properties of the Cuntz semigroup.
\end{abstract}

\maketitle

\section{Introduction}

A topological space is said to be \emph{scattered} if each of its nonempty closed subsets contains an isolated point.
Analogously, a \ca{} is \emph{scattered} if each of its nonzero quotients contains a minimal open projection; 
see \autoref{sec:scattered}.
Here, a \emph{minimal open projection} in a \ca{} $A$ is a projection $p\in A$ such that $pAp=\CC p$;
see \autoref{prp:MinOpenProj}.
In this paper, we consider \ca{s} that are very far from scattered:

\begin{dfnIntro}
\label{dfn:algNWS}
A \ca{} is \emph{nowhere scattered} if none of its quotients contains a minimal open projection.
\end{dfnIntro}

Similarly, we say that a topological space is \emph{nowhere scattered} if none of its closed subsets contains an isolated point;
see \autoref{dfn:spaceNWS}.
Thus, one-element sets cannot be closed, and nowhere scattered spaces are far from being $T_1$, let alone Hausdorff.
In \autoref{sec:topological}, we study this topological notion and the relation to its noncommutative counterpart. We prove that a separable \ca{} is nowhere scattered if and only if its spectrum is;
see \autoref{prp:charTopological}.

Besides this topological description, we also show that nowhere scatteredness admits various further characterizations: 
in terms of the structure of ideal-quotients (\autoref{sec:NWS}), 
by the existence of Haar unitaries and maximal abelian subalgebras  with diffuse spectrum (\autoref{sec:diffuse}), 
and by divisibility properties of the Cuntz semigroup (\autoref{sec:EleIdeQuo}).
We present a number of these characterizations in \autoref{thmChar} below.

For \ca{s} of real rank zero, nowhere scatteredness can be described in terms of divisibility properties of the Murray-von Neumann semigroup of projections;
see \autoref{prp:charDivRR0}.
However, in contrast with the Cuntz semigroup, this invariant does not contain enough information to characterize nowhere scatteredness in the general setting;
see \autoref{rmk:charDivRR0}. In \autoref{prp:charDivSR1}, we also provide additional characterizations of nowhere scatteredness for \ca{s} of stable rank one.

\begin{thmIntro}[{\ref{prp:firstChar}, \ref{prp:charDiffuse}, \ref{prp:charDiv}}]
\label{thmChar}
Let $A$ be a \ca.
Then the following are equivalent:
\begin{enumerate}
\item
$A$ is nowhere scattered;
\item
every quotient of $A$ is antiliminal;
\item
$A$ has no nonzero elementary/scattered/type~$\mathrm{I}$ ideal-quotients;
\item
no hereditary sub-\ca{} of $A$ admits a one-dimensional (a finite-dimen\-sional) irreducible representation;
\item
every positive functional (pure state) on $A$ is nowhere scattered;
\item
for every pure state $\varphi$ on $A$ and every ideal $I\subseteq A$, there exists a Haar unitary in $\widetilde{I}$ for $\varphi$ (there exists a masa $C_0(X)\subseteq I$ such that $\varphi$ induces a diffuse measure on $X$ with total mass $\|\varphi|_I\|$);
\item
for every positive functional $\varphi$ on $A$ and every hereditary sub-\ca{} $B\subseteq A$, there exists a Haar unitary in $\widetilde{B}$ for $\varphi$ (there exists a masa $C_0(X)\subseteq B$ such that $\varphi$ induces a diffuse measure on $X$ with total mass $\|\varphi|_B\|$);
\item
every element in $\Cu(A)$ is weakly $(2,\omega)$-divisible;
\item
every element in $\Cu(A)$ is weakly $(k,\omega)$-divisible for every $k\geq 2$.
\end{enumerate}
\end{thmIntro}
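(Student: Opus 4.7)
The plan is to split the nine conditions into three clusters matching the three cited propositions: \emph{structural} ((1)--(4)), \emph{analytic} ((5)--(7)), and \emph{order-theoretic} ((8)--(9)). I would prove equivalences within each cluster and then bridge them through the structural condition (2), since all three clusters are ultimately detecting the same obstruction: a one-dimensional irreducible representation somewhere in the algebra.

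For the structural cluster, I would first observe that a \ca{} $B$ admits a minimal open projection $p$ if and only if it has a one-dimensional irreducible representation: the compression $b\mapsto pbp$ is a character, and conversely a character on $B$ defines an open rank-one projection in $B^{**}$ via the Akemann correspondence between hereditary sub-\ca{s} and open projections. Applying this to all quotients yields (1)$\Leftrightarrow$(2). The equivalence with (3) then reduces to the classical structure theory (Dixmier) that type~$\mathrm{I}$, scattered and elementary \ca{s} all contain a nonzero CCR ideal, which after further quotienting yields a character; and the weakest of the three options (type~$\mathrm{I}$) is already implied by failure of antiliminality. The equivalence with (4) follows from the Rieffel correspondence between irreducible representations of $A$ and those of its hereditary sub-\ca{s}, combined with the observation that a finite-dimensional irreducible representation of such a sub-\ca{} extends to $A$ and, after quotienting by the kernel of a suitable CCR ideal, produces a character on an ideal-quotient.

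For the analytic cluster, I would apply the GNS construction to each positive functional $\varphi$. Assuming (2), the goal is to build inside any ideal $I$ (resp.\ hereditary sub-\ca{} $B$) a unital copy of $C(\mathbb{T})$ on which $\varphi$ restricts to Haar measure; its canonical generator is then the required Haar unitary, and the corresponding masa $C_0(X)$ supports the diffuse measure. The construction proceeds by a Glimm-type halving iteration inside the hereditary sub-\ca{}, exploiting antiliminality to split supports in half at each stage while controlling the values of $\varphi$. Conversely, the existence of a Haar unitary in $\widetilde{I}$ for $\varphi$ forces the associated spectral measure to be atomless, which rules out the existence of a character of a quotient and hence any minimal open projection. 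The equivalence between the pure-state/ideal formulation (6) and the positive-functional/hereditary formulation (7) is obtained by compressing a positive functional by a contraction in the hereditary sub-\ca{} and passing to the GNS representation, which reduces the hereditary case to an ideal of the GNS image and the functional to a (multiple of a) pure state.

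For the order-theoretic cluster, (9)$\Rightarrow$(8) is trivial. For (8)$\Rightarrow$(2), I would argue by contraposition: a minimal open projection $p$ in some quotient $A/I$ yields a Cuntz class $[p]\in \Cu(A/I)$ which is a nonzero minimal element in the semigroup, and hence cannot be weakly $(2,\omega)$-decomposed; lifting to $\Cu(A)$ along the canonical map produces a Cuntz class in $A$ violating (8). Conversely, for (2)$\Rightarrow$(9) the strategy is to promote antiliminality of hereditary sub-\ca{s} to the presence of arbitrarily large full matrix sub-\ca{s} $M_k$ supported inside any prescribed hereditary sub-\ca{} (again by Glimm-type halving), which supplies the $k$-fold decompositions required for weak $(k,\omega)$-divisibility of an arbitrary Cuntz class. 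The hardest step, I expect, will be the Haar unitary construction in the analytic cluster, since it must reconcile a purely algebraic global condition (nowhere scatteredness) with a measure-theoretic property (atomlessness of spectral measures) uniformly over all positive functionals; the iterated halving must simultaneously respect the ideal/hereditary constraint, the unitary (hence full-spectrum) condition on the generator, and the prescribed mass normalization, which is more delicate than either the structural or the divisibility side.
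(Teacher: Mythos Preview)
Your three-cluster organization matches the paper's, but the structural cluster contains a genuine error. You claim that a \ca{} $B$ admits a minimal open projection if and only if it has a one-dimensional irreducible representation. Both directions fail: $M_2(\CC)$ has the rank-one projection $e_{11}$ as a minimal open projection but no character, and $C([0,1])$ has characters but no minimal open projection (every nonzero projection generates a hereditary subalgebra strictly larger than $\CC$). The compression $b\mapsto pbp$ is not multiplicative on $B$, only on $pBp$; and the support projection of a character lives in $B^{**}$ but need not lie in $B$. The paper's route from (1) to (2) instead takes a nonzero abelian element $b$ in a non-antiliminal quotient, picks a character of the commutative hereditary subalgebra $\overline{bBb}$, and then passes to a \emph{further} quotient (by the ideal generated by the kernel of that character) to produce the minimal open projection. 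The passage back from (4) to (1) likewise goes through hereditary subalgebras rather than the quotient itself. Without this extra quotient/hereditary step your (1)$\Leftrightarrow$(2) collapses.

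Two smaller points. For the analytic cluster, the paper does not build Haar unitaries from scratch: it invokes \cite{Thi20arX:diffuseHaar} for the equivalence between nowhere scatteredness of a functional and existence of Haar unitaries/diffuse masas, and uses a short unitization lemma to pass between the ideal and hereditary formulations; your proposed Glimm-halving construction may work but is a substantially harder route, and your converse (``a Haar unitary rules out a character of a quotient'') is not the right contrapositive---the paper instead shows that a nonzero scattered ideal-quotient supports a pure state that fails to be nowhere scattered. For the order-theoretic cluster, the paper's main argument is not via matrix embeddings but via a new axiom \axiomO{8} for \CuSgp{s} (a lifting property for orthogonal decompositions modulo an idempotent ideal), which it verifies for $\Cu(A)$ and then uses abstractly; your Glimm-halving idea is closer in spirit to the alternative route the paper sketches in a remark using \cite{RobRor13Divisibility}, so that part is salvageable but different in method.
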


In \autoref{sec:permanence}, we study permanence properties of such \ca{s}.

\begin{prpIntro}[{\ref{prp:permHerQuot}, \ref{prp:permLimit}}]
Nowhere scatteredness passes to hereditary sub-\ca{s} (in particular, ideals), to quotients, and to inductive limits.
\end{prpIntro}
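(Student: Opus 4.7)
The plan is to leverage the several equivalent characterizations of nowhere scatteredness established in \autoref{thmChar}, selecting the most convenient formulation for each of the three permanence claims.

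For hereditary sub-\ca{s}, the tool of choice is characterization (iv) of \autoref{thmChar}: $A$ is nowhere scattered if and only if no hereditary sub-\ca{} of $A$ admits a one-dimensional irreducible representation. Since hereditarity is transitive, any hereditary sub-\ca{} of a hereditary sub-\ca{} $B \subseteq A$ is itself hereditary in $A$, and the characterization forbids a one-dimensional irreducible representation on any such sub-\ca{}. This yields nowhere scatteredness of $B$, and the case of ideals is then automatic.

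For quotients, the statement is essentially a restatement of the definition: every quotient of a quotient $A/I$ is isomorphic to $A/J$ for some ideal $J \supseteq I$, and in particular is a quotient of $A$. Any minimal open projection in such a quotient would already contradict $A$ being nowhere scattered.

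For inductive limits, I would pass to the Cuntz semigroup and use characterization (viii): every element of $\Cu(A)$ is weakly $(2,\omega)$-divisible. Writing $A = \lim_i A_i$ with each $A_i$ nowhere scattered, the continuity of $\Cu$ on inductive limits gives $\Cu(A) = \lim_i \Cu(A_i)$ in the category $\CatCu$, with canonical maps $\iota_i \colon \Cu(A_i) \to \Cu(A)$. For $x \in \Cu(A)$ and $x' \ll x$, standard $\CatCu$-inductive limit techniques produce an index $i$ and an element $z \in \Cu(A_i)$, together with an auxiliary $z' \ll z$, such that $x' \leq \iota_i(z')$ and $\iota_i(z) \leq x$. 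Applying weak $(2,\omega)$-divisibility of $z$ inside $\Cu(A_i)$ furnishes $y_1,\ldots,y_n \in \Cu(A_i)$ with $2y_k \leq z$ for each $k$ and $z' \leq y_1 + \cdots + y_n$, and pushing these relations through $\iota_i$ yields the required divisibility witnesses for $x$ in $\Cu(A)$.

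I expect the inductive limit step to be the only nontrivial ingredient. The main technical obstacle is lifting the way-below pair $x' \ll x$ in $\Cu(A)$ to a single stage $\Cu(A_i)$ together with an appropriate $z' \ll z$, so that the divisibility witnesses produced at that stage can be transported to the limit. This is routine in $\CatCu$-inductive limit theory, but requires careful handling of the way-below relation when interpolating between $x'$ and $x$.
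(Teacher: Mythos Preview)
Your treatment of hereditary sub-\ca{s} and quotients is essentially the paper's: it too invokes \autoref{prp:firstChar} and observes that the hereditary-subalgebra characterization is transitive (the paper picks the finite-dimensional variant, condition~(7), while you pick the one-dimensional one, condition~(8), but this is immaterial), and that quotients-of-quotients are quotients (the paper phrases this via condition~(2), but the content is the same).

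For inductive limits your route is correct but genuinely different. The paper does \emph{not} pass to the Cuntz semigroup. Instead it factors the limit maps through their images, so that the limit is approximated by a family of sub-\ca{s} each of which is a quotient of some $A_\lambda$ (hence nowhere scattered by the quotient case just established). It then proves a separate permanence-under-approximation result (\autoref{prp:permApprox}): if $I\subseteq J\subseteq A$ has $J/I$ scattered, intersect with each approximating $A_\lambda$, use that scatteredness passes to subalgebras to force $A_\lambda\cap J=A_\lambda\cap I$, and conclude $J=I$ by a cut-down argument. Your approach via continuity of $\Cu$ and lifting the pair $x'\ll x$ to a stage $\Cu(A_i)$ works and is clean, but it leans on \autoref{prp:charDiv}, which in the paper's internal logic is established only in \autoref{sec:EleIdeQuo}, well after the permanence section; there is no circularity, but the paper's argument is self-contained at the \ca{} level and, as a byproduct, yields the approximation statement \autoref{prp:permApprox}, which is reused for the L{\"o}wenheim--Skolem result \autoref{prp:LS}.
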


Further, nowhere scatteredness satisfies the L{\"o}wenheim-Skolem condition:
For every nowhere scattered \ca{}, the collection of \emph{separable}, nowhere scattered sub-\ca{s} is $\sigma$-complete and cofinal among all separable sub-\ca{s};
see \autoref{prp:LS}.

The concept of nowhere scattered \ca{s} has implicitly appeared in the literature before, but not under this name (or any name for that matter).
For example, our characterizations in terms of divisibility properties of $\Cu(A)$ can also be derived from results in \cite{RobRor13Divisibility};
see \autoref{rmk:RobRor}.
Nowhere scattered, real rank zero \ca{s} have been considered in \cite{PerRor04AFembeddings, EllRor06Perturb}.

The term `nowhere scattered' was first used in \cite{Thi20arX:diffuseHaar} to name positive functionals on \ca{s} that give no weight to scattered ideal-quotients. 
It is the purpose of this paper to initiate a systematic study of nowhere scatteredness by collecting, systematizing and complementing existing results.

\subsection*{Terminology and notation}
An ideal in a \ca{} means a closed, two-sided ideal.
Given a \ca{} $A$, we use $A_+$ to denote the positive elements in $A$.
Given a Hilbert space $H$, we let $\Bdd(H)$ denote the \ca{} of bounded, linear operators on $H$, and $\Cpct(H)$ is the ideal of compact operators.

\section{Scattered C*-algebras}
\label{sec:scattered}

Following \cite[Definition~2.1]{Jen77ScatteredCAlg}, a \ca{} is said to be \emph{scattered} if each of its states is atomic, that is, a countable weighted sum of pure states.
This definition is inspired by a result of Pe{\l}czy\'{n}ski and Semadeni, \cite{PelSem59SpCtsFcts3}, which states that a compact, Hausdorff space $X$ is scattered if and only if every regular Borel probability measure on $X$ is atomic. 
Scatteredness admits various characterizations, both for topological spaces and for \ca{s} -- and several of these descriptions are remarkably similar in the two settings;
see \autoref{pgr:scattered}. 

Note that a locally compact, Hausdorff space $X$ is scattered if and only if its one-point compactification is, and this is in turn equivalent to $C_0(X)$ being a scattered \ca.
Thus, it follows from \cite[Lemma~2.2]{Kus12everySubAF} that a \ca{} $A$ is scattered if and only if for each commutative sub-\ca{} $C_0(X)\subseteq A$ the space $X$ is scattered.

\begin{pgr}
\label{prp:MinOpenProj}
By a \emph{minimal open projection} in a \ca{} $A$ we mean a nonzero projection $p\in A$ such that $pAp=\CC p$.
In some parts of the literature, such projections are simply called `minimal projections', for example \cite[Deﬁnition~1.1]{GhaKos18NCCantorBendixson}.
We do not follow this convention since it could potentially lead to confusion: one could understand a minimal projection in a \ca{} to be a nonzero projection $p$ such that every subprojection $q$ of $p$ satisfies $q=0$ or $q=p$, and this would include projections like the unit of the Jiang-Su algebra.

Let us justify our terminology:
An \emph{open projection} in a \ca{} $A$ is a projection $p\in A^{**}$ such that there exists an increasing net in $A_+$ that converges in the weak*-topology of $A^{**}$ to $p$.
Then $pA^{**}p\cap A$ is a hereditary sub-\ca{} of $A$, and this induces a natural bijection between open projections in $A$ and hereditary sub-\ca{s} of $A$.
For details we refer to \cite[p.77f]{Ped79CAlgsAutGp}.

Claim:
\emph{Let $p\in A^{**}$ be an open projection.
Then $p$ belongs to $A$ and satisfies $pAp=\CC p$ if and only if $p$ is minimal in the sense that for every open projection $q\in A^{**}$ with $q\leq p$ we have $q=0$ or $q=p$.}
Indeed, the forward implication is clear.
Conversely, assume that $p$ is minimal in the above sense and set $B:=pA^{**}p\cap A$.
If $x,y\in B$ satisfy $xy=0$, then $x^*x$ and $yy^*$ are orthogonal, positive elements, whose support projections are orthogonal open subprojections of $p$.
Using minimality of~$p$, we deduce that $x^*x=0$ or $yy^*=0$, which shows that $B$ has no zero divisors and thus $B\cong\CC$ by the Gelfand-Mazur theorem.
This implies that $B$ is unital and thus $p\in A$, and also $pAp=\CC p$.
\end{pgr}

\begin{pgr}
\label{pgr:scattered}
Let $A$ be a \ca{} ($X$ be a locally compact, Hausdorff space). Then, the following are equivalent:
\begin{enumerate}
\item
$A$ is \emph{scattered}, that is, every state on $A$ is atomic;

\noindent 
(every regular Borel probability measure on $X$ is atomic, 
\cite{PelSem59SpCtsFcts3})
\item
$A^{**}$ is an atomic von Neumann algebra, that is, $A^{**}$ is isomorphic to a product of type~$\mathrm{I}$ factors,
\cite[Theorem~2.2]{Jen77ScatteredCAlg};
\item
$A^*$ has the Radon-Nikod\`{y}m property,
\cite[Theorem~3]{Chu81ScatteredRNP};
\item
for every separable sub-\ca{} $B\subseteq A$, the dual space $B^*$ is separable -- to see this, note that~(3) implies~(4) by \cite[Theorem~1]{Chu81ScatteredRNP}, and that~(4) implies~(12) since $C_0((0,1])$ is a separable \ca{} with nonseparable dual space;
\item
$A$ admits a composition series $(I_\lambda)_\lambda$ such that each successive quotient $I_{\lambda+1}/I_\lambda$ is elementary,
\cite[Theorem~2]{Jen78ScatteredCAlg2};
\item
$A$ is type~$\mathrm{I}$ and $\widehat{A}$ is scattered,
\cite[Corollary~3]{Jen78ScatteredCAlg2};
\item
$A$ is type~$\mathrm{I}$ and the center of $A$ is scattered,
\cite[Theorem~2.2]{Kus10CharScattered};
\item
every sub-\ca{} of $A$ has real rank zero,
\cite[Theorem~2.3]{Kus12everySubAF};

\noindent
(every continuous image of $X$ is zero-dimensional, \cite{PelSem59SpCtsFcts3});
\item
every nonzero sub-\ca{} of $A$ contains a minimal open projection, \cite[Theorem~1.4]{GhaKos18NCCantorBendixson};
\item
the spectrum of every self-adjoint element in $A$ is countable,
\cite{Hur78SpectralCharClassCAlg};

\noindent
(every continuous function $X\to\mathbb{R}$ has countable range,
\cite[Corollary~8.5.6]{Sem71BSpCtsFcts})
\item
every quotient of $A$ contains a minimal open projection,
\cite[Theorem~1.4]{GhaKos18NCCantorBendixson};

\noindent
($X$ is \emph{scattered}, that is, every closed subset of $X$ contains an isolated point)
\item
there exists no sub-\ca{} $C_0((0,1])\subseteq A$, \cite[Theorem~1.4]{GhaKos18NCCantorBendixson};

\noindent
(there is no continuous, surjective map $X\to[0,1]$,
\cite{PelSem59SpCtsFcts3})
\end{enumerate}
\end{pgr}

\section{Nowhere scattered C*-algebras}
\label{sec:NWS}

In this section we prove basic characterizations of nowhere scatteredness;
see \autoref{prp:firstChar}.
We observe that (weakly) purely infinite \ca{s} are nowhere scattered, and we show that a von Neumann algebra is nowhere scattered if and only if its type~$\mathrm{I}$ summand is zero; see \autoref{exa:wpi} and \autoref{prp:VNA} respectively.

A \ca{} is \emph{elementary} if it is isomorphic to the algebra of compact operators on some Hilbert space.
An \emph{ideal-quotient} of a \ca{} $A$ is a (closed, two-sided) ideal of a quotient of $A$.
A relatively open subset of a closed subset of a topological space is said to be \emph{locally closed}.
Using the correspondence between ideals (quotients) of $A$ and open (closed) subsets of its primitive ideal space $\Prim(A)$, it follows that ideal-quotients of $A$ correspond to locally closed subsets of $\Prim(A)$.

An element $a\in A_+$ is \emph{abelian} if the heredtiary sub-\ca{} $\overline{aAa}$ is commutative.
A \ca{} is \emph{type~$\mathrm{I}$} if every nonzero quotient contains a nonzero, abelian element;
see \cite[Definition~IV.1.1.6]{Bla06OpAlgs}.
A \ca{} is \emph{antiliminal} if it contains no nonzero abelian positive elements;
see \cite[Definition~IV.1.1.6]{Bla06OpAlgs}.
An irreducible representation $\pi\colon A\to\Bdd(H)$ is said to be \emph{GCR} if $\pi(A)\cap\Cpct(H)\neq\{0\}$.

\begin{thm}
\label{prp:firstChar}
Let $A$ be a \ca.
Then the following are equivalent:
\begin{enumerate}
\item
$A$ is nowhere scattered
(no quotient contains a minimal open projection);
\item
every quotient of $A$ is antiliminal;
\item
$A$ has no nonzero ideal-quotients of type~$\mathrm{I}$;
\item
$A$ has no nonzero scattered ideal-quotients;
\item
$A$ has no nonzero elementary ideal-quotients;
\item
$A$ has no nonzero irreducible GCR representation;
\item
no hereditary sub-\ca{} of $A$ admits a finite-dimensional irreducible representation;
\item
no hereditary sub-\ca{} of $A$ admits a one-dimensional irreducible representation.
\end{enumerate}
\end{thm}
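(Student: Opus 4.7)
The plan is to establish the cyclic chain $(1) \Rightarrow (2) \Rightarrow (3) \Rightarrow (4) \Rightarrow (5) \Leftrightarrow (6) \Rightarrow (7) \Rightarrow (8) \Rightarrow (1)$. Three of these steps are formal: $(3) \Rightarrow (4) \Rightarrow (5)$ follows from the class inclusions ``elementary $\subseteq$ scattered $\subseteq$ type~$\mathrm{I}$'' recorded in entries (6) and (11) of \autoref{pgr:scattered}, and $(7) \Rightarrow (8)$ is immediate. For $(2) \Rightarrow (3)$, if $J/I$ is a nonzero type~$\mathrm{I}$ ideal-quotient of $A$ with a nonzero abelian $a \in (J/I)_+$, then since $J/I$ is an ideal of $A/I$ the hereditary subalgebra $\overline{a(A/I)a}$ of $A/I$ is contained in $J/I$ and therefore coincides with $\overline{a(J/I)a}$, which is commutative; thus $a$ witnesses non-antiliminality of $A/I$.

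The equivalence $(5) \Leftrightarrow (6)$ is the standard dictionary between elementary ideal-quotients and irreducible GCR representations: an irreducible GCR representation $\pi \colon A \to \Bdd(H)$ has $\pi(A) \supseteq \Cpct(H)$, so $\pi^{-1}(\Cpct(H))/\ker\pi \cong \Cpct(H)$ is elementary; conversely, an elementary ideal-quotient $J/I \cong \Cpct(H)$ produces an irreducible representation of $J/I$ (its identity representation) which extends to an irreducible GCR representation of $A/I$ and then pulls back to $A$. For $(8) \Rightarrow (1)$: given a minimal open projection $p$ in a quotient $A/I$, lift $p$ to some $b \in A_+$ and set $B := \overline{bAb}$; the image of $B$ under $A \to A/I$ equals $\overline{p(A/I)p} = \CC p$, so $B$ admits a one-dimensional quotient, providing a one-dimensional irreducible representation that contradicts (8).

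The central tool, which drives both $(6) \Rightarrow (7)$ and $(1) \Rightarrow (2)$, is the extension theorem for irreducible representations of hereditary sub-\ca{s}: given an irreducible representation $\pi_0 \colon B \to \Bdd(H_0)$ of a hereditary sub-\ca{} $B \subseteq A$, there exists an irreducible representation $\tilde\pi \colon A \to \Bdd(H)$ with $H_0 \subseteq H$ such that $\tilde\pi|_B$ acts as $\pi_0$ on $H_0$ and as zero on $H_0^\perp$. If $\pi_0$ is finite-dimensional, then $\tilde\pi(B)$ consists of finite-rank operators, hence $\tilde\pi$ is GCR, and this yields $(6) \Rightarrow (7)$. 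For $(1) \Rightarrow (2)$, if some quotient $A/I$ is not antiliminal it contains a nonzero commutative hereditary sub-\ca{} isomorphic to $C_0(X)$; evaluation at any $x \in X$ supplies a one-dimensional irreducible representation whose extension to $A/I$ is GCR by the same argument, producing an elementary ideal $K \cong \Cpct(H)$ inside $A/I$. A brief computation using that $K$ is an ideal of $A/I$ then shows that any rank-one projection $q \in K$ satisfies $q(A/I)q \subseteq K$ and hence $q(A/I)q = qKq = \CC q$, i.e.\ is a minimal open projection of $A/I$, contradicting (1).

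I expect the main obstacle to be the careful statement and application of the hereditary extension theorem, in particular verifying the decomposition $\tilde\pi|_B = \pi_0 \oplus 0$ that makes finite-dimensionality of $\pi_0$ translate into compactness of $\tilde\pi(B)$ inside $\Bdd(H)$; everything else reduces to ideal-theoretic bookkeeping and the basic structural facts recalled in \autoref{pgr:scattered}.
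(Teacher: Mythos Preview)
Your overall scheme matches the paper's, and most steps coincide verbatim. The one substantive divergence is $(1)\Rightarrow(2)$: the paper argues directly without the extension theorem. From a nonzero abelian $b\in B:=A/I$ one has $\overline{bBb}$ commutative; choose an ideal $J\subseteq\overline{bBb}$ with $\overline{bBb}/J\cong\CC$, let $K\subseteq B$ be the ideal it generates, and use hereditariness to get $K\cap\overline{bBb}=J$, so the image of $\overline{bBb}$ in $B/K$ is one-dimensional and furnishes a minimal open projection there. This is lighter than routing through the extension theorem, though your approach has the virtue of reusing a single tool.

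However, your version of $(1)\Rightarrow(2)$ contains a genuine error. The irreducible GCR representation $\tilde\pi$ of $A/I$ that you build from evaluation at $x\in X$ need not be faithful, so $K:=\tilde\pi^{-1}(\Cpct(H))$ is \emph{not} an elementary ideal of $A/I$; only the ideal-quotient $K/\ker\tilde\pi$ is isomorphic to $\Cpct(H)$. Consequently there is no rank-one projection $q\in K\subseteq A/I$ to speak of, and the computation $q(A/I)q\subseteq K$ does not apply as written. The fix is easy: pass to the further quotient $(A/I)/\ker\tilde\pi$, which is still a quotient of $A$, and run your argument there with the genuine elementary ideal $K/\ker\tilde\pi$; a rank-one projection $q$ in it then satisfies $q\bigl((A/I)/\ker\tilde\pi\bigr)q\subseteq K/\ker\tilde\pi$, hence equals $\CC q$, contradicting~(1). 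But the sentence ``producing an elementary ideal $K\cong\Cpct(H)$ inside $A/I$'' is false as stated and must be corrected.
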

\begin{proof}
Let us first prove that~(1) implies~(2). Assume that there exists an ideal $I\subseteq A$ such that $A/I$ is not antiliminal.
Set $B:=A/I$ and choose a nonzero, abelian element $b\in B_+$.
This allows us to obtain an ideal $J\subseteq\overline{bBb}$ such that $\overline{bBb}/J\cong\CC$.
Let $K\subseteq B$ be the ideal generated by $J$.
Since $\overline{bBb}\subseteq B$ is hereditary, we have $K\cap\overline{bBb}=J$.
It follows that the image of $\overline{bBb}$ under the quotient map $B\to B/K$ is isomorphic to $\overline{bBb}/J$, and so $B/K$ contains a minimal open projection, which contradicts~(1), as required.

To show that~(2) implies~(3), let $I\subseteq J\subseteq A$ be ideals such that $J/I$ is nonzero and of type~$\mathrm{I}$.
Then $J/I$ contains a nonzero, abelian element, whence $A/I$ is not antiliminal.
The implications `(3)$\Rightarrow$(4)$\Rightarrow$(5)' follow using that every elementary \ca{} is scattered, and that every scattered \ca{} is type~$\mathrm{I}$.

Now assume that there exists a nonzero irreducible GCR representation $\pi\colon A\to\Bdd(H)$.
Then $\Cpct(H)\subseteq\pi(A)$ by \cite[Corollary~IV.1.2.5]{Bla06OpAlgs}.
Let $I$ be the kernel of $\pi$, and set $J:=\pi^{-1}(\Cpct(H))$.
Then $I\subseteq J\subseteq A$ are ideals such that $J/I\cong\Cpct(H)$. This proves  that~(5) implies~(6).

To see that~(6) implies~(7), let $B\subseteq A$ be a hereditary sub-\ca{}, and let $\pi_0\colon B\to\Bdd(H_0)$ be a nonzero, finite-dimensional irreducible representation.
Extend $\pi_0$ to an irreducible representation $\pi\colon A\to\Bdd(H)$ on some Hilbert space $H$ containing $H_0$, such that $H_0$ is invariant under $\pi(B)$ and such that $\pi(b)\xi=\pi_0(b)\xi$ for all $b\in B$ and $\xi\in H_0$;
see \cite[Proposition~II.6.4.11]{Bla06OpAlgs}.

Let $H_1$ be the essential subspace of $\pi|_B$, that is, the closed linear subspace of $H$ generated by $\pi(B)H$.
By \cite[Proposition~II.6.1.9]{Bla06OpAlgs}, the restriction of $\pi|_B$ to $H_1$ is irreducible.
Since $H_0$ is a closed subspace of $H_1$ that is invariant under $\pi(B)$, it follows that $H_0=H_1$.
Let $p\in\Bdd(H)$ denote the orthogonal projection onto $H_0$.
Then $p\pi(b)=\pi(b)$ and therefore $\pi(b)=\pi(b)p=p\pi(b)p$ for every $b\in B$.
Thus, $\pi(B)\subseteq\Cpct(H)$, and it follows that $\pi$ is GCR.

The implication `(7)$\Rightarrow$(8)' is clear. 
Finally, to see that~(8) implies~(1), let $I\subseteq A$ be an ideal and let $p\in A/I$ be a minimal open projection.
Let $\pi\colon A\to A/I$ be the quotient map.
Set $B:=\pi^{-1}(\CC p)$.
Then $B$ is a hereditary sub-\ca{} of $A$ that admits a nonzero, one-dimensional representation.
\end{proof}

\begin{exa}\label{exa:Sim}
A simple \ca{} is nowhere scattered if and only if it is not elementary.
In particular, a unital simple \ca{} is nowhere scattered if and only it is infinite-dimensional.
\end{exa}

A \ca{} $A$ is \emph{purely infinite} (in the sense of Kirchberg-R{\o}rdam, \cite[Definition~4.1]{KirRor00PureInf}) if and only if every element $x$ of its Cuntz semigroup  $\Cu (A)$, as defined in \autoref{pgr:Cu}, satisfies $2x=x$;
see \cite[Theorem~4.16]{KirRor00PureInf} and \cite[Proposition~7.2.8]{AntPerThi18TensorProdCu}.

More generally, $A$ is said to be \emph{weakly purely infinite} if there exists $n\in\NN$ such that every $x\in \Cu(A)$ satisfies $2nx=nx$;
see \cite[Definition~4.3]{KirRor02InfNonSimpleCalgAbsOInfty}.
Every purely infinite \ca{} is weakly purely infinite, and it is an open problem if the converse holds.

A \ca{} $A$ is \emph{traceless} if no algebraic ideal of $A$ admits a nonzero quasitrace;
see \cite[Definition~4.2]{KirRor02InfNonSimpleCalgAbsOInfty}.
Equivalently, every lower-semicontinuous $2$-quasitrace $A_+\to[0,\infty]$ (in the sense of \cite{EllRobSan11Cone}) takes only values in $\{0,\infty\}$;
see \cite[Remark~2.27(viii)]{BlaKir04PureInf}.
Using the correspondence between quasitraces on $A$ and functionals on $\Cu(A)$, this is also equivalent to the property that every $x\in\Cu(A)$ satisfies $2\widehat{x}=\widehat{x}$.
Consequently, every weakly purely infinite \ca{} is traceless;
see \cite[Theorem~4.8]{KirRor02InfNonSimpleCalgAbsOInfty}.

\begin{exa}
\label{exa:wpi}
Since every elementary \ca{} admits a nontrivial quasitrace, it follows that a traceless \ca{} cannot have nonzero elementary ideal-quotients. Using \autoref{prp:firstChar}, we deduce that every traceless \ca{} is nowhere scattered.
In particular, every (weakly) purely infinite \ca{} is nowhere scattered.
\end{exa}

\begin{prp}
\label{prp:VNA}
A von Neumann algebra is nowhere scattered if and only if its type~$\mathrm{I}$~summand is zero.
\end{prp}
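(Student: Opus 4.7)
The plan is to prove the two implications of \autoref{prp:VNA} separately, using the characterizations from \autoref{prp:firstChar}. The forward direction is essentially immediate: if $M$ is nowhere scattered and its type~$\mathrm{I}$ summand $M_I$ were nonzero, then the quotient of $M$ by the complementary central summand is $M_I$, a type~$\mathrm{I}$ von Neumann algebra containing an abelian projection. Such a projection is an abelian positive element, so $M_I$ is not antiliminal, contradicting \autoref{prp:firstChar}~(2).

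For the converse, suppose $M$ has no abelian projection. I would argue by contradiction using \autoref{prp:firstChar}~(1), assuming that some quotient $M/I$ contains a minimal open projection $\bar{p}$. First I would lift $\bar{p}$ to an honest projection $p\in M$ via Borel functional calculus: pick $b\in M_+$ with $\pi(b)=\bar{p}$ and set $p:=\chi_{[1/2,\infty)}(b)$. Since the bidual $\pi^{**}\colon M^{**}\to (M/I)^{**}$ is normal and respects Borel calculus, $\pi(p)=\bar{p}$. Then the corner $pMp$ is a nonzero von Neumann algebra still without abelian projections, and $\pi$ restricts to a character $\varphi\colon pMp\to\CC\bar{p}\cong\CC$.

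The core of the argument is to rule out such a character on a nonzero von Neumann algebra $N:=pMp$ with no abelian projection. Its type-decomposition has only type~$\mathrm{II}$ and type~$\mathrm{III}$ parts, both of which admit halving of every nonzero projection into Murray--von Neumann equivalent subprojections. Thus $p=e_1+e_2$ for nonzero $e_1,e_2\in N$ with a partial isometry $v\in N$ satisfying $v^*v=e_1$ and $vv^*=e_2$. Since $\varphi(e)=\varphi(e)^2\in\{0,1\}$ for every projection and $\varphi(e_1)+\varphi(e_2)=\varphi(p)=1$, one may assume $\varphi(e_1)=1$ and $\varphi(e_2)=0$. But then
\[
|\varphi(v)|^2=\varphi(v^*v)=\varphi(e_1)=1 \andSep |\varphi(v)|^2=\varphi(vv^*)=\varphi(e_2)=0,
\]
a contradiction. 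I expect the halving step to be the main obstacle, since it relies on the structure theory of von Neumann algebras (center-valued trace in the type~$\mathrm{II}$ part, proper infiniteness in the type~$\mathrm{III}$ part); once halving is granted, the incompatibility with the multiplicativity of $\varphi$ is completely elementary.
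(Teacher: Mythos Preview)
Your proof is essentially correct and close in spirit to the paper's, since both reduce the converse direction to the halving property of projections in a von Neumann algebra with no type~$\mathrm{I}$ summand (the paper cites \cite[Proposition~V.1.35]{Tak02ThyOpAlgs1} for this). There is, however, a genuine wrinkle in your lifting step: the canonical inclusion $M\hookrightarrow M^{**}$ is \emph{not} normal when $M$ is an infinite-dimensional von Neumann algebra, so Borel functional calculus of $b$ computed in $M$ need not coincide with that computed in $M^{**}$, and invoking $\pi^{**}$ does not immediately give $\pi(p)=\bar p$. The conclusion is still true: since $\pi(b)=\bar p$ is a projection we have $b-b^2\in I$, and from the elementary inequality $(\chi_{[1/2,\infty)}(t)-t)^2\le 4(t-t^2)^2$ on $[0,\|b\|]$ one gets $(p-b)^2\le 4(b-b^2)^2\in I$, hence $p-b\in I$. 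Alternatively, simply cite that von Neumann algebras have real rank zero, so projections lift from every quotient.

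As for the comparison: the paper argues via condition~(7) of \autoref{prp:firstChar} rather than~(1). Assuming a hereditary sub-\ca{} $B\subseteq M$ admits a finite-dimensional irreducible representation $\pi$, it picks a projection $p\in B$ with $\pi(p)\neq 0$ (using that $B$ inherits real rank zero, hence has an approximate unit of projections), then splits $p$ inside $pMp\subseteq B$ into $2^n$ pairwise orthogonal, equivalent subprojections; their images under $\pi$ are equivalent nonzero projections in a fixed matrix algebra, which is impossible for large $n$. This sidesteps the quotient/lifting manoeuvre entirely. Your route through condition~(1) is equally valid once the lift is justified, and your endgame---deriving a contradiction from a character via $\varphi(v^*v)=|\varphi(v)|^2=\varphi(vv^*)$---is a pleasant variant of the same rank obstruction.
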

\begin{proof}
Let $M$ be a von Neumann algebra. 
To show the forward implication, assume that $M$ is nowhere scattered.
If $p\in M$ is a nonzero abelian projection, then the hereditary sub-\ca{} $pMp$ is commutative and therefore admits a one-dimensional irreducible representation.
Thus, it follows from \autoref{prp:firstChar} that $M$ contains no nonzero abelian projections, and thus its type~$\mathrm{I}$~summand is zero.

To show the converse implication, assume that the type~$\mathrm{I}$~summand of $M$ is zero.
To reach a contradiction, assume that $B\subseteq M$ is a hereditary sub-\ca{} that admits a finite-dimensional, irreducible representation $\pi$;
see \autoref{prp:firstChar}.
Choose a projection $p\in B$ such that $\pi(p)\neq 0$.
Since $M$ has no type~$\mathrm{I}$~summand, for each $n\geq 1$ we can find $2^n$ pairwise orthogonal and equivalent projections whose sum is equal to~$p$;
see \cite[Proposition~V.1.35]{Tak02ThyOpAlgs1}.
This contradicts that $\pi(p)$ has nonzero, finite rank.
\end{proof}

\section{Permanence properties}
\label{sec:permanence}

In this section, we show that nowhere scatteredness enjoys many permanence properties.
In particular, it passes to hereditary sub-\ca{s} (hence, ideals), quotients, and inductive limits.

\begin{prp}
\label{prp:permHerQuot}
Let $A$ be a nowhere scattered \ca.
Then every quotient and every hereditary sub-\ca{} of $A$ is nowhere scattered.
\end{prp}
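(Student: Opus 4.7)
The plan is to leverage the equivalences in \autoref{prp:firstChar}, specifically the characterization that a \ca{} is nowhere scattered if and only if no hereditary sub-\ca{} admits a finite-dimensional irreducible representation (condition~(7), or equivalently condition~(8) using one-dimensional representations). With this reformulation, both permanence claims become structural observations about how hereditary sub-\ca{s} and representations interact with inclusions and quotients.

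For the hereditary sub-\ca{} case, I would first recall that hereditariness is transitive: if $C\subseteq A$ is hereditary and $B\subseteq C$ is hereditary, then $B$ is also hereditary in~$A$ (indeed, if $0\leq x\leq y$ in $A$ with $y\in B$, then $y\in C$ forces $x\in C$, and then $x\in B$). Hence a hereditary sub-\ca{} of a hereditary sub-\ca{} of $A$ admitting a finite-dimensional irreducible representation would be a hereditary sub-\ca{} of $A$ with the same property, contradicting nowhere scatteredness of~$A$ by \autoref{prp:firstChar}.

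For the quotient case, let $I\subseteq A$ be an ideal with quotient map $\pi\colon A\to A/I$, and suppose for contradiction that some hereditary sub-\ca{} $B\subseteq A/I$ admits a finite-dimensional irreducible representation $\rho\colon B\to\Bdd(H)$. The key step is to show that $\pi^{-1}(B)$ is a hereditary sub-\ca{} of $A$: if $0\leq x\leq y$ in $A$ with $y\in\pi^{-1}(B)$, then $0\leq\pi(x)\leq\pi(y)\in B$, and heredity of $B$ in $A/I$ yields $\pi(x)\in B$, so $x\in\pi^{-1}(B)$. Since $\pi$ is surjective and $I\subseteq\pi^{-1}(B)$, we have $\pi(\pi^{-1}(B))=B$, so the composition $\rho\circ\pi|_{\pi^{-1}(B)}\colon\pi^{-1}(B)\to\Bdd(H)$ has image $\rho(B)$, which acts irreducibly by hypothesis. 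This produces a hereditary sub-\ca{} of $A$ with a finite-dimensional irreducible representation, again contradicting \autoref{prp:firstChar}.

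The argument is short precisely because the heavy lifting has already been done in the characterization theorem; there is no real obstacle beyond verifying the two elementary facts that the preimage of a hereditary sub-\ca{} under a quotient map is hereditary, and that hereditariness is transitive. If one wished to avoid \autoref{prp:firstChar} entirely, the quotient case would also follow directly from the definition by noting that minimal open projections in $(A/I)/J$ for $J\subseteq A/I$ pull back to minimal open projections in $A/\pi^{-1}(J)$, but the route through condition~(7) is cleaner.
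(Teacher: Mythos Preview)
Your argument is correct and follows the same overall strategy as the paper: both invoke the characterizations in \autoref{prp:firstChar}. The only difference is which condition you pick for the quotient case. The paper observes that condition~(2) (every quotient is antiliminal) passes to quotients tautologically, since a quotient of $A/I$ is a quotient of $A$; you instead push condition~(7) through the quotient map by pulling back hereditary sub-\ca{s} and composing representations. Your route is slightly longer---it requires checking that $\pi^{-1}(B)$ is hereditary and that the composed representation remains irreducible---but it has the minor aesthetic advantage of using a single characterization uniformly. For the hereditary sub-\ca{} case the two proofs are identical.
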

\begin{proof}
This follows from \autoref{prp:firstChar}, since condition~(2) passes to quotients, and condition~(7) passes to hereditary sub-\ca{s}.
\end{proof}

\begin{prp}
\label{prp:permExt}
Let $A$ be a \ca, and let $I\subseteq A$ be an ideal.
Then $A$ is nowhere scattered if (and only if) $I$ and $A/I$ are nowhere scattered.
\end{prp}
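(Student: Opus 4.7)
The plan is to invoke \autoref{prp:firstChar}, specifically characterization~(3), which says that a \ca{} is nowhere scattered if and only if it has no nonzero type~$\mathrm{I}$ ideal-quotients. The forward implication of \autoref{prp:permExt} is already handled by \autoref{prp:permHerQuot}, so the task reduces to the converse: assuming $I$ and $A/I$ have no nonzero type~$\mathrm{I}$ ideal-quotients, show that $A$ has none either.

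To carry this out, I would argue by contradiction: suppose $K\subseteq J\subseteq A$ are ideals with $J/K$ nonzero and type~$\mathrm{I}$. The strategy is to cut $J/K$ along $I$ via the short exact sequence
\[
0\to \frac{J\cap I + K}{K}\to \frac{J}{K}\to \frac{J}{J\cap I + K}\to 0,
\]
and observe that both outer terms inherit the type~$\mathrm{I}$ property. The second isomorphism theorem gives $(J\cap I + K)/K\cong (J\cap I)/(K\cap I)$, which is an ideal-quotient of $I$ (both $J\cap I$ and $K\cap I$ are ideals of~$I$). A direct computation using the modular law (exploiting $K\subseteq J$) identifies $J/(J\cap I + K)\cong (J+I)/(K+I)$, which is an ideal-quotient of $A/I$.

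Now the dichotomy resolves things: if $(J\cap I + K)/K\neq 0$, then $I$ has a nonzero type~$\mathrm{I}$ ideal-quotient, contradicting the nowhere scatteredness of $I$; otherwise $J\cap I\subseteq K$, and then $J/K\cong (J+I)/(K+I)$ is a nonzero type~$\mathrm{I}$ ideal-quotient of $A/I$, contradicting the nowhere scatteredness of $A/I$. Either way we reach a contradiction, proving that $A$ has no nonzero type~$\mathrm{I}$ ideal-quotients, hence is nowhere scattered.

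The only mildly delicate point is verifying the isomorphism $J/(J\cap I + K)\cong (J+I)/(K+I)$, and in particular the identity $J\cap(K+I)=K+(J\cap I)$ which requires the hypothesis $K\subseteq J$. Apart from this bookkeeping, the argument is entirely formal, and I expect no genuine obstacle — the real content lies in having the ideal-quotient characterization~(3) of \autoref{prp:firstChar} available.
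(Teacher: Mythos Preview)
Your proposal is correct and follows essentially the same approach as the paper. The paper uses characterization~(4) of \autoref{prp:firstChar} (scattered ideal-quotients) rather than~(3) (type~$\mathrm{I}$), and argues directly that $J=K$ instead of by contradiction, but the core idea---cutting the given ideal-quotient along $I$ to produce an ideal-quotient of $I$ (namely $(J\cap I)/(K\cap I)$) and one of $A/I$ (namely $(J+I)/(K+I)$)---is identical; your short exact sequence is just a clean packaging of the same decomposition.
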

\begin{proof}
Assume that $I$ and $A/I$ are nowhere scattered.
To verify condition~(4) of \autoref{prp:firstChar}, let $J\subseteq K\subseteq A$ be ideals such that $K/J$ is scattered.
Then $I\cap J \subseteq I\cap K\subseteq I$ are ideals such that $(I\cap K)/(I\cap J)$ is isomorphic to an ideal of $K/J$ and is therefore scattered.
Since $I$ is nowhere scattered, we get $I\cap J=I\cap K$.

Similarly, $J/I\cap J\subseteq K/I\cap K\subseteq A/I$ are ideals such that $(J/I\cap J)/(K/I\cap K)$ is isomorphic to a quotient of $J/K$ and therefore scattered.
Since $A/I$ is nowhere scattered, we get $J/I\cap J=K/I\cap K$.
It follows that $J=K$.
\end{proof}

A \ca{} is scattered if and only if each of its sub-\ca{s} has real rank zero; see \autoref{pgr:scattered}.
It follows that every sub-\ca{} of a scattered \ca{} is again scattered. 
The analog for nowhere scatteredness does not hold:
Take, for example, $\CC\subseteq A$ in any unital, nowhere scattered \ca{} $A$.

Conversely, given a full sub-\ca{} $B\subseteq A$, it is natural to ask if $A$ is nowhere scattered whenever $B$ is.
Without extra assumptions, this fails:
Consider for example a type~$\mathrm{II}$ factor $M\subseteq\Bdd(H)$.
By \autoref{prp:VNA}, $M$ is nowhere scattered, but $\Bdd(H)$ contains an elementary ideal and therefore is \emph{not} nowhere scattered.
The `right' additional condition is for $B$ to \emph{separate the ideals} of $A$, that is, two ideals $I,J\subseteq A$ satisfy $I=J$ whenever $I\cap B=J\cap B$.

\begin{prp}
\label{prp:separatingIdeals}
Let $A$ be \ca, and let $B\subseteq A$ be a nowhere scattered sub-\ca{} that separates the ideals of $A$.
Then $A$ is nowhere scattered.
\end{prp}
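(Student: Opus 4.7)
The plan is to prove the contrapositive via the ideal-quotient characterizations of \autoref{prp:firstChar}. Assume that $A$ is not nowhere scattered; by condition~(5), $A$ admits a nonzero elementary ideal-quotient, so there exist ideals $I\subseteq J\subseteq A$ with $I\neq J$ and $J/I\cong\Cpct(H)$ for some Hilbert space~$H$. I will manufacture from this a nonzero scattered ideal-quotient of $B$, which contradicts that $B$ is nowhere scattered (via condition~(4) of \autoref{prp:firstChar}).

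The ideals $I\cap B$ and $J\cap B$ of $B$ satisfy $I\cap B\subseteq J\cap B$, and since $B$ separates the ideals of $A$, the hypothesis $I\neq J$ forces $I\cap B\neq J\cap B$. The composition $J\cap B\hookrightarrow J\to J/I$ has kernel $(J\cap B)\cap I = I\cap B$, so it induces an injective $*$-homomorphism
\[
(J\cap B)/(I\cap B)\hookrightarrow J/I\cong\Cpct(H).
\]
Thus $(J\cap B)/(I\cap B)$ is a nonzero sub-\ca{} of the scattered \ca{} $\Cpct(H)$. Since scatteredness passes to sub-\ca{s} (see~(8) or~(12) of \autoref{pgr:scattered}), it follows that $(J\cap B)/(I\cap B)$ is a nonzero scattered ideal-quotient of $B$, giving the desired contradiction.

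The argument is short because the main work is packaged into two inputs: the ideal-quotient characterization~(4)--(5) of \autoref{prp:firstChar}, and the standard fact that scatteredness is hereditary for sub-\ca{s}. The only point requiring care is ensuring that the intersected ideals give a \emph{nonzero} ideal-quotient of $B$, and this is precisely where the separation-of-ideals hypothesis is used; without it, one could imagine $B$ meeting $J$ and $I$ in the same ideal even though $J/I\neq 0$, and the construction would collapse.
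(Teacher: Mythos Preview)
Your proof is correct and follows essentially the same approach as the paper's: both show that for ideals $I\subseteq J\subseteq A$, the ideal-quotient $(J\cap B)/(I\cap B)$ of $B$ embeds into $J/I$ and is therefore scattered whenever $J/I$ is, and then invoke separation of ideals to link the vanishing of one with the vanishing of the other. The only cosmetic differences are that you argue by contrapositive and start from an elementary (rather than merely scattered) ideal-quotient, neither of which changes the substance.
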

\begin{proof}
Let $I\subseteq J\subseteq A$ be ideals with $J/I$ scattered. In order to prove condition~(4) in \autoref{prp:firstChar}, we show that this ideal-quotient is zero. 

Note that $(J\cap B)/(I\cap B)$ is scattered since it is isomorphic to a subalgebra of $J/I$.
Since $B$ is nowhere scattered, it follows that $J\cap B=I\cap B$, and thus $J=I$, as desired.
\end{proof}

\begin{exa}
Consider an action of an exact, discrete group $G$ on a nowhere scattered \ca{} $A$. 
Assume that the induced action $G\curvearrowright \widehat{A}$ is essentially free.
Then, by \cite[Theorem~1.20]{Sie10IdealsRedCrProd}, $A$ separates the ideals of the reduced crossed product $A\rtimes_{\mathrm{red}} G$.
Hence, $A\rtimes_{\mathrm{red}} G$ is nowhere scattered by \autoref{prp:separatingIdeals}.
\end{exa}

Given a \ca{} $A$, recall that a family $(A_\lambda)_{\lambda\in\Lambda}$ of sub-\ca{s} $A_\lambda\subseteq A$ is said to \emph{approximate $A$} if for every finite subset $\{a_1,\ldots,a_n\}\subseteq A$ and $\varepsilon>0$ there exists $\lambda\in\Lambda$ and $b_1,\ldots,b_n\in A_\lambda$ such that $\|a_k-b_k\|<\varepsilon$ for $k=1,\ldots,n$.

\begin{prp}
\label{prp:permApprox}
Let $A$ be \ca, and let $(A_\lambda)_{\lambda\in\Lambda}$ be a family of nowhere scattered sub-\ca{s} of $A$ that approximates $A$.
Then $A$ is nowhere scattered.
\end{prp}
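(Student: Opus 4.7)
The plan is to verify condition~(4) of \autoref{prp:firstChar}: that $A$ has no nonzero scattered ideal-quotient. Suppose, toward a contradiction, that there are ideals $I \subseteq K \subseteq A$ with $L := K/I$ a nonzero scattered ideal of $A/I$. The first step is to transfer the problem to $A/I$: letting $\pi\colon A \to A/I$ denote the quotient map, set $B_\lambda := \pi(A_\lambda)$. Each $B_\lambda$, being a quotient of $A_\lambda$, is nowhere scattered by \autoref{prp:permHerQuot}; and the family $(B_\lambda)_\lambda$ approximates $A/I$ because $\pi$ is contractive and surjective (lift tuples of elements to $A$ and approximate there).

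The crucial algebraic step is to show $B_\lambda \cap L = 0$ for every $\lambda$. The intersection is a two-sided ideal of $B_\lambda$ (since $L$ is an ideal of $A/I$) and simultaneously a sub-\ca{} of the scattered algebra $L$. Condition~(8) of \autoref{pgr:scattered}---scatteredness is equivalent to every sub-\ca{} having real rank zero---is manifestly inherited by sub-\ca{s}, so $B_\lambda \cap L$ is itself scattered. As a scattered ideal of the nowhere scattered algebra $B_\lambda$, condition~(4) of \autoref{prp:firstChar} forces $B_\lambda \cap L = 0$.

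Finally I would rule out $L \ne 0$. Pick a nonzero $l \in L_+$. For each $\delta > 0$, approximate $l^{1/2} \in A/I$ by some $c \in B_\lambda$ within a tolerance $\eta$ small enough that $\|l - c^*c\| \le \eta(2\|l^{1/2}\| + \eta) < \delta$; then $b := c^*c \in (B_\lambda)_+$ satisfies $\|l - b\| < \delta$. R\o{}rdam's approximation lemma yields $(b-\delta)_+ \precsim l$ in $A/I$. Since $L$ is an ideal containing $l$, the element $(b-\delta)_+$, being a norm limit of elements of the form $d_n l d_n^*$, lies in $L$. On the other hand $(b-\delta)_+ \in B_\lambda$ by functional calculus, so $(b-\delta)_+ \in B_\lambda \cap L = 0$. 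Hence $\|b\| \le \delta$, giving $\|l\| \le \|l - b\| + \|b\| < 2\delta$. Letting $\delta \to 0$ forces $l = 0$, a contradiction.

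The main obstacle lies in the last step: one must simultaneously confine the truncation $(b-\delta)_+$ to the sub-\ca{} $B_\lambda$ (handled by functional calculus) and to the ideal $L$ (handled by observing $(b-\delta)_+ \precsim l$ together with closedness of $L$ under Cuntz subequivalence). Producing a positive approximant $b \in B_\lambda$ via the $c \mapsto c^*c$ device is routine, as is the verification that $(B_\lambda)_\lambda$ approximates the quotient $A/I$.
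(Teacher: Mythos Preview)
Your proof is correct and follows essentially the same strategy as the paper's: both verify condition~(4) of \autoref{prp:firstChar} by observing that the intersection of each approximating subalgebra with the scattered ideal-quotient is itself scattered and hence zero, then use the cut-down $(b-\delta)_+$ together with Cuntz subequivalence (via R{\o}rdam's lemma) to force the scattered piece to vanish. The only cosmetic difference is that you first pass to the quotient $A/I$ and work with $B_\lambda=\pi(A_\lambda)$ there, whereas the paper stays in $A$ and shows $A_\lambda\cap J=A_\lambda\cap I$ directly; the two formulations are equivalent.
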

\begin{proof}
As above, let us prove condition~(4) of \autoref{prp:firstChar}. Take $I\subseteq J\subseteq A$ ideals such that $J/I$ is scattered.
For each $\lambda$, the ideal-quotient $(A_\lambda\cap J)/(A_\lambda\cap I)$ of $A_\lambda$ is isomorphic to a subalgebra of $J/I$ and therefore scattered.
Since $A_\lambda$ is nowhere scattered, we get $A_\lambda\cap J=A_\lambda\cap I$.

To verify that $J=I$, let $a\in J_+$.
Given $\varepsilon>0$, use that $(A_\lambda)_{\lambda\in\Lambda}$ approximates~$A$ and a functional calculus argument to obtain $\lambda$ and a positive element $b\in A_\lambda$ such that $\|a-b\|<\varepsilon$.
Consider the $\varepsilon$-cut-down $(b-\varepsilon)_+$ as in \autoref{pgr:Cu}.
Then there exists $r\in A$ such that $(b-\varepsilon)_+ = rar^*$;
see, for example, \cite[Lemma~2.29]{Thi17:CuLectureNotes}.
Since $J$ is an ideal, and since $A_\lambda$ is closed under functional calculus, we deduce that $(b-\varepsilon)_+$ belongs to $A_\lambda\cap J$, and thus to $I$.
We have $\|a-(b-\varepsilon)_+\|<2\varepsilon$, and since $\varepsilon>0$ was arbitary, we deduce that $a\in I$, as desired.
\end{proof}

\begin{prp}
\label{prp:permLimit}
An inductive limit of nowhere scattered \ca{s} is nowhere scattered.
\end{prp}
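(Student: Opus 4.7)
The plan is to reduce this to the already proved \autoref{prp:permApprox} together with \autoref{prp:permHerQuot}. Given an inductive limit $A = \varinjlim_\lambda A_\lambda$ with canonical maps $\varphi_\lambda \colon A_\lambda \to A$, the defining property of the inductive limit is that $\bigcup_\lambda \varphi_\lambda(A_\lambda)$ is dense in $A$. Setting $B_\lambda := \varphi_\lambda(A_\lambda)$, this density immediately yields that the family $(B_\lambda)_\lambda$ of sub-\ca{s} of $A$ approximates $A$ in the sense defined before \autoref{prp:permApprox}, since any finite tuple in $A$ can be simultaneously approximated by elements coming from a single $A_\lambda$ (using that the system is directed).

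The second ingredient is that each $B_\lambda$ is itself nowhere scattered. Indeed, $B_\lambda$ is the image of a $*$-homomorphism from the nowhere scattered \ca{} $A_\lambda$, hence isomorphic to $A_\lambda/\ker(\varphi_\lambda)$, a quotient of $A_\lambda$. By \autoref{prp:permHerQuot}, nowhere scatteredness passes to quotients, so $B_\lambda$ is nowhere scattered.

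Putting the two observations together, the family $(B_\lambda)_\lambda$ is a family of nowhere scattered sub-\ca{s} of $A$ that approximates $A$, and \autoref{prp:permApprox} then concludes that $A$ is nowhere scattered. There is essentially no obstacle here; the only point to watch is using directedness of the index set to justify that the images $\varphi_\lambda(A_\lambda)$, rather than their union (which is only a $*$-subalgebra, not a sub-\ca), form an approximating family in the precise sense required by \autoref{prp:permApprox}.
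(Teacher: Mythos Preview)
Your proposal is correct and is essentially identical to the paper's own proof: both define $B_\lambda:=\varphi_\lambda(A_\lambda)$, observe that each $B_\lambda$ is a quotient of $A_\lambda$ and hence nowhere scattered by \autoref{prp:permHerQuot}, note that $(B_\lambda)_\lambda$ approximates $A$, and conclude via \autoref{prp:permApprox}.
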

\begin{proof}
Let $\Lambda$ be a directed set, let $(A_\lambda)_{\lambda\in\Lambda}$ be a family of nowhere scattered \ca{s}, and let $\varphi_{\kappa,\lambda}\colon A_\lambda\to A_\kappa$ be coherent connecting morphisms for $\lambda\leq\kappa$ in~$\Lambda$.
The inductive limit of this system is given by a \ca{} $A$ together with morphisms into the limit $\varphi_{\lambda}\colon A_\lambda\to A$.
For each $\lambda$, set $B_\lambda:=\varphi_\lambda(A_\lambda)\subseteq A$.
Then $B_\lambda$ is a quotient of $A_\lambda$, and therefore is nowhere scattered by \autoref{prp:permHerQuot}.
It follows from standard properties of the inductive limit that $(B_\lambda)_\lambda$ approximates $A$.
Hence, $A$ is nowhere scattered by \autoref{prp:permApprox}.
\end{proof}

\begin{lma}
\label{prp:CharNoOneDimRepr}
Let $A$ be a \ca{}, and let $a\in A_+$.
Then $\overline{aAa}$ has no one-dimensional irreducible representations if and only if there exists a countable subset $G\subseteq A$ such that $(axa)^2=0$ for each $x\in G$, and such that $a\in C^*(\{axa:x\in G\})$.
\end{lma}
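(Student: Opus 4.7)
\emph{$(\Leftarrow)$.} Assume such a countable $G$ exists and suppose, for contradiction, that $\varphi\colon\overline{aAa}\to\CC$ is a nonzero $*$-homomorphism. Multiplicativity gives $\varphi(axa)^{2}=\varphi((axa)^{2})=0$, so $\varphi(axa)=0$ for each $x\in G$; continuity of $\varphi$ extends this vanishing to $C^{*}(\{axa:x\in G\})$, and the hypothesis $a\in C^{*}(\{axa:x\in G\})$ forces $\varphi(a)=0$. Viewing $\varphi$ as a pure state and applying the Cauchy-Schwarz inequality $|\varphi(ab)|^{2}\le\varphi(a^{2})\varphi(b^{*}b)$ (which is $0$ since $\varphi(a^{2})=\varphi(a)^{2}=0$) yields $\varphi(ab)=\varphi(ba)=0$ for every $b\in\overline{aAa}$. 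Therefore $\varphi$ vanishes on $a\cdot\overline{aAa}\cdot a$ and, by density, on $\overline{a\cdot\overline{aAa}\cdot a}=\overline{aAa}$ (the hereditary sub-\ca{} of $A$ generated by $a$ coincides with the one generated by $a^{2}$), contradicting $\varphi\neq 0$.

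\emph{$(\Rightarrow)$.} Set $B:=\overline{aAa}$ and $D:=C^{*}(\{axa:x\in A,\ (axa)^{2}=0\})\subseteq B$. My strategy is first to prove the qualitative fact $a\in D$, and then to extract a countable $G$: writing $a$ as a norm-limit of $*$-polynomials in finitely many generators $ax_{n,1}a,\ldots,ax_{n,k_{n}}a\in D$, the countable union $G:=\bigcup_{n\ge 1}\{x_{n,1},\ldots,x_{n,k_{n}}\}\subseteq A$ has the required properties. To prove $a\in D$ it suffices to show $(a-\varepsilon)_{+}\in D$ for every $\varepsilon>0$ and then use norm-closedness of $D$. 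Two observations drive this. First, for any $y\in B$ with $y^{2}=0$, the polar decomposition $y=v|y|$ in $B^{**}$ forces $v^{2}=0$ and $|y|v=0$; setting $g_{n}(t):=(t-1/n)_{+}/t$, the cut-downs $y_{n}:=yg_{n}(|y|)=v(|y|-1/n)_{+}$ satisfy $y_{n}^{2}=0$ and $y_{n}\to y$ in norm, with spectral support bounded below by $1/n$ in $\sigma(|y|)$. Using that $|y|\in B=\overline{aAa}$, a functional-calculus inversion of $a$ on the corresponding spectral subspace realises each $y_{n}$ exactly as $ax_{n}a$ for some $x_{n}\in A$. Second, the hypothesis that $B$ has no character is equivalent, via a standard argument, to the closed two-sided ideal of $B$ generated by its order-two nilpotents being $B$ itself; combined with the first observation, this allows expressing $(a-\varepsilon)_{+}$ as a norm-limit of $*$-polynomials in generators $ax_{n}a$ with $(ax_{n}a)^{2}=0$.

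\emph{Main obstacle.} The technically delicate step is to bridge the ideal-theoretic assertion ``$B$ is generated as a two-sided ideal by the relevant nilpotents'' and the desired subalgebra-theoretic statement $a\in D$. Concretely, one must reabsorb the multipliers that appear when expressing elements of the ideal (products of the form $b\cdot axa\cdot c$) into generators of the same form, while preserving the nilpotency condition $(axa)^{2}=0$. This is accomplished by exploiting the hereditary structure $B=\overline{aAa}$: approximating $b,c$ by elements of $aAa$ and using the spectral-subspace inversion of $a$ allows such products to be absorbed into $a\cdot(\cdot)\cdot a$. Carrying out this reabsorption carefully, without destroying either the norm-approximation of $a$ or the nilpotency of the $axa$-generators, is the technical heart of the argument.
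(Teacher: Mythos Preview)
Your backward implication is fine and matches the paper's argument (the paper is slightly terser, simply noting that $a$ is strictly positive in $\overline{aAa}$, so $\varphi(a)=0$ forces $\varphi=0$).

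For the forward implication, however, there is a genuine gap---and you have identified it yourself. You only argue that the closed two-sided \emph{ideal} of $B$ generated by its square-zero elements is all of $B$, and then concede that the passage from ``ideal generated by'' to ``$C^*$-subalgebra generated by'' is the ``technical heart of the argument,'' without actually executing it. Your proposed reabsorption of the multipliers $b,c$ in expressions $b\cdot axa\cdot c$ back into the form $a(\cdot)a$ while preserving nilpotency is not carried out, and it is not clear that it can be done directly along the lines you sketch.

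The paper avoids this obstacle entirely by invoking two results of Robert on Lie ideals: the $C^*$-subalgebra generated by the commutator space $L$ already equals the ideal it generates, and the closure of $L$ coincides with the closed linear span of $N_2=\{y:y^2=0\}$. Together these give $B=C^*(N_2)$ as a \emph{sub-$C^*$-algebra}, not merely as an ideal, so the bridging step you flag simply does not arise.

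There is a second, smaller issue in your ``first observation.'' You write $y_n=v(|y|-1/n)_+$ and assert that ``a functional-calculus inversion of $a$ on the corresponding spectral subspace'' yields $y_n=ax_na$ exactly. But the spectral subspace you describe is for $|y|$, not for $a$; knowing that $|y|$ is bounded below on this subspace tells you nothing about invertibility of $a$ there. The paper instead compresses $y$ by $f_m(a)$ (so that $a$ is genuinely invertible on the relevant hereditary subalgebra), accepts that $f_m(a)yf_m(a)$ is only approximately square-zero, and then invokes the stability of the relation $z^2=0$ (via Shulman's projectivity theorem for nilpotent contractions) to perturb back to an exact square-zero element inside $\overline{f_m(a)Af_m(a)}$. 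That projectivity/stability ingredient is what makes the exact realization $z=axa$ work, and it is missing from your sketch.
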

\begin{proof}
Without loss of generality, we may assume that $a$ is strictly positive and thus $A=\overline{aAa}$.
To show the backward implication, assume that $G$ has the stated properties, and let $\pi\colon A\to\CC$ be a one-dimensional representation.
Given $x\in G$, we have $\pi(axa)^2=\pi((axa)^2)=0$, and therefore $\pi(axa)=0$.
It follows that $\pi(a)=0$, and thus $\pi=0$, as desired.

To show the forward implication, assume that $A$ has no one-dimensional irreducible representations.
Set $N_2:=\{y\in A : y^2=0\}$.
An additive commutator in~$A$ is an element of the form $cd-dc$ for some $c,d\in A$.
Let $L$ denote the linear span of the additive commutators in~$A$.
Then, $A$ is generated by $L$ as a (closed, two-sided) ideal.
By \cite[Theorem~1.3]{Rob16LieIdeals}, the \ca{} generated by $L$ agrees with the (closed, two-sided) ideal generated by~$L$.
Hence, $A=C^*(L)$.
By \cite[Corollary~2.3]{Rob16LieIdeals}, the closure of~$L$ agrees with the closed, linear span of $N_2$, which implies that $A=C^*(N_2)$.
This allows us to choose a countable subset $F\subseteq N_2$ such that $a\in C^*(F)$.
The statement now follows from the following claim:

\textbf{Claim:} \emph{Let $y\in N_2$ and $\gamma>0$.
Then there exists $x\in A$ such that $axa\in N_2$ and $\|y-axa\|<\gamma$.}

We may assume that $\|y\|\leq 1$.
By \cite[Theorem~5]{Shu08LiftingNilpotent}, the universal \ca{} generated by a contractive, square-zero element is projective.
This implies that the corresponding relations are stable (see for example \cite[Theorem~14.1.4]{Lor97LiftingSolutions}), that is, for every $\varepsilon>0$ there exists $\delta=\delta(\varepsilon)>0$ such that if $z$ is an element in a \ca{} satisfying $\|z\|\leq 1$ and $\|z^2\|\leq\delta$, then there exists a contractive, square-zero element $z'$ with $\|z-z'\|<\varepsilon$.

For each $n\geq 1$, let $f_n\colon\mathbb{R}\to[0,1]$ be a continuous function that takes the value $0$ on $(-\infty,\tfrac{1}{n}]$ and that takes the value $1$ on $[\tfrac{2}{n},\infty)$.
Then $(f_n(a))_n$ is an approximate unit for $A$.
Thus, $\lim_{n\to\infty}\|(f_n(a)yf_n(a))^2\|=0$.
Thus, we can find $m\geq 1$ such that
\[
\|(f_m(a)yf_m(a))^2\|<\delta(\tfrac{\gamma}{2}), \andSep
\|y-f_m(a)yf_m(a)\|<\tfrac{\gamma}{2}.
\]

We view $f_m(a)yf_m(a)$ as an element of the sub-\ca{} $\overline{f_m(a)Af_m(a)}$.
Using also that $f_m(a)yf_m(a)$ is contractive, we obtain $z\in\overline{f_m(a)Af_m(a)}$ such that $z^2=0$ and $\|z-f_m(a)yf_m(a)\|<\tfrac{\gamma}{2}$.
Then $\|y-z\|<\gamma$.

It remains to show that $z=axa$ for some $x\in A$.
Note that $f_{2m}(a)$ acts as a unit on $\overline{f_m(a)Af_m(a)}$.
Using functional calculus, choose $b\in A$ such that $f_{2m}(a)=ab=ba$.
We obtain
\[
z 
= f_{2m}(a)zf_{2m}(a)
= abzba,
\]
and thus $x:=bzb$ has the claimed properties.
\end{proof}

\begin{prp}
\label{prp:NWS-PreLocal}
Let $A$ be a \ca.
Then the following are equivalent:
\begin{enumerate}
\item
$A$ is nowhere scattered;
\item
for every $a\in A_+$ there exists $b\in A_+$ with $\|a-b\|<\tfrac{1}{2}$ and such that $\overline{(b-\tfrac{1}{2})_+A(b-\tfrac{1}{2})_+}$ has no one-dimensional irreducible representations;
\item
for every $a\in A_+$ there exist $b\in A_+$ with $\|a-b\|<\tfrac{1}{2}$ and a countable subset $G\subseteq A$ such that $((b-\tfrac{1}{2})_+x(b-\tfrac{1}{2})_+)^2=0$ for each $x\in G$ and such that $(b-\tfrac{1}{2})_+\in C^*(\{(b-\tfrac{1}{2})_+x(b-\tfrac{1}{2})_+:x\in G\})$.
\end{enumerate}
\end{prp}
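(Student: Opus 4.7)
The plan is to prove $(1) \Rightarrow (2)$, $(2) \Leftrightarrow (3)$ and $(2) \Rightarrow (1)$; the first two implications are essentially free, while $(2) \Rightarrow (1)$ carries all of the work. For $(1) \Rightarrow (2)$, given $a \in A_+$ I would simply take $b := a$: the algebra $\overline{(a-\tfrac{1}{2})_+ A (a-\tfrac{1}{2})_+}$ is hereditary in $A$, so by condition~(8) of \autoref{prp:firstChar} it admits no one-dimensional irreducible representation (the case $(a-\tfrac{1}{2})_+ = 0$ being vacuous). The equivalence $(2) \Leftrightarrow (3)$ follows at once from \autoref{prp:CharNoOneDimRepr} applied to the positive element $(b-\tfrac{1}{2})_+ \in A_+$, which translates the absence of one-dimensional irreducible representations on the hereditary sub-\ca{} it generates into the existence of the countable set $G$ described in $(3)$.

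For $(2) \Rightarrow (1)$ I argue by contrapositive. Assume $A$ is not nowhere scattered, so by condition~(8) of \autoref{prp:firstChar} there is a hereditary sub-\ca{} $B \subseteq A$ admitting a non-zero character $\chi \colon B \to \CC$; using positivity of $\chi$ and rescaling, pick $c \in B_+$ with $\chi(c) = 1$. Extending the one-dimensional representation $\chi$ of $B$ to an irreducible representation of $A$ via \cite[Proposition~II.6.4.11]{Bla06OpAlgs} and running the essential-subspace argument from the proof of \autoref{prp:firstChar}, I obtain an irreducible representation $\pi \colon A \to \Bdd(H)$ and a rank-one projection $p \in \Bdd(H)$ such that $\pi(b) = \chi(b) p$ for every $b \in B$; in particular $\pi(c) = p$. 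The goal is then to show that no $b \in A_+$ with $\|c - b\| < \tfrac{1}{2}$ satisfies the condition in $(2)$.

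Fix such a $b$ and set $r := \|\pi(b) - p\| \leq \|b - c\| < \tfrac{1}{2}$. Since $\mathrm{spec}(p) = \{0,1\}$ and $\pi(b) \geq 0$, continuity of the spectrum under perturbations yields $\mathrm{spec}(\pi(b)) \subseteq [0, r] \cup [1-r, 1+r]$, so $q := 1_{[1/2,\infty)}(\pi(b))$ is a well-defined spectral projection of $\pi(b)$, and $(\pi(b) - \tfrac{1}{2})_+$ is supported on $qH$. A direct positivity argument shows that $q$ has rank one: $q \neq 0$ because $\|\pi(b)\| \geq 1 - r > r$ forces $\mathrm{spec}(\pi(b))$ to meet $[1-r, 1+r]$, and any $\xi \in qH$ with $p\xi = 0$ satisfies both $\langle \pi(b)\xi, \xi \rangle \geq (1-r)\|\xi\|^2$ (by spectral calculus on $qH$) and $\langle \pi(b)\xi, \xi \rangle = \langle (\pi(b)-p)\xi, \xi \rangle \leq r\|\xi\|^2$, forcing $\xi = 0$ and hence injectivity of $pq \colon qH \to pH$. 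Consequently $\pi((b-\tfrac{1}{2})_+) \in q\Bdd(H)q = \CC q$ and is a positive, non-zero multiple of $q$, so $\pi$ restricts to a non-zero $*$-homomorphism $\overline{(b-\tfrac{1}{2})_+ A (b-\tfrac{1}{2})_+} \to \CC q \cong \CC$, a one-dimensional irreducible representation, contradicting the choice of $b$.

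The main obstacle I anticipate is proving that $q$ has rank one under the loose hypothesis $r < \tfrac{1}{2}$: a naive Lipschitz bound on $\|p - q\|$ from continuous functional calculus only yields $\|p - q\| < 1$ (and hence Murray--von Neumann equivalence of $p$ and $q$) when $r < \tfrac{1}{3}$, so one must bypass this estimate via the direct positivity computation above.
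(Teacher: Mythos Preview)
Your proof is correct and your contrapositive argument for $(2)\Rightarrow(1)$ follows a genuinely different path from the paper's.

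The paper also starts from a hereditary sub-\ca{} $B$ with a character, extends it to an irreducible representation $\tilde{\pi}$ of $A$, and picks $a\in B_+$ with $\pi(a)=1$. But instead of doing spectral analysis on $\tilde{\pi}(b)$, the paper stays inside the \ca{} and uses Cuntz comparison: from $\|a-b\|<\tfrac{1}{2}$ it invokes \cite[Lemma~2.2]{KirRor02InfNonSimpleCalgAbsOInfty} to write $(b-\tfrac{1}{2})_+=yay^*$, sets $x:=ya^{1/2}$, and observes $(b-\tfrac{1}{2})_+=xx^*$ with $x^*x\in\overline{aAa}\subseteq B$. Since $\overline{x^*Ax}\cong\overline{xAx^*}=\overline{(b-\tfrac{1}{2})_+A(b-\tfrac{1}{2})_+}$ has no one-dimensional representations, the character $\pi$ must vanish on $\overline{x^*Ax}$, hence $\tilde{\pi}(x^*x)=0$, hence $\tilde{\pi}((b-\tfrac{1}{2})_+)=\tilde{\pi}(xx^*)=0$. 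This contradicts $\|a-(b-\tfrac{1}{2})_+\|<1$ together with $\|\tilde{\pi}(a)\|=1$.

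Your route is more elementary in that it avoids the R{\o}rdam--Kirchberg subequivalence lemma and the isomorphism of left/right hereditary subalgebras; the spectral-gap argument and the rank computation for $q$ use only basic operator theory. The paper's route, on the other hand, is cleaner once that lemma is in hand and connects naturally with the Cuntz-semigroup viewpoint developed later in the paper. Your anticipated obstacle is handled correctly: the direct positivity estimate $(1-r)\|\xi\|^2\leq r\|\xi\|^2$ on $qH\cap\ker p$ works for all $r<\tfrac{1}{2}$, bypassing the crude $\|p-q\|$ bound.
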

\begin{proof}
If $A$ is nowhere scattered, then by \autoref{prp:firstChar} every hereditary sub-\ca{} of $A$ has no one-dimensional irreducible representations.
This shows that~(1) implies~(2).
The equivalence between~(2) and~(3) follows from \autoref{prp:CharNoOneDimRepr}.

It remains to show that~(2) implies~(1). 
Thus, assume that~(2) holds, and note that it suffices to verify statement~(8) of \autoref{prp:firstChar}.
So let $B\subseteq A$ be a hereditary sub-\ca{}.
To reach a contradiction, let $\pi\colon B\to\CC$ be a one-dimensional irreducible representation.
Extend $\pi$ to an irreducible representation $\tilde{\pi}$ of $A$ on some Hilbert space $H$, such that there is a one-dimensional subspace $H_0\subseteq H$ that is invariant under $\tilde{\pi}(B)$, and such that $\tilde{\pi}|_B$ agrees with $\pi$ on $H_0$.

Choose $a\in B_+$ with $\pi(a)=1$.
By assumption, we obtain $b\in A_+$ such that $\|a-b\|<\tfrac{1}{2}$, and such that $\overline{(b-\tfrac{1}{2})_+A(b-\tfrac{1}{2})_+}$ has no one-dimensional irreducible representations.
By \cite[Lemma~2.2]{KirRor02InfNonSimpleCalgAbsOInfty}, there exists $y\in A$ such that $(b-\tfrac{1}{2})_+=yay^*$.
Set $x:=ya^{1/2}$.
Then 
\[
(b-\tfrac{1}{2})_+ = xx^*, \andSep
x^*x\in\overline{aAa}\subseteq B.
\]

This implies that $\overline{x^*Ax}$ is isomorphic (as a \ca{}) to $\overline{xAx^*}$.
We have $\overline{xAx^*}=\overline{(b-\tfrac{1}{2})_+A(b-\tfrac{1}{2})_+}$, which does not have one-dimensional irreducible representations.
It follows that $\overline{x^*Ax}$ has no one-dimensional irreducible representations, and thus $\pi$ vanishes on $\overline{x^*Ax}$.
Hence, $\pi(x^*x)=0$, and $\tilde{\pi}(x^*x)=0$.
This implies $\tilde{\pi}(x)=0$, and so
\[
\tilde{\pi}\big( (b-\tfrac{1}{2})_+ \big)
= \tilde{\pi}(xx^*)
= 0.
\]

But
\[
\| a - (b-\tfrac{1}{2})_+ \|
\leq \| a-b \| + \| b - (b-\tfrac{1}{2})_+\|
< 1,
\]
and thus $\|\pi(a)\| = \|\tilde{\pi}(a)\|<1$, a contradiction.
\end{proof}

\begin{rmk}
In condition~(2) of \autoref{prp:NWS-PreLocal}, it is not enough to require $\overline{bAb}$ to have no one-dimensional irreducible representations.
Indeed, in any unital \ca{} $A$ without one-dimensional irreducible representations (for example $A=M_2$), for every $a\in A_+$ and $\varepsilon>0$ we can set $b=a+\tfrac{\varepsilon}{2}\in A_+$, which satisfies $\|a-b\|<\varepsilon$ and $\overline{bAb}=A$.
\end{rmk}

\begin{prp}
\label{prp:separablyInheritable}
Let $A$ be a nowhere scattered \ca{}, and let $B_0\subseteq A$ be a separable sub-\ca{}.
Then there exists a separable, nowhere scattered sub-\ca{} $B\subseteq A$ such that $B_0\subseteq B$.
\end{prp}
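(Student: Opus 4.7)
The plan is to perform a L\"{o}wenheim--Skolem construction, inductively enlarging $B_0$ by countably many witnesses so that the resulting separable subalgebra satisfies the local reformulation of nowhere scatteredness provided by \autoref{prp:NWS-PreLocal}(3), together with the combinatorial criterion \autoref{prp:CharNoOneDimRepr}.

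I would build an increasing chain $B_0 \subseteq B_1 \subseteq B_2 \subseteq \cdots$ of separable sub-\ca{s} of $A$ as follows. At stage $n$, choose a countable dense subset $D_n \subseteq (B_n)_+$. For each $c \in D_n$, the hereditary sub-\ca{} $\overline{(c-\tfrac{1}{2})_+ A (c-\tfrac{1}{2})_+}$ of $A$ admits no one-dimensional irreducible representation by \autoref{prp:firstChar}(8), so applying \autoref{prp:CharNoOneDimRepr} to $(c-\tfrac{1}{2})_+ \in A_+$ yields a countable set $G_c \subseteq A$ with $((c-\tfrac{1}{2})_+ x (c-\tfrac{1}{2})_+)^2 = 0$ for each $x \in G_c$ and $(c-\tfrac{1}{2})_+ \in C^*(\{(c-\tfrac{1}{2})_+ x (c-\tfrac{1}{2})_+ : x \in G_c\})$. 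Define $B_{n+1} := C^*(B_n \cup \bigcup_{c \in D_n} G_c)$, which is separable since $B_n$ is separable and the adjoined set is countable. Finally set $B := \overline{\bigcup_n B_n}$; this is a separable sub-\ca{} of $A$ containing $B_0$.

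To conclude, I would verify condition~(3) of \autoref{prp:NWS-PreLocal} for $B$. Given $a \in B_+$, use that $\bigcup_m (B_m)_+$ is dense in $B_+$ (via Lipschitz continuity of the positive-part functional calculus) and that each $D_n$ is dense in $(B_n)_+$ to produce, for some sufficiently large $n$, an element $a'' \in D_n$ with $\|a - a''\| < \tfrac{1}{2}$. Then the pair $(b, G) := (a'', G_{a''})$ lies entirely in $B_{n+1} \subseteq B$ and witnesses condition~(3) at $a$: the nilpotency and generation relations hold in $A$ by construction and hence in $B$, while $\|a - b\| < \tfrac{1}{2}$.

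The only substantive step is recognizing \autoref{prp:NWS-PreLocal}(3), in combination with \autoref{prp:CharNoOneDimRepr}, as the right local reformulation of nowhere scatteredness: it repackages the property into a countable family of algebraic identities plus an approximate-membership condition carrying nontrivial slack, which is precisely what a L\"{o}wenheim--Skolem argument can preserve. Once this is spotted, no genuine obstacle remains; the tolerance $\tfrac{1}{2}$ in condition~(3) comfortably absorbs the approximation error incurred in passing from $a \in B_+$ to $a'' \in D_n$.
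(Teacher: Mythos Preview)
Your proposal is correct and follows essentially the same approach as the paper's proof: an inductive L\"{o}wenheim--Skolem construction using \autoref{prp:CharNoOneDimRepr} to adjoin countably many witnesses, followed by verification of condition~(3) in \autoref{prp:NWS-PreLocal}. The only differences are notational (e.g.\ $D_n$ versus the paper's $F_k$).
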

\begin{proof}
We will inductively choose:
\begin{itemize}
\item
an increasing sequence $B_0\subseteq B_1\subseteq \ldots$ of separable sub-\ca{s} of $A$;
\item
for each $k\geq 0$, a countable, dense subset $F_k\subseteq(B_k)_+$;
\item
for each $b\in F_k$ a countable set $G_{k,b}\subseteq A$ such that $((b-\tfrac{1}{2})_+x(b-\tfrac{1}{2})_+)^2=0$ for every $x\in G_{k,b}$, and such that $(b-\tfrac{1}{2})_+$ belongs to the sub-\ca{} generated by $\{(b-\tfrac{1}{2})_+x(b-\tfrac{1}{2})_+ : x\in G_{k,b}\}$.
\end{itemize}
We will also ensure that $G_{k,b}\subseteq B_{k+1}$ for each $k\geq 0$ and $b\in F_k$.

Let $k\geq 0$, and assume that we have chosen $B_k$.
We will describe how to choose $F_k$, $G_{k,b}$ and $B_{k+1}$.
First, let $F_k$ be any countable, dense subset of $(B_k)_+$.
For each $b\in F_k$, using that $\overline{(b-\tfrac{1}{2})_+A(b-\tfrac{1}{2})_+}$ has no irreducible one-dimensional representations (since $A$ is nowhere scattered), we can apply \autoref{prp:CharNoOneDimRepr} to obtain a countable subset $G_{k,b}\subseteq A$ with the claimed properties.
Then let $B_{k+1}$ be the sub-\ca{} of $A$ generated by $B_k$ together with $\bigcup_{b\in F_k}G_{k,b}$.

Set $B:=\overline{\bigcup_k B_k}$.
Then $B$ is a separable sub-\ca{} of $A$ containing $B_0$.
To see that $B$ is nowhere scattered, we verify statement~(3) of \autoref{prp:NWS-PreLocal}.
So let $a\in B_+$.
Using that $\bigcup_k F_k$ is dense in $B_+$, we can find $k$ and $b\in F_k$ such that $\|a-b\|<\tfrac{1}{2}$.
By construction, $B$ contains the set $G_{k,b}$, which satisfies the desired conditions.
\end{proof}

A property $\mathcal{P}$ for \ca{s} satisfies the \emph{L{\"o}wenheim-Skolem condition} if for every \ca{} $A$ with property $\mathcal{P}$ there exists a family $\mathcal{S}$ of separable sub-\ca{s} of $A$ that each have property $\mathcal{P}$, and such that $\mathcal{S}$ is $\sigma$-complete (for every countable, directed subfamily $\mathcal{D}\subseteq\mathcal{S}$, the \ca{} $\overline{\bigcup\mathcal{D}}\subseteq A$ belongs to $\mathcal{S}$) and cofinal (for every separable sub-\ca{} $B_0\subseteq A$ there exists $B\in\mathcal{S}$ with $B_0\subseteq B$).

It is known that many interesting properties of \ca{s} (such as real rank zero, stable rank one, nuclearity, simplicity) satisfy the L{\"o}wenheim-Skolem condition.

For properties of \CuSgp{s}, the L{\"o}wenheim-Skolem condition was considered in \cite{ThiVil21DimCu2}, where it was also shown that properties like \axiomO{5}, \axiomO{6} and weak cancellation each satisfy it;
see Sections \ref{sec:O8} and \ref{sec:rr0sr1} for definitions.

\begin{prp}
\label{prp:LS}
Let $A$ be a nowhere scattered \ca.
Then the family $\mathcal{S}$ of separable, nowhere scattered sub-\ca{s} of $A$ is $\sigma$-complete and cofinal.
In particular, nowhere scatteredness satisfies the L{\"o}wenheim-Skolem condition.
\end{prp}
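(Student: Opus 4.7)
The plan is to reduce both conditions directly to results already established in this section.

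For \emph{cofinality}, given any separable sub-\ca{} $B_0\subseteq A$, we simply apply \autoref{prp:separablyInheritable} to obtain a separable, nowhere scattered sub-\ca{} $B\subseteq A$ with $B_0\subseteq B$; then $B\in\mathcal{S}$.

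For \emph{$\sigma$-completeness}, let $\mathcal{D}\subseteq\mathcal{S}$ be a countable, directed subfamily and set $B:=\overline{\bigcup\mathcal{D}}\subseteq A$. Since $\mathcal{D}$ is countable and each of its members is separable, $B$ is separable. Moreover, the family $\mathcal{D}$ (viewed as a family of sub-\ca{s} of $B$) approximates $B$ in the sense preceding \autoref{prp:permApprox}: for every finite subset $\{a_1,\ldots,a_n\}\subseteq B$ and $\varepsilon>0$, directedness of $\mathcal{D}$ together with density of $\bigcup\mathcal{D}$ in $B$ yields some $D\in\mathcal{D}$ containing elements within $\varepsilon$ of each $a_k$. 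Since each $D\in\mathcal{D}$ is nowhere scattered, \autoref{prp:permApprox} implies that $B$ is nowhere scattered, whence $B\in\mathcal{S}$.

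The final sentence is immediate from the definition of the L{\"o}wenheim-Skolem condition recalled above.
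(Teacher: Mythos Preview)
Your proof is correct and follows essentially the same approach as the paper: cofinality via \autoref{prp:separablyInheritable}, and $\sigma$-completeness by recognizing $\overline{\bigcup\mathcal{D}}$ as built from the nowhere scattered members of~$\mathcal{D}$. The only cosmetic difference is that the paper invokes \autoref{prp:permLimit} (viewing $\overline{\bigcup\mathcal{D}}$ as an inductive limit over the directed set~$\mathcal{D}$), whereas you apply \autoref{prp:permApprox} directly; since the proof of \autoref{prp:permLimit} itself reduces to \autoref{prp:permApprox}, your route is marginally more direct but not genuinely different.
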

\begin{proof}
To show that $\mathcal{S}$ is $\sigma$-complete, let $\mathcal{D}\subseteq\mathcal{S}$ be a countable, directed subfamily. 
Then $\overline{\bigcup\mathcal{D}}$ is the inductive limit of $\mathcal{D}$, considered as a net  indexed over itself.
Hence, it follows from \autoref{prp:permLimit} that $\overline{\bigcup\mathcal{D}}$ is nowhere scattered and thus belongs to~$\mathcal{S}$, as desired.
Further, by \autoref{prp:separablyInheritable}, $\mathcal{S}$ is cofinal.
\end{proof}

\begin{prp}
\label{prp:Morita}
Let $A$ and $B$ be Morita equivalent \ca{s}.
Assume that $A$ is nowhere scattered.
Then so is $B$.

In particular, a \ca{} $D$ is nowhere scattered if and only if its stabilization $D\otimes\Cpct$ is.
\end{prp}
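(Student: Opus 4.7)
The plan is to reduce to the characterization from \autoref{prp:firstChar}(5) that a \ca{} is nowhere scattered exactly when it has no nonzero elementary ideal-quotient, and then to show that the latter property is a Morita invariant.

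To this end, I would introduce the linking \ca{} $L$ associated to an $A$-$B$ imprimitivity bimodule, inside which $A$ and $B$ sit as complementary full hereditary sub-\ca{s}. The Rieffel correspondence then provides mutually inverse lattice isomorphisms between the ideals of $L$ and those of $A$ (resp.\ $B$), via $I \mapsto I \cap A$ and $J \mapsto \overline{LJL}$. For any inclusion $I \subseteq J$ of ideals in $L$, the ideal-quotient $(J \cap A)/(I \cap A)$ appears as a full hereditary sub-\ca{} of $J/I$, and is nonzero precisely when $J \neq I$; the analogous statement holds on the $B$ side. Hence the nonzero ideal-quotients of $A$, $L$ and $B$ are in canonical bijection, and the paired ones are Morita equivalent.

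Since the elementary \ca{s} are exactly those Morita equivalent to $\CC$, elementarity is preserved under this correspondence, so $A$ has a nonzero elementary ideal-quotient if and only if $L$ does if and only if $B$ does. Applying \autoref{prp:firstChar}(5) on both sides yields that $A$ is nowhere scattered if and only if $B$ is. For the stabilization statement, I would fix any rank-one projection $p \in \Cpct$ and observe that the map $d \mapsto d \otimes p$ identifies $D$ with a full hereditary sub-\ca{} of $D \otimes \Cpct$ (fullness following from the simplicity of $\Cpct$), so $D$ and $D \otimes \Cpct$ are Morita equivalent and the first part applies. The main conceptual point is the Morita invariance of being elementary; the rest is a routine application of the linking algebra machinery together with \autoref{prp:firstChar}(5).
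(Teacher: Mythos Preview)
Your argument is correct, but it takes a somewhat different route from the paper's. Both proofs pass through the linking algebra (complementary full corners) picture, but the paper does not unwind the Rieffel correspondence on ideal-quotients. Instead, it uses two permanence results already proved: since $A\cong pCp$ is a full hereditary sub-\ca{} of the linking algebra $C$, it separates the ideals of $C$, so \autoref{prp:separatingIdeals} gives that $C$ is nowhere scattered; then \autoref{prp:permHerQuot} applied to the hereditary sub-\ca{} $(1-p)C(1-p)\cong B$ finishes the job. Your approach, by contrast, matches up the ideal-quotients of $A$, $L$, and $B$ explicitly and then invokes the Morita invariance of elementarity together with \autoref{prp:firstChar}(5). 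The paper's route is shorter and more modular, cashing in on the earlier permanence lemmas; your route is more transparent about \emph{why} the property is Morita invariant (the lattices of ideal-quotients agree up to Morita equivalence, and being elementary is a Morita invariant), and it does not rely on \autoref{prp:separatingIdeals}. Either way, the stabilization statement is handled identically.
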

\begin{proof}
By \cite[Theorem~II.7.6.9]{Bla06OpAlgs}, $A$ and $B$ are isomorphic to complementary full corners in another \ca{}, that is, there exists a \ca{} $C$ and a projection $p\in M(C)$ such that $pCp$ and $(1-p)C(1-p)$ are full (hereditary) sub-\ca{s} of $C$ satisfying $A\cong pCp$ and $B\cong(1-p)C(1-p)$.

Assuming that $A$ is nowhere scattered, it follows that $pCp$ is as well.
Since~$pCp$ is a full hereditary sub-\ca{}, it separates the ideals of~$C$.
Hence, $C$ is nowhere scattered by \autoref{prp:separatingIdeals}.
Using that $(1-p)C(1-p)$ is a hereditary sub-\ca{} of $C$, it is nowhere scattered by \autoref{prp:permHerQuot}.
Thus, $B$ is nowhere scattered.
\end{proof}

\begin{prp}
\label{prp:permSum}
Let $(A_j)_{j\in J}$ be a family of nowhere scattered \ca{s}.
Then the direct sum $\bigoplus_{j\in J} A_j$ is nowhere scattered.
\end{prp}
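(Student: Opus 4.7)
The plan is to reduce the infinite direct sum to finite direct sums (handled by the extension result) and then to pass to the limit using the approximation result.

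First I would treat the case of two summands. If $A_1$ and $A_2$ are nowhere scattered, the natural short exact sequence $0\to A_1\to A_1\oplus A_2\to A_2\to 0$, in which $A_1$ sits as the ideal of elements with second coordinate zero and the quotient is $A_2$, lets us apply \autoref{prp:permExt} and conclude that $A_1\oplus A_2$ is nowhere scattered. Induction then yields that every finite direct sum $\bigoplus_{j\in F}A_j$ of nowhere scattered \ca{s} is nowhere scattered.

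Second, I would reduce the general (possibly uncountable) case to the finite one by an approximation argument. Set $A:=\bigoplus_{j\in J}A_j$, where the sum denotes the $c_0$-direct sum. For each finite subset $F\subseteq J$, let $A_F\subseteq A$ be the sub-\ca{} of tuples $(a_j)_{j\in J}$ with $a_j=0$ whenever $j\notin F$; then $A_F\cong\bigoplus_{j\in F}A_j$, which is nowhere scattered by the previous step. The family $(A_F)_F$, indexed by the directed set of finite subsets of $J$, approximates $A$: by the very definition of the $c_0$-direct sum, every element of $A$ is the norm limit of its finite truncations.

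Finally, \autoref{prp:permApprox} applied to the family $(A_F)_F$ gives that $A$ is nowhere scattered, completing the proof. There is no serious obstacle here — the only point requiring a little care is the convention that $\bigoplus_{j\in J}A_j$ means the $c_0$-direct sum, so that the approximation by finitely supported tuples is indeed norm-dense; once this is fixed, the argument is a clean combination of \autoref{prp:permExt} and \autoref{prp:permApprox}.
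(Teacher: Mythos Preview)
Your proof is correct and follows essentially the same route as the paper: reduce to finite sums via \autoref{prp:permExt} and induction, then pass to the general case. The only cosmetic difference is that the paper invokes \autoref{prp:permLimit} (viewing $\bigoplus_{j\in J}A_j$ as the inductive limit of the $A_F$), whereas you appeal directly to \autoref{prp:permApprox}; since \autoref{prp:permLimit} is itself proved via \autoref{prp:permApprox}, the two arguments are equivalent.
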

\begin{proof}
By \autoref{prp:permExt}, nowhere scatteredness passes to sums of finitely many summands.
Thus, for every finite subset $F\subseteq J$, the sum $\bigoplus_{j\in F} A_j$ is nowhere scattered.
Now the result follows from \autoref{prp:permLimit}, using that $\bigoplus_{j\in J} A_j$ is the inductive limit of $\bigoplus_{j\in F} A_j$, indexed over the finite subsets of $J$ ordered by inclusion.
\end{proof}

The next example shows that nowhere scatteredness does not pass to products (of infinitely many summands).

\begin{exa}
\label{exa:NoPermProd}
By \cite[Corollary~8.6]{RobRor13Divisibility}, there exists a sequence $(A_k)_{k\in\NN}$ of unital, simple, infinite-dimensional \ca{s} such that their product $\prod_k A_k$ has a one-dimensional, irreducible representation. 
Thus, while each $A_k$ is nowhere scattered, their product is not.

This example also shows that nowhere scatteredness does not pass to multiplier algebras of separable \ca{s}.
As an example, consider $A:=\bigoplus_k A_k$ with $A_k$ as above.
Then $A$ is separable and nowhere scattered by \autoref{prp:permSum}.
Further, it is well known that $M(A)$ is canonically isomorphic to $\prod_k A_k$; see, for example, \cite[II.8.1.3]{Bla06OpAlgs}.
\end{exa}

\begin{exa}
\label{exa:NoPermMultSimple}
Let $M$ be a type~$\mathrm{II}_1$ factor, let $\varphi\colon M\to\CC$ be a pure state, and let $A$ be the associated hereditary kernel, that is,
\[
A = \big\{ a\in M : \varphi(aa^*)=\varphi(a^*a)=0 \big\}.
\]

By \cite[Theorem~1]{Sak71DerivationsSimple3}, $A$ is a simple \ca{} such that $D(A)/A\cong\CC$, where $D(A)$ is Sakai's derived algebra.
Pedersen showed that the derived algebra of a simple \ca{} is naturally isomorphic with its multiplier algebra;
see the remarks after Proposition~2.6 in \cite{Ped72ApplWeakSemicontinuity}.

Thus, $A$ is a simple, nowhere scattered \ca{} with $M(A)/A\cong\CC$, whence $M(A)$ is not nowhere scattered.
\end{exa}

Examples~\ref{exa:NoPermProd} and \ref{exa:NoPermMultSimple} above show that for a nonunital, nowhere scattered \ca{} $A$ the multiplier algebra $M(A)$ may have a one-dimensional irreducible representation (and hence $M(A)$ is not nowhere scattered) even if we additionally assume that $A$ is separable or simple.
We suspect that there are also examples for the case that $A$ is separable \emph{and} simple.

\begin{qst}
Does there exist a nonunital, separable, simple, nonelementary \ca{} $A$ such that $M(A)$ has a one-dimensional irreducible representation?
\end{qst}

\section{Topological characterizations}
\label{sec:topological}

A topological space $X$ is said to be \emph{scattered} (or \emph{dispersed}) if every nonempty closed subset $C$ of $X$ contains a point that is isolated relative to $C$.

\begin{dfn}
\label{dfn:spaceNWS}
We say that a topological space $X$ is \emph{nowhere scattered} if no closed subset of $X$ contains an isolated point.
\end{dfn}

A subset of a topological space is said to be \emph{perfect} if it is closed and contains no isolated points.
Thus, a topological space is nowhere scattered if and only if each of its closed subsets is perfect.
It follows that nowhere scatteredness passes to closed subspaces.
Further, using that an isolated point in an open subset is also isolated in the whole space, we  see that nowhere scatteredness passes to open subspaces, and thus to locally closed subspaces.
Considering one-element subsets shows that nowhere scatteredness does not pass to every subspace.

\begin{prp}
\label{prp:charSpaceNWS}
Let $X$ be a topological space.
Then the following are equivalent:
\begin{enumerate}
\item
$X$ is nowhere scattered; 
\item
$X$ has no nonempty, scattered, locally closed subsets;
\item
$X$ has no nonempty, locally closed subset that is $T_1$;
\item
$X$ has no locally closed subset containing only one element.
\end{enumerate}
\end{prp}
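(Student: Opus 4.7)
The plan is to prove a cycle of implications $(1)\Rightarrow(2)\Rightarrow(3)\Rightarrow(4)\Rightarrow(1)$. The whole argument amounts to unwinding two definitions: a subset $Y\subseteq X$ is locally closed precisely when $Y=U\cap C$ for some open $U$ and closed $C$ in $X$, and a point $y\in Y$ is isolated in $Y$ precisely when $\{y\}=V\cap Y$ for some open $V\subseteq X$. Combined with the elementary fact that the intersection of two locally closed subsets of $X$ is itself locally closed in $X$, these observations will do all the work.

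For $(1)\Rightarrow(2)$, I would take a nonempty scattered locally closed subset $Y=U\cap C$, apply the scatteredness of $Y$ to the closed subset $Y$ of itself to obtain a point $y\in Y$ that is isolated in $Y$, and then write $\{y\}=V\cap Y=(U\cap V)\cap C$ to exhibit $y$ as an isolated point of the closed set $C$ of $X$, contradicting $(1)$. For $(2)\Rightarrow(3)$, if $Y$ is a nonempty locally closed $T_1$ subset, then any point $y\in Y$ has $\{y\}$ closed in $Y$ by the $T_1$ axiom, so $\{y\}$ is locally closed in $X$; as a singleton it is trivially scattered, contradicting $(2)$. The implication $(3)\Rightarrow(4)$ is immediate, since every one-element space is $T_1$. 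Finally, for $(4)\Rightarrow(1)$, if $x$ is an isolated point of some closed $C\subseteq X$, then $\{x\}=V\cap C$ is a one-element locally closed subset, contradicting $(4)$.

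I do not anticipate any real obstacle here, as each implication collapses to a one-line manipulation once the definitions are written out. If anything, the only point requiring a moment of thought is the choice made in $(1)\Rightarrow(2)$ of applying scatteredness to all of $Y$ rather than to some proper closed subset; this is what produces an isolated point whose ambient witness $V$ automatically combines with the open part $U$ in the local closure of $Y$ to yield an isolated point of a closed subset of $X$.
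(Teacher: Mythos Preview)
Your proof is correct and follows essentially the same cycle $(1)\Rightarrow(2)\Rightarrow(3)\Rightarrow(4)\Rightarrow(1)$ as the paper. The only cosmetic difference is in $(1)\Rightarrow(2)$: the paper invokes the fact (established just before the proposition) that nowhere scatteredness passes to locally closed subspaces and then observes that a nonempty nowhere scattered space cannot be scattered, whereas you unpack this inline by writing $Y=U\cap C$ and extracting an isolated point of $C$ directly; the underlying content is identical.
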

\begin{proof}
Since nowhere scatteredness passes to locally closed subsets, and since a nowhere scattered space is not scattered, we see that~(1) implies~(2).
Since a singleton is scattered, it follows that~(2) implies~(3).
It is clear that~(3) implies~(4).
Finally, it follows directly from the definition that~(4) implies~(1).
\end{proof}

Let $A$ be a \ca{}.
We use $\widehat{A}$ to denote the spectrum of $A$, that is, the set of unitary equivalence classes of irreducible representations of $A$, equipped with the hull-kernel topology.
We refer to \cite[Paragraph~II.6.5.13]{Bla06OpAlgs} for details.

By \cite[Corollary~3]{Jen78ScatteredCAlg2}, $A$ is scattered if and only if $A$ is of type~$\mathrm{I}$ and $\widehat{A}$ is scattered as a topological space.
A separable \ca{} is of type~$\mathrm{I}$ if and only if its spectrum is a $T_0$-space;
see \cite[Theorem~IV.1.5.7]{Bla06OpAlgs}.
Since scattered spaces are $T_0$, it follows that a separable \ca{} $A$ is scattered if and only if $\widehat{A}$ is scattered.
The assumption of separability is necessary:
Akemann and Weaver's counterexample to the Naimark problem, \cite{AkeWea04CounterNaimark}, is a \ca{} that is nonelementary and simple (hence, not scattered), but whose spectrum is a one-point space (hence, scattered).
This also shows that the forward implication in \autoref{prp:charTopological} below does not hold for general nonseparable \ca{s}, although the backwards implication does.

\begin{thm}
\label{prp:charTopological}
A separable \ca{} is nowhere scattered if and only if its spectrum is.
\end{thm}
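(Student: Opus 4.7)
The plan is to exploit the standard correspondence between ideal-quotients of $A$ and locally closed subsets of $\widehat{A}$: an ideal-quotient $J/I$ corresponds to the locally closed set $\widehat{J}\setminus\widehat{I}\subseteq\widehat{A}$, and this subset is naturally homeomorphic to $\widehat{J/I}$. Every locally closed subset of $\widehat{A}$ arises this way. With this dictionary in hand, the theorem reduces to matching the characterizations of nowhere scatteredness for $A$ (\autoref{prp:firstChar}) against those for $\widehat{A}$ (\autoref{prp:charSpaceNWS}).

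For the backward implication, which the excerpt notes does not require separability, I would argue contrapositively. If $A$ is not nowhere scattered, then \autoref{prp:firstChar}(5) yields a nonzero elementary ideal-quotient $J/I\cong\Cpct(H)$, whose spectrum is a single point. Under the above correspondence this produces a locally closed singleton in $\widehat{A}$, contradicting \autoref{prp:charSpaceNWS}(4). Thus $\widehat{A}$ is not nowhere scattered either.

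For the forward implication, I would assume $A$ is separable and nowhere scattered, and that $\widehat{A}$ fails to be nowhere scattered. By \autoref{prp:charSpaceNWS} there is then a nonempty, scattered, locally closed subset $L\subseteq\widehat{A}$. Let $J/I$ be the corresponding nonzero ideal-quotient; then $\widehat{J/I}\cong L$ is scattered and in particular $T_0$. Since $J/I$ is separable (being an ideal-quotient of a separable \ca), the characterization of type~$\mathrm{I}$ separable \ca{s} by $T_0$ spectrum \cite[Theorem~IV.1.5.7]{Bla06OpAlgs} forces $J/I$ to be type~$\mathrm{I}$. Then \autoref{pgr:scattered}(6) tells us that $J/I$ is scattered, contradicting \autoref{prp:firstChar}(4).

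The only delicate point is the forward direction: separability is essential in passing from the $T_0$ property of $\widehat{J/I}$ to the type~$\mathrm{I}$ property of $J/I$, and it is precisely this step which the Akemann--Weaver example mentioned in the excerpt shows cannot be avoided in the nonseparable setting. Everything else is a routine translation through the ideal-quotient/locally-closed dictionary together with the characterizations already available in the paper.
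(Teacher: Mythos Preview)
Your proposal is correct and follows essentially the same approach as the paper: both exploit the correspondence between ideal-quotients of $A$ and locally closed subsets of $\widehat{A}$, together with the fact (established in the paragraph preceding the theorem) that a separable \ca{} is scattered if and only if its spectrum is. The only cosmetic difference is that the paper runs both directions symmetrically through condition~(4) of \autoref{prp:firstChar} and condition~(2) of \autoref{prp:charSpaceNWS}, whereas you use the elementary/singleton pair for the backward implication and spell out the ``$T_0$ $\Rightarrow$ type~$\mathrm{I}$ $\Rightarrow$ scattered'' chain explicitly for the forward one---which has the small advantage of making transparent why the backward implication needs no separability assumption.
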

\begin{proof}
Let $A$ be a separable \ca.
By \autoref{prp:firstChar}, $A$ is nowhere scattered if and only if it has no nonzero scattered ideal-quotients.
On the other hand, by \autoref{prp:charSpaceNWS}, $\widehat{A}$ is nowhere scattered if and only if it has no  nonempty scattered locally closed subsets. Using that a separable \ca{} is scattered if and only if its spectrum is (see \cite[Corollary~3]{Jen78ScatteredCAlg2}), the result now follows from the natural correspondence between ideal-quotients of $A$ and locally closed subsets of $\widehat{A}$.
\end{proof}

\section{Diffuse masas and Haar unitaries}
\label{sec:diffuse}

In this section, we observe that a \ca{} is nowhere scattered if and only if each of its positive functionals is.
We use this result to connect nowhere scatteredness of a \ca{} to the existence of Haar unitaries and diffuse masas (maximal abelian sub-\ca{s}) for positive functionals.

Let $A$ be a \ca, and let $\varphi\colon A\to\CC$ be a positive functional.
We say that~$\varphi$ is \emph{nowhere scattered} if it gives no weight to scattered ideal-quotients of $A$, that is, if $\|\varphi|_I\|=\|\varphi|_J\|$ whenever $I\subseteq J\subseteq A$ are ideals such that $J/I$ is scattered;
see \cite[Definition~3.5]{Thi20arX:diffuseHaar}.

If $A$ is unital, then a unitary $u\in A$ is said to be a \emph{Haar unitary} with respect to~$\varphi$ if $\varphi(u^k)=0$ for all $k\in\mathbb{Z}\setminus\{0\}$;
see \cite[Definition~4.8]{Thi20arX:diffuseHaar}.
This definition is a generalization to the setting of positive functionals of the well-established notion of Haar unitaries with respect to traces.
By \cite[Proposition~4.9]{Thi20arX:diffuseHaar}, $\varphi$ admits a Haar unitary if and only if there exists a unital (maximal) abelian sub-\ca{} $C(X)\subseteq A$ such that $\varphi$ induces a diffuse measure on $X$. We extend this to the nonunital setting in \autoref{prp:nonunital} below.

\begin{lma}
\label{prp:nonunital}
Let $\varphi$ be a positive functional on a nonunital \ca{} $A$, and let $\widetilde{A}:=A+\CC 1\subseteq A^{**}$ be the minimal unitization of $A$.
Set $\tilde{\varphi}:=\varphi^{**}|_{\widetilde{A}}\colon\widetilde{A}\to\CC$, which is the canonical extension of $\varphi$ to a positive functional on $\widetilde{A}$.
Then $\tilde{\varphi}$ admits a Haar unitary if and only if there exists a maximal abelian sub-\ca{} $C_0(X)\subseteq A$ such that $\varphi$ induces a diffuse measure $\mu$ on $X$ with $\mu(X)=\|\varphi\|$.
\end{lma}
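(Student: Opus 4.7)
The plan is to reduce everything to the unital case \cite[Prop.~4.9]{Thi20arX:diffuseHaar}, applied to $\tilde{\varphi}$ on $\widetilde{A}$, by transporting masas and measures between $\widetilde{A}$ and $A$ via the one-point compactification of the underlying spaces.

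I would first set up a bijective correspondence between masas $C_0(X) \subseteq A$ and unital masas $C(Y) \subseteq \widetilde{A}$. Given a masa $C_0(X) \subseteq A$, the sub-\ca{} $C_0(X) + \CC 1 \subseteq \widetilde{A}$ is canonically isomorphic to $C(X^+)$, where $X^+$ denotes the one-point compactification of $X$; a short commutator check---any $a + \lambda 1 \in \widetilde{A}$ commuting with $C_0(X) + \CC 1$ must have $a \in A$ commuting with $C_0(X)$, whence $a \in C_0(X)$---shows that it is a unital masa in $\widetilde{A}$. Conversely, given a unital masa $C(Y) \subseteq \widetilde{A}$, the character $\chi \colon \widetilde{A} \to \CC$ with kernel $A$ restricts to a character of $C(Y)$ and hence, by Gelfand duality, equals evaluation at some point $y_0 \in Y$; setting $X := Y \setminus \{y_0\}$ then gives $C(Y) \cap A \cong C_0(X)$, and the symmetric commutator argument shows this is a masa in $A$.

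Next I would match the induced measures. Let $\tilde{\mu}$ be the regular Borel measure on $Y$ coming from $\tilde{\varphi}|_{C(Y)}$, and $\mu$ the one on $X$ coming from $\varphi|_{C_0(X)}$. Since $\tilde{\varphi}(g) = \varphi(g)$ for every $g \in C_0(X) \subseteq C(Y)$, we have $\mu = \tilde{\mu}|_X$; in addition $\tilde{\mu}(Y) = \tilde{\varphi}(1) = \|\varphi\|$, which gives $\tilde{\mu}(\{y_0\}) = \|\varphi\| - \mu(X)$. Consequently $\tilde{\mu}$ is diffuse on $Y$ if and only if $\mu$ is diffuse on $X$ and $\mu(X) = \|\varphi\|$, and combining this equivalence with \cite[Prop.~4.9]{Thi20arX:diffuseHaar} applied to $\tilde{\varphi}$ proves both directions of the lemma.

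The only subtle point is the bookkeeping around the distinguished point $y_0$: diffuseness of $\tilde{\mu}$ is exactly what forces the Dirac mass at $y_0$ to vanish, and this vanishing is precisely the total-mass condition $\mu(X) = \|\varphi\|$ on the $A$-side. All remaining verifications---the masa check, the Riesz identifications, and the fact that $\tilde{\varphi}(1) = \|\varphi\|$---are routine.
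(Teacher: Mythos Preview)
Your proposal is correct and follows essentially the same approach as the paper: both reduce to \cite[Proposition~4.9]{Thi20arX:diffuseHaar} by passing between masas in $A$ and unital masas in $\widetilde{A}$ via the one-point compactification, and both identify the total-mass condition $\mu(X)=\|\varphi\|$ with the vanishing of the point mass at the added point. The only cosmetic difference is that you set up the masa correspondence symmetrically as a bijection before treating the measures, whereas the paper handles each implication separately and only checks maximality where it is strictly needed (for the forward direction); the underlying arguments---the commutator check and the computation $\tilde{\mu}(\{y_0\})=\|\varphi\|-\mu(X)$---are the same.
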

\begin{proof}
To show the backward implication, assume that we have an abelian sub-\ca{} $C_0(X)\subseteq A$ with the stated properties.
We identify $C_0(X)+\CC 1\subseteq\widetilde{A}$ with $C(\tilde{X})$, where $\tilde{X}$ is the forced one-point compactification of $X$.
Recall that, if $X$ is already compact, then $\widetilde{X}$ is the disjoint union of $X$ and one extra point.

Let $\tilde{\mu}$ be the measure on $\widetilde{X}$ induced by $\tilde{\varphi}$.
Then
\[
\tilde{\mu}(\widetilde{X})
= \|\tilde{\varphi}\|
= \|\varphi\|
= \mu(X).
\]

Hence, $\tilde{\mu}(\widetilde{X}\setminus X)=0$, which implies that $\tilde{\mu}$ is diffuse.
It follows that a Haar unitary for $\tilde{\varphi}$ can be found in $C_0(X)+\CC 1\subseteq\widetilde{A}$.

Conversely, assume that $\tilde{\varphi}$ admits a Haar unitary.
By \cite[Proposition~4.9]{Thi20arX:diffuseHaar}, we obtain a maximal abelian sub-\ca{} $C(Y)\subseteq\widetilde{A}$ such that $\tilde{\varphi}$ induces a diffuse measure $\mu$ on $Y$.
We claim that $A\cap C(Y)$ has the desired properties.

Note that $1\in C(Y)$.
Let $\pi\colon\widetilde{A}\to\CC$ be the canonical one-dimensional representation such that $\ker(\pi)=A$.
The restriction of $\pi$ to $C(Y)$ corresponds to the evaluation at a point $y\in Y$, and the ideal $A\cap C(Y)$ of $C(Y)$ naturally corresponds to the open subset $X:=Y\setminus\{y\}$. 
We identify $C_0(X)$ with $A\cap C(Y)$, and we note that the measure on $X$ induced by $\varphi|_{C_0(X)}$ is the restriction of $\mu$ to $X$.
This measure on $X$ is therefore diffuse.

Using that $\mu$ is diffuse on $Y$, we have $\mu(X)=\mu(Y)$ and thus
\[
\mu(X)
= \mu(Y)
= \|\tilde{\varphi}\|
= \|\varphi\|.
\]

Finally, using that $C(Y)\subseteq\widetilde{A}$ is maximal abelian, it follows that $C_0(X)\subseteq A$ is maximal abelian as well.
\end{proof}

\begin{thm}
\label{prp:charDiffuse}
Let $A$ be a \ca{}.
Then the following are equivalent:
\begin{enumerate}
\item
$A$ is nowhere scattered;
\item
every positive functional on $A$ is nowhere scattered;
\item
every pure state on $A$ is nowhere scattered;
\item
for every positive functional  $\varphi\colon A\to\CC$ and every hereditary sub-\ca{} $B\subseteq A$ there exists a Haar unitary in $\widetilde{B}$ with respect to $\varphi$;
\item
for every pure state $\varphi\colon A\to\CC$ and every ideal $I\subseteq A$ there exists a Haar unitary in $\widetilde{I}$ with respect to $\varphi$;
\item
for every positive functional $\varphi\colon A\to\CC$ and every hereditary sub-\ca{} $B\subseteq A$ there exists a maximal abelian sub-\ca{} $C_0(X)\subseteq B$ such that $\varphi$ induces a diffuse measure $\mu$ on $X$ with $\mu(X)=\|\varphi|_B\|$;
\item
for every pure state $\varphi\colon A\to\CC$ and every ideal $I\subseteq A$ there exists a maximal abelian sub-\ca{} $C_0(X)\subseteq I$ such that $\varphi$ induces a diffuse measure $\mu$ on $X$ with $\mu(X)=\|\varphi|_I\|$.
\end{enumerate}
\end{thm}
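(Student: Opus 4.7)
The plan is to organize the nine conditions into three clusters---algebraic (1)--(3), Haar unitary (4)--(5), and diffuse masa (6)--(7)---and to bridge them using Lemma~\ref{prp:nonunital} together with the characterization from \cite{Thi20arX:diffuseHaar} that a positive functional on a \ca{} is nowhere scattered precisely when its canonical extension to the unitization admits a Haar unitary.

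Within the algebraic cluster, (1)$\Rightarrow$(2) is immediate from \autoref{prp:firstChar}: a nowhere scattered $A$ has no nonzero scattered ideal-quotients, so the defining equality $\|\varphi|_I\|=\|\varphi|_J\|$ holds vacuously for every positive functional $\varphi$. The step (2)$\Rightarrow$(3) is trivial since pure states are positive functionals. For (3)$\Rightarrow$(1) I argue by contrapositive: if $A$ is not nowhere scattered, pick ideals $I\subsetneq J\subseteq A$ with $J/I$ nonzero and scattered, take a pure state $\sigma$ on $J/I$, extend it to a pure state on $A/I$ via the standard extension theorem for pure states from ideals (which preserves the norm of the restriction to $J/I$), and compose with the quotient map $A\to A/I$ to produce a pure state $\varphi$ on $A$ with $\varphi|_I=0$ but $\|\varphi|_J\|=1$, contradicting~(3). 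The equivalences (4)$\Leftrightarrow$(6) and (5)$\Leftrightarrow$(7) are direct applications of \autoref{prp:nonunital} to the restriction of $\varphi$ to $B$ (respectively to $I$), with the unital special case handled by \cite[Proposition~4.9]{Thi20arX:diffuseHaar}.

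The final bridge connects the algebraic and analytic clusters. For (1)$\Rightarrow$(4): if $A$ is nowhere scattered then every hereditary sub-\ca{} $B\subseteq A$ is nowhere scattered by \autoref{prp:permHerQuot}, so $\varphi|_B$ is vacuously nowhere scattered on $B$, and the `if' direction of the characterization from \cite{Thi20arX:diffuseHaar} yields a Haar unitary in $\widetilde{B}$ for $\varphi|_B$. The implication (1)$\Rightarrow$(5) is the same argument restricted to pure states and ideals. Conversely, (4)$\Rightarrow$(2) and (5)$\Rightarrow$(3) follow from the `only if' direction of that characterization upon setting $B=A$ in~(4) and $I=A$ in~(5), respectively. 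I expect the main delicacy to be the norm bookkeeping in the~(3)$\Rightarrow$(1) extension step and matching the total-mass condition $\mu(X)=\|\varphi|_B\|$ of \autoref{prp:nonunital} with the objects appearing in~(6) and~(7); both points are essentially routine, but they require careful attention to which algebra a given functional is viewed on, and the substantive content of the theorem is carried by the previously established results cited above.
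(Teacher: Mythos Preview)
Your proof is correct and follows essentially the same approach as the paper: the equivalence of (1)--(3) is handled identically (your contrapositive for (3)$\Rightarrow$(1) routes through $A/I$ rather than through $J$, but this is the same argument), the bridges (4)$\Leftrightarrow$(6) and (5)$\Leftrightarrow$(7) both come from \autoref{prp:nonunital}, and the link between the algebraic and Haar-unitary clusters rests on the characterization from \cite{Thi20arX:diffuseHaar}. The only organizational difference is that the paper invokes \cite[Theorem~4.11]{Thi20arX:diffuseHaar} directly for (2)$\Leftrightarrow$(4) and (3)$\Leftrightarrow$(5), whereas you factor through (1) via hereditary permanence---and note that there are seven conditions, not nine.
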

\begin{proof}
By \autoref{prp:firstChar}, $A$ is nowhere scattered if and only if $A$ has no nonzero scattered ideal-quotients.
This shows that~(1) implies~(2), which in turn implies~(3).
To see that~(3) implies~(1), assume for the sake of contradiction that $A$ is not nowhere scattered.
Then there exist ideals $I\subseteq J\subseteq A$ such that $J/I$ is nonzero and scattered.
Choose a nonzero pure state $\varphi_0$ on $J/I$.
Composing with the quotient map $J\to J/I$ we obtain a pure state $\varphi$ on $J$, which we can extend to a pure state~$\tilde{\varphi}$ on~$A$.
Then $\tilde{\varphi}|_I=0$, while $\tilde{\varphi}|_J\neq 0$, which shows that $\tilde{\varphi}$ is a pure state on $A$ that is \emph{not} nowhere scattered, a contradiction.

By \cite[Theorem~4.11]{Thi20arX:diffuseHaar}, (2) is equivalent to~(4).
Similarly, (3) is equivalent to~(5).
Finally, it follows from \autoref{prp:nonunital} that~(4) is equivalent to~(6), and that~(5) is equivalent to~(7).
\end{proof}

\section{A new property of Cuntz semigroups}
\label{sec:O8}

We introduce a new property for \CuSgp{s}, called \axiomO{8}; see \autoref{dfn:O8}. This new property can be thought of as the \CuSgp{} version of the projectivity of $C_0((0,1])\oplus C_0((0,1])$, which allows one to lift orthogonal positive elements in a quotient of a \ca.
We show the the Cuntz semigroup of every \ca{} satisfies \axiomO{8};
see \autoref{prp:CuHasO8}.
We also deduce a result that can be interpreted as the $\CatCu$-version of the projectivity of $M_n(C_0((0,1]))$;
see \autoref{prp:O8DivRef}.

\begin{pgr}
\label{pgr:Cu}
Let $A$ be a \ca.
Given $a,b\in A_+$, one says that $a$ is \emph{Cuntz subequivalent} to~$b$, denoted $a\precsim b$, if there is a sequence $(r_n)_n$ in $A$ such that $\lim_n \|a-r_nbr_n^*\|=0$.
Further, $a$ is said to be \emph{Cuntz equivalent} to $b$, in symbols $a\sim b$, if $a\precsim b$ and $b\precsim a$.
The \emph{Cuntz semigroup} of $A$ is the set of Cuntz equivalence classes of positive elements in the stabilization of $A$, that is,
\[
\Cu(A) := (A\otimes\mathcal{K})_+/\!\sim,
\]
where the class of a positive element $a$ is denoted by $[a]$.
One equips $\Cu(A)$ with the  addition induced by the orthogonal sum, and with the partial order given by $[a]\leq[b]$ if $a\precsim b$.
This turns $\Cu(A)$ into a partially ordered, abelian monoid.

As shown in \cite{CowEllIva08CuInv}, the Cuntz semigroup of any \ca{} enjoys additional order-theoretic properties; see also \cite{AraPerTom11Cu} and \cite{AntPerThi18TensorProdCu}. To state them, recall that given two elements $x,y$ in a partially ordered set, one says that $x$ is \emph{way-below}~$y$, denoted $x\ll y$, if for every increasing sequence $(y_k)_k$ with supremum satisfying $y\leq\sup_k y_k$ there exists $k'$ such that $x\leq y_{k'}$.
Given a \ca{} $A$, $[a]\ll[b]$ in $\Cu (A)$ if and only if there exists $\varepsilon>0$ such that $a\precsim (b-\varepsilon)_+$, where $(b-\varepsilon)_+$ is the \emph{$\varepsilon$-cut-down} of~$b$ obtained by applying functional calculus for the function $t\mapsto\max\{0,t-\varepsilon\}$ to~$b$.

One says that a positively ordered, abelian monoid $S$ is a \emph{\CuSgp{}}, also called \emph{abstract Cuntz semigroup}, if the following conditions are satisfied:
\begin{enumerate}
\item[\axiomO{1}]
Every increasing sequence in $S$ has a supremum.
\item[\axiomO{2}]
For every $x\in S$ there exists a $\ll$-increasing sequence $(x_n)_n$ with $x=\sup_n x_n$.
\item[\axiomO{3}]
If $x'\ll x$ and $y'\ll y$ in $S$, then $x'+y'\ll x+y$.
\item[\axiomO{4}]
If $(x_n)_n$ and $(y_n)_n$ are increasing sequences in $S$, then $\sup_n(x_n+y_n)=\sup_nx_n + \sup_ny_n$.
\end{enumerate}
By \cite{CowEllIva08CuInv}, $\Cu(A)$ is a \CuSgp{} for every \ca{} $A$.

Since the introduction of \CuSgp{s}, the Cuntz semigroup of any \ca{} has been shown to satisfy additional properties, such as the two stated below; see \cite{AntPerThi18TensorProdCu} and \cite{Rob13Cone} respectively, and also \cite{AntPerRobThi21Edwards}.
\begin{enumerate}
\item[\axiomO{5}]
Given $x+y\leq z$, $x'\ll x$ and $y'\ll y$ in $S$ there exists $c\in S$ such that $x'+c\leq z\leq x+c$ and $y'\ll c$.
(This property is often applied with $y'=y=0$, in which case it says that for $x'\ll x\leq z$ there exists an `almost complement' $c$ such that $x'+c\leq z\leq x+c$.)
\item[\axiomO{6}]
Given $x'\ll x\leq y+z$ in $S$ there exist $e,f\in S$ such that $x'\leq e+f$ with $e\leq x,y$, and $f\leq x,z$.
\end{enumerate}
\end{pgr}


\begin{dfn}
\label{dfn:O8}
Let $S$ be a \CuSgp{}.
We say that $S$ satisfies \axiomO{8} if for all $x',x,y',y,z,w\in S$ satisfying $2w=w$ and 
\[
x+y \ll z+w, \quad
x'\ll x, \andSep
y'\ll y
\]
there exist $z_1,z_2\in S$ such that
\[
z_1+z_2\ll z, \quad
x'\ll z_1+w, \quad
y'\ll z_2+w, \quad
z_1\ll x+w, \andSep
z_2\ll y+w.
\]
\end{dfn}

\begin{rmk}
\label{rmk:O8Reduc}
A \CuSgp{} $S$ satisfies \axiomO{8} if and only if for all $x',x,y',y,z,w$ in $S$ satisfying $2w=w$ and 
\[
x+y \ll z+w, \quad
x'\ll x, \andSep
y'\ll y
\]
there exist $z_1,z_2\in S$ such that
\[
z_1+z_2\leq z, \quad
x'\leq z_1+w, \quad
y'\leq z_2+w, \quad
z_1\leq x+w, \andSep
z_2\leq y+w.
\]

Indeed, the forward implication is trivial. To see the backward implication, let $x',x,y',y,z,w$ satisfy the conditions in the statement, and choose $\tilde{x},\tilde{y},\tilde{z}$  such that
\[
 x+y\ll \tilde{z}+w,\quad 
 x'\ll \tilde{x}\ll x,\quad
 y'\ll \tilde{y}\ll y,\andSep 
 \tilde{z}\ll z.
\]

Then, there exist $t_1,t_2\in S$ with
\[
t_1+t_2\leq \tilde{z}, \quad
\tilde{x}\leq t_1+w, \quad
\tilde{y}\leq t_2+w, \quad
t_1\leq x+w, \andSep
t_2\leq y+w.
\]

In particular, since $x'\ll t_1+w$ and $y'\ll t_2+w$, we can find elements $z_1\ll t_1$ and $z_2\ll t_2$ such that $x'\ll z_1+w$ and $y'\ll z_2+w$. It is easy to check that such elements satisfy the desired conditions.
\end{rmk}

\begin{thm}
\label{prp:CuHasO8}
The Cuntz semigroup of every \ca{} satisfies \axiomO{8}.
\end{thm}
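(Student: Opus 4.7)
By \autoref{rmk:O8Reduc}, it suffices to establish the $\leq$-version of \axiomO{8}, so fix $x', x, y', y, z, w \in \Cu(A)$ satisfying the hypotheses and look for $z_1, z_2$ with $z_1 + z_2 \leq z$, $x' \leq z_1 + w$, $y' \leq z_2 + w$, $z_1 \leq x + w$, and $z_2 \leq y + w$. The plan is to translate the inequality $x + y \ll z + w$ into a concrete Cuntz subequivalence of positive elements, and then perform an orthogonal splitting inside a direct sum of hereditary sub-\ca{s}.

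\textbf{Key steps.} Choose positive representatives $a, b, c, d \in (A \otimes \Cpct)_+$ of $x, y, z, w$ with $a \perp b$ and $c \perp d$; this is possible in the stabilization by placing them in distinct matrix corners. Set $A_a := \overline{a(A \otimes \Cpct)a}$ and $A_b := \overline{b(A \otimes \Cpct)b}$. Since $a \perp b$, one has $A_a A_b = 0$, so $A_a + A_b$ is a hereditary sub-\ca{} canonically isomorphic to $A_a \oplus A_b$. From the hypotheses, pick $\varepsilon_0 > 0$ with $a + b \precsim (c - \varepsilon_0)_+ + (d - \varepsilon_0)_+$, and $\eta > 0$ with $x' \leq [(a - \eta)_+]$ and $y' \leq [(b - \eta)_+]$. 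By R{\o}rdam's lemma (cf.\ \cite[Lemma~2.2]{KirRor02InfNonSimpleCalgAbsOInfty}), there exists $r \in A \otimes \Cpct$ with
\[
(a - \eta)_+ + (b - \eta)_+ = (a + b - \eta)_+ = r\bigl((c - \varepsilon_0)_+ + (d - \varepsilon_0)_+\bigr)r^*.
\]
Setting $s := r(c - \varepsilon_0)_+ r^*$ and $t := r(d - \varepsilon_0)_+ r^*$, both are positive with $s + t$ equal to the displayed element; this element lies in $A_a + A_b$, so by hereditarity $s$ and $t$ do too. Splitting uniquely along the direct sum gives $s = s_a + s_b$ and $t = t_a + t_b$ with $s_a, t_a \in A_a$ and $s_b, t_b \in A_b$ all positive, and matching components forces $s_a + t_a = (a - \eta)_+$ and $s_b + t_b = (b - \eta)_+$. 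Define $z_1 := [s_a]$ and $z_2 := [s_b]$; the five required inequalities then follow, the most substantive being $z_1 + z_2 = [s] \leq [(c - \varepsilon_0)_+] \leq z$ and $x' \leq [(a - \eta)_+] = [s_a + t_a] \leq [s_a] + [t_a] = z_1 + [t_a] \leq z_1 + w$, where $[t_a] \leq w$ because $t_a \leq t \precsim (d - \varepsilon_0)_+ \precsim d$, while $z_1 \leq [s_a + t_a] \leq x$ and the $y$-side is symmetric.

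\textbf{Main obstacle.} The conceptual step is recognizing that although neither $s$ nor $t$ is a priori located in $A_a$ or $A_b$, their sum lies in $A_a + A_b$, so hereditarity places both there, and the direct-sum structure forces the canonical decomposition that matches $(a - \eta)_+$ and $(b - \eta)_+$. Once this orthogonal splitting is in place, the remaining inequalities are straightforward order-theoretic bookkeeping in $\Cu(A)$.
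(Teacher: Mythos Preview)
Your argument contains a genuine gap at the ``main obstacle'' step: the claim that $A_a + A_b$ is a hereditary sub-\ca{} of $A\otimes\Cpct$ is false in general, and without it the decomposition $s=s_a+s_b$, $t=t_a+t_b$ need not exist. Orthogonality $A_aA_b=0$ does make $A_a+A_b$ a sub-\ca{} isomorphic to $A_a\oplus A_b$, but it does \emph{not} make it hereditary. Concretely, take $A\otimes\Cpct=M_2(\CC)$, $a=e_{11}$, $b=e_{22}$. Then $A_a+A_b$ is the diagonal subalgebra, while $0\leq \tfrac{1}{2}\big(\begin{smallmatrix}1&1\\1&1\end{smallmatrix}\big)\leq e_{11}+e_{22}$ is not diagonal. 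In fact, with $\eta=0$ one can take $s=\tfrac{1}{2}\big(\begin{smallmatrix}1&1\\1&1\end{smallmatrix}\big)$ and $t=\tfrac{1}{2}\big(\begin{smallmatrix}1&-1\\-1&1\end{smallmatrix}\big)$, so that $s+t=a+b$ yet neither $s$ nor $t$ admits a splitting $s_a+s_b$ with $s_a\in A_a$, $s_b\in A_b$. The correct hereditary sub-\ca{} containing $s$ and $t$ is $\overline{(a+b)(A\otimes\Cpct)(a+b)}$, which is strictly larger than $A_a+A_b$ whenever the ambient algebra links the two corners; inside it there is no canonical direct-sum decomposition to exploit.

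The paper's proof circumvents this by first passing to the quotient $A/I$, where $I$ is the ideal corresponding to $w$. There the condition $x+y\ll z+w$ becomes $\pi(a)+\pi(b)\precsim\pi(c)$ with no extra summand, and R{\o}rdam's lemma produces \emph{orthogonal} positive elements $e,f$ inside the single hereditary sub-\ca{} $\overline{\pi(c)(A/I)\pi(c)}$. The crucial step is then to lift the orthogonal pair $(e,f)$ to an orthogonal pair $(\tilde e,\tilde f)$ in $\overline{cAc}$ using the projectivity of $C_0((0,1])\oplus C_0((0,1])$; the classes $z_1=[\tilde e]$ and $z_2=[\tilde f]$ satisfy all five inequalities, with the $+w$ terms coming from the passage to and from the quotient. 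Your attempt to avoid the quotient-and-lift manoeuvre by splitting directly in $A$ is precisely what fails.
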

\begin{proof}
Let $A$ be a \ca, and let $x',x,y',y,z,w\in\Cu(A)$ satisfy $2w=w$ and 
\[
x+y \ll z+w, \quad
x'\ll x, \andSep
y'\ll y.
\]

We may assume that $A$ is stable.
Let $I$ be the ideal of $A$ corresponding to the ideal $\{s\in\Cu(A):s\leq w\}$ of $\Cu(A)$, and let $\pi\colon A\to A/I$ denote its quotient map.
Choose $a,b,c\in A_+$ and $\varepsilon>0$ such that $a$ and $b$ are orthogonal,
\[
x=[a], \quad
y=[b], \quad
z=[c], \quad
x'\leq[(a-\varepsilon)_+], \andSep
y'\leq[(b-\varepsilon)_+].
\]

By \cite[Proposition~3.3]{CiuRobSan10CuIdealsQuot}, two elements $e,f\in A_+$ satisfy $[e]\leq [f]+w$ if and only if $\pi(e)\precsim\pi(f)$.
Thus, 
\[
\pi (a)+\pi (b)= \pi(a+ b) \precsim \pi(c).
\]

Using R{\o}rdam's Lemma (see for example \cite[Theorem~2.30]{Thi17:CuLectureNotes}), there exists $r\in A/I$ such that
\[
(\pi(a)-\varepsilon)_+ + (\pi(b)-\varepsilon)_+
= ((\pi(a) + \pi(b))-\varepsilon)_+
= r^*r
\]
and $rr^* \in \overline{\pi(c)(A/I)\pi(c)}$.

Set
\[
e := r(\pi(a)-\varepsilon)_+r^*, \andSep
f := r(\pi(b)-\varepsilon)_+r^*.
\]

Note that $e$ and $f$ are orthogonal positive elements contained in the hereditary sub-\ca{} $\overline{\pi(c)(A/I)\pi(c)}$.
Since $\pi$ maps the hereditary sub-\ca{} $\overline{cAc}$ onto $\overline{\pi(c)(A/I)\pi(c)}$, and since the \ca{} $C_0((0,\|e\|])\oplus C_0((0,\|f\|])$ is projective (see \cite[Section~4]{EffKam86ShapeThy}), we can choose orthogonal positive elements $\tilde{e},\tilde{f}\in\overline{cAc}$ such that $\pi(\tilde{e})=e$ and $\pi(\tilde{f})=f$.
Set
\[
z_1 := [\tilde{e}], \andSep
z_2 := [\tilde{f}].
\]

Using that $\tilde{e}$ and $\tilde{f}$ are orthogonal, and that $\tilde{e}+\tilde{f}\in\overline{cAc}$, we get $z_1+z_2\leq[c]=z$.

Also note that, since $\pi(\tilde{e})=e\precsim (\pi (a)-\epsilon )_+\precsim \pi (a)$, we have $z_1\leq x+w$. Similarly, $z_2\leq y+w$.

Moreover, one also gets
\[
 r^* er= r^*r(\pi(a)-\varepsilon)_+r^*r
 =(\pi(a)-\varepsilon)^3_+\sim (\pi(a)-\varepsilon)_+,
\]
which shows that $x'\leq z_1+w$. An analogous argument proves $y'\leq z_2+w$.

It follows from \autoref{rmk:O8Reduc} that $\Cu (A)$ satisfies \axiomO{8}, as desired.
\end{proof}

One can think of \axiomO{8} as a weak form of Riesz refinement. 
In this sense, \autoref{prp:WCO5O6ImpO8} below can be seen as a $\CatCu$-version of the fact that a cancellative, algebraically ordered semigroup with Riesz decomposition has Riesz refinement.

A \CuSgp{} $S$ is said to be \emph{weakly cancellative} if for all $x,y,z\in S$ with $x+z\ll y+z$ we have $x\ll y$.
It follows from \cite[Theorem~4.3]{RorWin10ZRevisited} that the Cuntz semigroup of every stable rank one \ca{} is weakly cancellative.

\begin{prp}
\label{prp:WCO5O6ImpO8}
Let $S$ be a weakly cancellative \CuSgp{} satisfying \axiomO{5} and \axiomO{6}.
Then $S$ satisfies \axiomO{8}.
\end{prp}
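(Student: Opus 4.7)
By \autoref{rmk:O8Reduc} it suffices to produce $z_1, z_2 \in S$ with $z_1 + z_2 \leq z$, $x' \leq z_1 + w$, $y' \leq z_2 + w$, $z_1 \leq x + w$, and $z_2 \leq y + w$. The plan is to first decompose $x'$ with respect to the splitting $z + w$ using \axiomO{6}, then to extract a ``relative complement'' of the resulting $z$-piece via \axiomO{5}, and finally to deploy weak cancellation against $x + y \ll z + w$ in order to reduce the decomposition of $y'$ to one against this complement.

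Concretely, I would interpolate $x' \ll x'' \ll x$ and apply \axiomO{6} to $x'' \ll x \leq z + w$ to obtain $a \leq x, z$ and $b \leq x, w$ with $x'' \leq a + b$. Using the standard consequence of \axiomO{2} and \axiomO{4} that $x' \ll a + b$ implies $x' \leq a' + b'$ for some $a' \ll a$ and $b' \ll b$, I then arrange $x' \leq a' + b'$ with $a' \ll a$ and in particular $b' \leq w$. Applying \axiomO{5} to $a' \ll a \leq z$ (with trivial second summand) produces $c \in S$ with $a' + c \leq z \leq a + c$.

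The crucial move comes next. From $z + w \leq a + c + w \leq x + c + w$ (using $a \leq x$) together with the hypothesis $x + y \ll z + w$, I deduce $x + y \ll x + c + w$, and weak cancellation yields $y \ll c + w$. A final application of \axiomO{6} to $y' \ll y \leq c + w$ then gives $b_1 \leq y, c$ and $b_2 \leq y, w$ with $y' \leq b_1 + b_2$. Setting $z_1 := a'$ and $z_2 := b_1$, the five required inequalities follow by direct verification; the only one that is not tautological is
\[
z_1 + z_2 = a' + b_1 \leq a' + c \leq z,
\]
where the first inequality uses $b_1 \leq c$ and the second is the lower half of the \axiomO{5} output.

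The principal obstacle is the weak cancellation step: without it one cannot pass from a bound on the sum $x + y$ to a bound on $y$ alone involving only $c$ and $w$, and consequently one cannot ensure that $z_1 + z_2$ fits inside $z$ rather than merely inside $2z$. It is perhaps worth noting that $2w = w$ is never invoked in this route, suggesting that in the weakly cancellative setting with \axiomO{5} and \axiomO{6} a stronger form of \axiomO{8} without that hypothesis holds.
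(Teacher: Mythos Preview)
Your proof is correct and follows the same strategy as the paper's: apply \axiomO{6} to isolate a piece of $x$ lying under $z$, use \axiomO{5} to produce an almost-complement $c$ of that piece inside $z$, invoke weak cancellation on $x+y\ll x+c+w$ to push $y$ under $c+w$, and then apply \axiomO{6} once more to $y'\ll y\leq c+w$. Your closing observation that the hypothesis $2w=w$ is never invoked is correct and worth recording; the paper's argument does not use it either.
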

\begin{proof}
Let $x',x,y',y,z,w$ satisfy
\[
x+y \ll z+w, \quad
x'\ll x, \quad
y'\ll y, \andSep
2w=w.
\]

Applying \axiomO{6} for $x'\ll x\ll z+w$, we obtain $\tilde{z}_1$ such that
\[
    x' \ll \tilde{z}_1+w, \andSep \tilde{z}_1\ll x,z.
\]

Choose $z_1\in S$ such that
\[
    x'\ll z_1+w, \andSep z_1\ll \tilde{z}_1.
\]

Applying \axiomO{5} for $z_1\ll \tilde{z}_1\leq z$, we find $c\in S$ with
\[
z_1+c\leq z\leq \tilde{z}_1+c.
\]

Then,
\[
x+y \ll z+w 
\leq \tilde{z}_1+c+w
\]
and, since $S$ is weakly cancellative and $\tilde{z}_1\leq x$, we get $y\ll c+w$.

Applying \axiomO{6} for $y'\ll y\ll c+w$, we obtain $z_2$ such that
\[
    y' \ll z_2+w, \andSep z_2\ll y,c.
\]

We have $z_1+z_2\leq z_1+c\leq z$. It now follows from \autoref{rmk:O8Reduc} that $z_1$ and $z_2$ have the desired properties.
\end{proof}

\begin{lma}
\label{prp:O8Multiple}
Let $S$ be a \CuSgp{} satisfying \axiomO{8}, let $w\in S$ satisfy $2w=w$, and let $x_1',\ldots,x_n',x_1,\ldots,x_n,z\in S$ satisfy
\[
x_1+\ldots+x_n \ll z+w, \quad
x_1'\ll x_1, \quad \ldots, \andSep
x_n'\ll x_n.
\]

Then there exist $z_1,\ldots,z_n\in S$ such that
\[
z_1+\ldots+z_n\ll z,\quad 
x_j'\ll z_j+w,\andSep 
z_j\ll x_j+w
\]
for $j=1,\ldots,n$.
\end{lma}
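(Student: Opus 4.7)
The plan is to proceed by induction on $n$, with the base case $n=2$ being exactly \axiomO{8}. For the inductive step, the naive strategy — applying \axiomO{8} to the decomposition $(x_1+\ldots+x_{n-1})+x_n\ll z+w$ with primes $\sum_{j<n}x_j'$ and $x_n'$ — produces an auxiliary $\tilde z$ satisfying only $\sum_{j<n}x_j' \ll \tilde z + w$. This is too weak to feed into the induction hypothesis, because to recover $x_j' \ll z_j + w$ in the conclusion we need the $x_j'$'s to appear as the \emph{primes} in the recursive call, which in turn requires some elements strictly above the $x_j'$'s on the left-hand side.

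So I will first use \axiomO{2} to interpolate, choosing $x_j''\in S$ with $x_j'\ll x_j''\ll x_j$ for each $j$. Then $\sum_j x_j''\ll\sum_j x_j\ll z+w$ by \axiomO{3}. Applying \axiomO{8} to $(x_1+\ldots+x_{n-1})+x_n\ll z+w$ with the stronger primes $\sum_{j<n}x_j''$ and $x_n''$ yields $\tilde z,z_n\in S$ with
\[
\tilde z+z_n\ll z,\quad \sum_{j<n}x_j''\ll\tilde z+w,\quad x_n''\ll z_n+w,\quad z_n\ll x_n+w.
\]
Since $x_n'\ll x_n''$, this already gives $x_n'\ll z_n+w$, so $z_n$ meets the required properties for coordinate~$n$.

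Next I apply the inductive hypothesis to the tuple $(x_1'',\ldots,x_{n-1}'')$ with primes $(x_1',\ldots,x_{n-1}')$ and ambient element $\tilde z$: the hypotheses $\sum_{j<n}x_j''\ll\tilde z+w$ and $x_j'\ll x_j''$ are exactly what we just established. This produces $z_1,\ldots,z_{n-1}\in S$ with
\[
\sum_{j<n}z_j\ll\tilde z,\quad x_j'\ll z_j+w,\quad z_j\ll x_j''+w\ll x_j+w.
\]
Combining the two applications, $\sum_{j=1}^n z_j\leq\tilde z+z_n\ll z$, so $\sum_j z_j\ll z$; the remaining inequalities $x_j'\ll z_j+w$ and $z_j\ll x_j+w$ hold for all $j$, completing the induction.

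The main obstacle (and essentially the only subtlety) is the index-shifting issue highlighted in the first paragraph: one has to insert the intermediate layer $x_j''$ before invoking \axiomO{8}, so that the primes in the \axiomO{8}-step and the primes in the recursive call are properly offset. Everything else is bookkeeping using \axiomO{3} and the standard fact that $a\leq b\ll c$ implies $a\ll c$.
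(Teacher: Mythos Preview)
Your proof is correct and follows essentially the same inductive strategy as the paper (interpolate intermediate elements $x_j''$ via \axiomO{2}, then combine \axiomO{8} with the induction hypothesis); the only cosmetic difference is that you apply \axiomO{8} first to peel off the $n$-th coordinate and then recurse on the remaining $n-1$ slots, whereas the paper recurses first (grouping $x_{n-1}+x_n$ into a single slot) and applies \axiomO{8} afterwards to separate those two. One tiny slip: in the chain $z_j\ll x_j''+w\ll x_j+w$ the second $\ll$ should read $\leq$ (since $w$ need not be compact), but the desired conclusion $z_j\ll x_j+w$ still follows from $z_j\ll x_j''+w\leq x_j+w$.
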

\begin{proof}
We prove the result by induction over $n$.
The case $n=1$ is clear, and the case $n=2$ holds by definition of \axiomO{8}.

Thus, let $n\geq 3$, and assume that the statement holds for $n-1$.
Let $x_j',x_j,z,w$ for $j=1,\ldots,n$ be as in the statement, and choose $x_{n-1}'',x_n''$ such that
\[
x_{n-1}'\ll x_{n-1}''\ll x_{n-1}, \andSep
x_n' \ll x_n'' \ll x_n.
\]

Applying the assumption for the $n-1$ pairs $x_1'\ll x_1,\ldots,x_{n-2}'\ll x_{n-2}$ and $x_{n-1}''+x_n''\ll x_{n-1}+x_n$, we obtain $z_1,\ldots,z_{n-2},v$ such that
\[
z_1+\ldots+z_{n-2}+v\ll z, \quad
x_j'\ll z_j+w, \andSep
z_j\ll x_j+w
\]
for $j=1,\ldots,n-2$ and 
\[
x_{n-1}''+x_n''\ll v+w, \andSep
v\ll x_{n-1}+x_n+w.
\]

Applying \axiomO{8} for
\[
x_{n-1}''+x_n''\ll v+w, \quad
x_{n-1}'\ll x_{n-1}'', \andSep
x_n'\ll x_n'',
\]
we obtain $z_{n-1},z_n$ with $z_{n-1}+z_n\ll v$ and such that
\[
x_{n-1}'\ll z_{n-1}+w, \quad
x_n'\ll z_n+w, \quad
z_{n-1}\ll x_{n-1}''+w, \andSep
z_n\ll x_n''+w.
\]

Then $z_1,\ldots,z_n$ have the desired properties.
\end{proof}

\begin{prp}
\label{prp:O8MultipleOrdered}
Let $S$ be a \CuSgp{} satisfying \axiomO{6} and \axiomO{8}, let $w\in S$ be such that $2w=w$, and let $x_1',\ldots,x_n',x_1,\ldots,x_n,z\in S$ satisfy
\[
x_1+\ldots+x_n \ll z+w, \andSep
x_1'\ll x_1 \ll x_2' \ll x_2 \ll \ldots \ll x_n'\ll x_n.
\]

Then, there exist $z_1,\ldots,z_n\in S$ such that
\[
z_1+z_2+\ldots+z_n\ll z, \quad 
z_1\ll\ldots\ll z_n,\quad 
x_j'\ll z_j+w,\andSep 
z_j\ll x_j+w
\]
for $j=1,\ldots,n$.
\end{prp}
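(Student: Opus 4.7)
The plan is to first apply Lemma~\ref{prp:O8Multiple} to obtain auxiliary elements $z_1,\ldots,z_n\in S$ such that $\sum_j z_j\ll z$, $z_j\ll x_j+w$, and $x_j'\ll z_j+w$, but without any ordering among the $z_j$. I then refine these top-down into the desired chain using \axiomO{6}, cutting each $z_j$ down to sit strictly below $z_{j+1}$ while preserving the ``$\ll x_j+w$'' and ``$\gg x_j'$ modulo $w$'' properties.

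For the refinement, set $\tilde z_n:=z_n$ and, for $j$ going from $n-1$ down to $1$, construct $\tilde z_j$ inductively, maintaining the invariants $\tilde z_{j+1}\leq z_{j+1}$, $\tilde z_{j+1}\ll x_{j+1}+w$, and $x_{j+1}'\ll\tilde z_{j+1}+w$. The key observation is that
\[
z_j \ll x_j+w \leq x_{j+1}'+w \leq \tilde z_{j+1}+w+w = \tilde z_{j+1}+w,
\]
using the chain condition $x_j\ll x_{j+1}'$, the invariant on $\tilde z_{j+1}$, and $2w=w$. One then picks suitable approximants $\tilde z_{j+1}'\ll\tilde z_{j+1}$, $w',w''\ll w$, and $z_j''\ll z_j$ with $z_j\leq\tilde z_{j+1}'+w'$ and $x_j'\leq z_j''+w''$, and applies \axiomO{6} to $z_j''\ll z_j\leq\tilde z_{j+1}'+w'$ to obtain $e_j\leq\tilde z_{j+1}'$ and $f_j\leq w'$ with $z_j''\leq e_j+f_j$. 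Setting $\tilde z_j:=e_j$, the ordering $\tilde z_j\leq\tilde z_{j+1}'\ll\tilde z_{j+1}$ is immediate.

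The main obstacle I foresee is keeping both strict $\ll$-relations alive through the refinement: one must shrink $z_j$ enough to land strictly below $\tilde z_{j+1}$, yet retain $x_j'\ll\tilde z_j+w$. The approximant bookkeeping above is what resolves this: the chain of inequalities $x_j'\leq z_j''+w''\leq e_j+f_j+w''\leq\tilde z_j+(w'+w'')$, together with $w'+w''\ll w+w=w$ (by \axiomO{3} and $2w=w$), yields $x_j'\ll\tilde z_j+w$. The remaining conclusions $\sum_j\tilde z_j\ll z$ and $\tilde z_j\ll x_j+w$ follow from $\tilde z_j\leq z_j$, which also closes the inductive invariant and finishes the argument.
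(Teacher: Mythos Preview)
Your proposal is correct and follows essentially the same strategy as the paper's proof: first invoke Lemma~\ref{prp:O8Multiple} to get unordered $z_j$'s, then refine them top-down into a $\ll$-chain using \axiomO{6} and the hypothesis $x_j\ll x_{j+1}'$. The only difference is bookkeeping: the paper fixes approximants $y_j'\ll y_j$ with $x_j'\ll y_j'+w$ once at the outset and then runs the descent, whereas you introduce the approximants $z_j''$, $w'$, $w''$, $\tilde z_{j+1}'$ inside each inductive step; your version is in fact more explicit about how the strict $\ll$-relations survive the application of \axiomO{6}.
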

\begin{proof}
Applying \autoref{prp:O8Multiple}, we obtain $y_1,\ldots,y_n$ such that
\[
y_1+\ldots+y_n\ll z, \quad
x_j'\ll y_j+w, \andSep
y_j\ll x_j+w
\]
for $j=1,\ldots,n$.

For every $j$, let $y_j'\ll y_j$ be such that $x_j'\ll y_j'+w$. Set $z_n:=y_n$, and note that
\[
y_{n-1}'\ll y_{n-1} \leq x_{n-1}+w \leq x_n'+w \leq y_n+w = z_n+w.
\]

Applying \axiomO{6}, we obtain $z_{n-1}$ such that
\[
y_{n-1}'\ll z_{n-1}+w, \andSep z_{n-1}\ll y_{n-1},z_n,
\]
where note that one also has 
\[
 y_{n-2}'\ll y_{n-2}\leq x_{n-2}+w\leq x_{n-1}'+w\leq y_{n-1}'+w\leq z_{n-1}+w.
\]

Proceeding in this manner, we obtain elements $z_1,\ldots ,z_n$ such that
\[
 y_{j}'\ll z_{j}+w, \andSep z_{j}\ll y_{j},z_{j+1}
\]
for every $j\leq n-1$.

It is easy to see that such elements satisfy the required properties.
\end{proof}

The next result can be interpreted as the \CuSgp{} version of the projectivity of $C_0((0,1],M_n)$.

\begin{prp}
\label{prp:O8DivRef}
Let $S$ be a \CuSgp{} satisfying \axiomO{8}, let $n\geq 1$, and let $x',x,y,w\in S$ satisfy
\[
nx\ll y+w, \quad 
x'\ll x, \andSep
2w=w.
\]

Then there exists $z\in S$ such that
\[
nz \ll y, \quad
x' \ll z + w, \andSep
z \ll x+w.
\]
\end{prp}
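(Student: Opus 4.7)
The plan is to reduce to \autoref{prp:O8MultipleOrdered} applied to a carefully chosen chain of approximants of $x$, and then to extract $z$ as the smallest element of the resulting ordered refinement.

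First, using \axiomO{2} iteratively to interpolate between $x'$ and $x$, I construct a chain
\[
x' \ll a_1 \ll b_1 \ll a_2 \ll b_2 \ll \cdots \ll a_n \ll b_n \ll x
\]
in $S$. Setting $x_j' := a_j$ and $x_j := b_j$ for $j = 1, \ldots, n$ realizes the chain condition $x_1' \ll x_1 \ll x_2' \ll x_2 \ll \cdots \ll x_n' \ll x_n$ required by \autoref{prp:O8MultipleOrdered}, while iterated application of \axiomO{3} gives $x_1 + \cdots + x_n \ll n x \ll y + w$, providing the remaining hypothesis.

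Invoking \autoref{prp:O8MultipleOrdered} then produces elements $z_1 \ll z_2 \ll \cdots \ll z_n$ in $S$ satisfying $z_1 + \cdots + z_n \ll y$, together with $x_j' \ll z_j + w$ and $z_j \ll x_j + w$ for each $j$. Taking $z := z_1$, the ordering $z_1 \leq z_j$ (inferred from $z_1 \ll z_j$) yields $n z \leq z_1 + \cdots + z_n \ll y$, so $n z \ll y$; the estimate $x' \ll a_1 = x_1' \ll z_1 + w = z + w$ gives $x' \ll z + w$; and $z = z_1 \ll x_1 + w = b_1 + w \leq x + w$ gives $z \ll x + w$.

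The real content of the argument lies in \autoref{prp:O8MultipleOrdered}, where \axiomO{8} is what produces the ordered chain $z_1 \ll \cdots \ll z_n$; once this chain is available, extracting a single $z$ is merely a matter of picking its smallest member. The only technical care needed for the reduction is pre-stretching the approximants of $x$ so that all the primed--unprimed interlacings demanded by \autoref{prp:O8MultipleOrdered} are simultaneously in place.
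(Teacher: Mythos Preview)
Your argument is essentially identical to the paper's: both interpolate a chain $x' \ll x_1' \ll x_1 \ll \cdots \ll x_n' \ll x_n \ll x$, invoke \autoref{prp:O8MultipleOrdered}, and set $z := z_1$; the only cosmetic difference is that you verify $z \ll x+w$ via $z_1 \ll x_1 + w \leq x+w$, whereas the paper routes through $z_1 \leq z_n \ll x_n + w \leq x+w$. (One shared wrinkle worth flagging: \autoref{prp:O8MultipleOrdered} is stated under the extra hypothesis \axiomO{6}, which does not appear among the assumptions of the present proposition; this mismatch is already present in the paper's own proof.)
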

\begin{proof}
Choose $x_1',x_1,x_2',x_2,\ldots,x_n$ such that
\[
x'\ll x_1'\ll x_1\ll x_2'\ll x_2 \ll \ldots\ll x_n' \ll x_n \ll x.
\]

Applying \autoref{prp:O8MultipleOrdered}, we obtain $z_1,\ldots,z_n$ satisfying
\[
z_1+\ldots+z_n\ll y, \quad
z_1\ll \ldots \ll z_n, \quad
x_j'\ll z_j+w, \andSep
z_j\ll x_j+w
\]
for $j=1,\ldots,n$.

Set $z:=z_1$.
Then
\[
nz \leq z_1+\ldots+z_n \ll y, \quad
x' \leq x_1' \ll z_1+w = z+w,
\]
and $z = z_1 \leq z_n \ll x_n+w\leq x+w$.
\end{proof}

\section{C*-algebras and Cu-semigroups without elementary ideal-quotients}
\label{sec:EleIdeQuo}

In this section we prove that a \CuSgp{} $S$ satisfying \axiomO{5}, \axiomO{6} and \axiomO{8} has no nonzero elementary ideal-quotients if and only if $S$ is weakly $(2,\omega )$-divisible; 
see \autoref{prp:IdealQuotient}. 
In \autoref{pgr:Elementary} we provide a tailored definition of \emph{elementary \CuSgp{}}, which is justified by \autoref{prp:CharElementary}. 
We deduce in \autoref{prp:charDiv} that a \ca{} is nowhere scattered if and only if every element in its Cuntz semigroup is weakly $(2,\omega)$-divisible.
This can also be deduced from results in \cite{RobRor13Divisibility};
see \autoref{rmk:RobRor}.

\begin{pgr}\label{pgr:Elementary}
A \ca{} is said to be \emph{elementary} if it is isomorphic to the compact operators on some Hilbert space.

In \cite[Paragraph~5.1.16]{AntPerThi18TensorProdCu}, a nonzero \CuSgp{} was said to be `elementary' if it is simple and contains a minimal nonzero element.
This definition includes $\{0,\infty\}$, which is the Cuntz semigroup of simple, purely infinite \ca{s} -- and these \ca{s} are very far from elementary.
To amend this, we will instead say that a nonzero \CuSgp{} is \emph{elementary} if it is simple and contains a minimal, nonzero element $x$ that is \emph{finite} (that is, $x\neq 2x$).
\autoref{prp:CharElementary} below shows that this refined definition fits the established terminology in \ca{s}.
\end{pgr}

The next result is shown in \cite[Theorem~4.4.4]{Eng14PhD}.
We include a proof for the convenience of the reader.

\begin{lma}
\label{prp:CharElementary}
Let $A$ be a (nonzero) \ca.
Then the following are equivalent:
\begin{enumerate}
\item
$A$ is elementary;
\item
$\Cu(A)\cong\overline{\NN}$;
\item
$\Cu(A)$ is elementary.
\end{enumerate}
\end{lma}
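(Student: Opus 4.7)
I will establish the cycle (1) $\Rightarrow$ (2) $\Rightarrow$ (3) $\Rightarrow$ (1), the first two being routine. For (1) $\Rightarrow$ (2), the Cuntz class of a positive compact operator on a Hilbert space is determined by the rank of its range projection (a nonnegative integer or $\infty$), and orthogonal sums add ranks, so a direct computation gives $\Cu(\Cpct(H))\cong\overline{\NN}$. For (2) $\Rightarrow$ (3), observe that $\overline{\NN}$ is simple (every nonzero element generates it as an order ideal, since $\sup_n n=\infty$) and that $1$ is a minimal nonzero element which is finite.

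For (3) $\Rightarrow$ (1), assume $\Cu(A)$ is elementary and let $x$ be a minimal, nonzero, finite element. By the ideal correspondence, $A$ is simple. Applying \axiomO{2}, I write $x=\sup_n x_n$ for a $\ll$-increasing sequence; each $x_n\leq x$ is $0$ or $x$ by minimality, and since $\sup_n x_n\neq 0$ some $x_n$ equals $x$, so $x\ll x$. Now choose $a\in(A\otimes\Cpct)_+$ with $[a]=x$ and set $B=\overline{a(A\otimes\Cpct)a}$; minimality of $x$ forces every nonzero $b\in B_+$ to satisfy $[b]=x$.

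The crux is to show $B\cong\CC$. If $c\in B$ is self-adjoint with $c_+$ and $c_-$ both nonzero, then as orthogonal positive elements of $B$ they give $[|c|]=[c_+]+[c_-]=2x$; but $|c|\in B_+\setminus\{0\}$ also satisfies $[|c|]=x$, contradicting $x\neq 2x$. Hence every self-adjoint element of $B$ is positive or negative. Applied to $f(a)$ for continuous $f\colon[0,\|a\|]\to\mathbb{R}$ with $f(0)=0$, this forces $f\geq 0$ or $f\leq 0$ on $\mathrm{sp}(a)$; if $\mathrm{sp}(a)\setminus\{0\}$ contained two distinct points, a continuous $f$ separating them with $f(0)=0$ would violate this, so $\mathrm{sp}(a)\setminus\{0\}=\{\lambda\}$. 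Thus $a$ is a scalar multiple of a projection $p$, whence $B=p(A\otimes\Cpct)p$. Repeating the spectral argument inside $B$ shows every positive element of $B$ is a scalar multiple of a projection; for a nonzero projection $e\leq p$ in $B$, minimality gives $[e]=x$, and if $e\neq p$ then the nonzero subprojection $p-e\leq p$ satisfies $[p-e]=x$ as well, yielding $2x=[e]+[p-e]=[p]=x$, a contradiction. Hence $B=\CC p$.

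Finally, $p$ is a minimal open projection in $A\otimes\Cpct$, and simplicity of $A\otimes\Cpct$ together with the standard Morita equivalence construction (identifying $(A\otimes\Cpct)p$ with a Hilbert space $H$ on which $A\otimes\Cpct$ acts faithfully by left multiplication) yields $A\otimes\Cpct\cong\Cpct(H)$. Hence $A$ is Morita equivalent to $\CC$ and, being simple, is itself elementary. The main obstacle is the key step $B\cong\CC$; the finiteness of $x$ is essential, since simple purely infinite \ca{s} such as $\mathcal{O}_2$ also have every nonzero positive element Cuntz equivalent, but there the common class satisfies $2x=x$, making the above orthogonality contradiction evaporate.
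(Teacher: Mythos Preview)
Your proof is correct. The implications (1)$\Rightarrow$(2)$\Rightarrow$(3) match the paper's treatment, but your argument for (3)$\Rightarrow$(1) is genuinely different and more elementary. The paper invokes \cite[Theorem~5.8]{BroCiu09IsoHilbModSF} to realize the minimal compact element $x$ as the class of a projection, and then asserts (with a quick ``well-known'') that this projection is a minimal open projection. You bypass that external result entirely: starting from an arbitrary representative $a$ with $[a]=x$, you exploit the finiteness $x\neq 2x$ directly via the orthogonal decomposition $|c|=c_++c_-$ to force every self-adjoint element of $B=\overline{a(A\otimes\Cpct)a}$ to be positive or negative, and then a short spectral argument collapses $B$ to a single line. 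This makes the role of finiteness completely transparent (as you note, the argument evaporates for $\mathcal{O}_2$), and it is self-contained rather than relying on the Brown--Ciuperca machinery. The trade-off is that the paper's route is shorter once one is willing to cite that reference, whereas your approach does more work by hand but needs nothing beyond basic Cuntz semigroup facts and functional calculus.
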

\begin{proof}
To show that~(1) implies~(2), assume that $A$ is elementary.
Upon stabilization, we may assume that $A\cong\mathcal{K}(H)$ for some infinite-dimensional Hilbert space~$H$.
Using the spectral theorem for compact operators, one can show that $A_+\to\overline{\NN}$, mapping $a\in A_+$ to its rank, induces the desired isomorphism $\Cu(A)\cong\overline{\NN}$.

Alternatively, one sees that every projection in $\mathcal{K}(H)$ has finite-rank, and that two projections are Murray-von Neumann equivalent if and only their ranks agree.
It follows that the Murray-von Neumann semigroup $V(A)$ is isomorphic to $\NN$.
Using that $A$ has real rank zero, we obtain that $\Cu(A)$ is isomorphic to the sequential ideal completion of $\NN$, and thus $\Cu(A)\cong\overline{\NN}$;
see for example \cite[Remark 5.5.6]{AntPerThi18TensorProdCu}.

It is clear that~(2) implies~(3).
To show that~(3) implies~(1), assume that $\Cu(A)$ is elementary.
Then $\Cu(A)$ is simple, and consequently so is $A$ (see \cite[Corollary~5.1.12]{AntPerThi18TensorProdCu}).
Choose a minimal, nonzero element $x\in\Cu(A)$ with $x\neq 2x$, and let $a\in A_+$ be a  nonzero element.

Since $x$ is minimal, we have $x\ll x$.
Using that $A$ is simple, it follows from  \cite[Theorem~5.8]{BroCiu09IsoHilbModSF} that we can choose a projection $p\in A\otimes\mathcal{K}$ with $[p]=x$.
Since $a\neq 0$, we have $[a]\neq 0$ and therefore $x\leq[a]$.
Then $p\precsim a$, whence we obtain a projection $q\in A$ with $[q]=x$.
Thus, $q$ is a minimal open projection, that is, $qAq=\CC q$.
This is well-known to imply that $A$ is elementary.
\end{proof}

Given a \CuSgp{} $S$, a downward-hereditary submonoid $I\subseteq S$ is an \emph{ideal} if it is closed under suprema of increasing sequences. For each ideal $I$ of $S$, one can consider its corresponding quotient $S/I$, which is again a \CuSgp{}; see \cite[Lemma 5.1.2]{AntPerThi18TensorProdCu}.

We omit the proof of the following lemma.

\begin{lma}
\label{prp:O8Quot}
Let $S$ be a \CuSgp{} satisfying \axiomO{8} and let $I$ be an ideal of $S$. 
Then, $I$ and $S/I$ satisfy \axiomO{8}.
\end{lma}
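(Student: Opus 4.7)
For the ideal $I$, the argument will be short: the way-below relation on $I$ is simply the restriction of the way-below relation on $S$ (since $I$ is closed under suprema of increasing sequences in $S$). Hence the hypotheses of \axiomO{8} for elements $x',x,y',y,z,w\in I$ are equally valid in $S$, the axiom applied in $S$ produces $z_1,z_2\in S$ with the required properties, and the relations $z_1\ll x+w$ and $z_2\ll y+w$ together with downward-hereditarity of $I$ force $z_1,z_2\in I$.

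For $S/I$ the plan is to lift every piece of data to $S$, apply \axiomO{8} there, and then project. Using \autoref{rmk:O8Reduc}, it suffices to produce the $\leq$-version of the conclusion. Given $[x'],[x],[y'],[y],[z],[w]\in S/I$ satisfying the hypothesis, I would first arrange a convenient lift of $[w]$: taking any lift $w_0$, the element $w:=\sup_n 2^n w_0$ satisfies $2w=w$ in $S$ and still has $\pi(w)=[w]$ since $2^n[w]=[w]$ for all~$n$. Then I choose arbitrary lifts $x,y,z\in S$ of the remaining classes, write $x=\sup_n x_n$ and $y=\sup_n y_n$ with $x_n\ll x_{n+1}$, $y_n\ll y_{n+1}$, and use $[x']\ll[x]=\sup_n[x_n]$ and $[y']\ll[y]$ to pick a common index~$N$ with $[x']\leq[x_N]$ and $[y']\leq[y_N]$.

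The key step is to turn the quotient-level relation $[x_{N+2}+y_{N+2}]\ll[z+w]$ into a genuine way-below relation in $S$ against a lift of $[z+w]$ of the form $\tilde z+w$. Writing $z+w=\sup_m u_m$ with $u_m\ll u_{m+1}$, I find $M$ such that $[x_{N+2}+y_{N+2}]\leq[u_M]$, whence $x_{N+2}+y_{N+2}\leq u_M+t$ for some $t\in I$. Setting $\tilde z:=z+t$ gives $\pi(\tilde z)=[z]$ and $x_{N+2}+y_{N+2}\leq\tilde z+w$ in $S$, so by interpolation $x_{N+1}+y_{N+1}\ll\tilde z+w$ in $S$. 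This absorption of $t\in I$ into the $z$-slot is the main technical obstacle, and it is precisely where the freedom to choose the representative $\tilde z$ matters.

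Finally, \axiomO{8} in $S$ applied to $x_N\ll x_{N+1}$, $y_N\ll y_{N+1}$, $x_{N+1}+y_{N+1}\ll\tilde z+w$, $2w=w$ yields $z_1,z_2\in S$ with $z_1+z_2\ll\tilde z$, $x_N\ll z_1+w$, $y_N\ll z_2+w$, $z_1\ll x_{N+1}+w$, and $z_2\ll y_{N+1}+w$. Projecting to $S/I$ and using $[x']\leq[x_N]$, $[y']\leq[y_N]$, $[x_{N+1}]\leq[x]$, $[y_{N+1}]\leq[y]$, $[\tilde z]=[z]$ gives the $\leq$-form of \axiomO{8} for $[z_1],[z_2]\in S/I$, and \autoref{rmk:O8Reduc} then upgrades this to \axiomO{8} itself.
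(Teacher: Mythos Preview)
The paper omits the proof of this lemma, so there is nothing to compare against; your argument is correct and would serve as a complete proof. One small remark: in the ideal case, the parenthetical ``since $I$ is closed under suprema'' only justifies that $\ll_S$ restricted to $I$ implies $\ll_I$; the converse (which you also need) uses downward-hereditarity together with \axiomO{2}, but this is standard and your conclusion stands.
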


\begin{lma}
\label{prp:LiftDivO8}
Let $S$ be a \CuSgp{} satisfying \axiomO{5} and \axiomO{8}. 
Let $I$ be an ideal of $S$ and denote by $\pi$ the canonical map $S\to S/I$. 
Given $y\in S$ and $e,e'\in S/I$ such that 
\[
e'\ll e,\andSep 
2e\leq \pi (y),
\] 
there exists $z\in S$ such that $e'\ll \pi (z)\ll e$ and $2z\ll y$.
\end{lma}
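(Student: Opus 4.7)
The strategy is to lift the hypothesis $2e\leq\pi(y)$ in $S/I$ to an inequality of the form $2a^{**}\ll y+w$ in $S$, with $w\in I$ satisfying $2w=w$, and then invoke \autoref{prp:O8DivRef} with $n=2$ to produce a single $z\in S$ with $2z\ll y$ and $\pi(z)$ strictly sandwiched between $e'$ and $e$. The two crucial ingredients are (i) inserting enough interpolation layers so that $e'$ remains strictly below the chosen lift of $e$ and so that the input to \autoref{prp:O8DivRef} is a genuine $\ll$ rather than a mere $\leq$; and (ii) replacing the obstructing ideal element $v\in I$ that the quotient-order produces by its ``absorbing hull'' $w:=\sup_k kv$, which lies in $I$ and satisfies $2w=w$ by \axiomO{4}, thereby unlocking the \axiomO{8}-based machinery of \autoref{prp:O8DivRef}.

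Concretely, I pick a lift $a\in S$ with $\pi(a)=e$ using surjectivity of $\pi$, and write $a=\sup_n a_n$ with $a_n\ll a_{n+1}$ via \axiomO{2}. Interpolating $e'\ll e''\ll e$ in $S/I$ and using $e''\ll e=\sup_n \pi(a_n)$, I find $n$ with $e''\leq \pi(a_n)$, and set $\tilde{a}:=a_n$, $a^{**}:=a_{n+1}$, $a^{*}:=a_{n+2}$. Then $\tilde{a}\ll a^{**}\ll a^{*}\ll a$ in $S$ and $e'\ll \pi(\tilde{a})$. Since $2a^{*}\ll 2a$ by \axiomO{3} and $\pi(2a)\leq \pi(y)$, the definition of the quotient order yields some $v\in I$ with $2a^{*}\leq y+v$. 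With $w$ as above, we get $v\leq w$ and hence $2a^{*}\leq y+w$; combined with $2a^{**}\ll 2a^{*}$ this upgrades to $2a^{**}\ll y+w$.

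Now \autoref{prp:O8DivRef}, applied with $n=2$, $x=a^{**}$, $x'=\tilde{a}$, and our $y$ and $w$, produces $z\in S$ satisfying $2z\ll y$, $\tilde{a}\ll z+w$, and $z\ll a^{**}+w$. Applying $\pi$ and using that $\pi(w)=0$ since $w\in I$, the last relation becomes $\pi(z)\ll \pi(a^{**})\leq e$, hence $\pi(z)\ll e$; and the middle relation becomes $\pi(\tilde{a})\ll \pi(z)$, which combined with $e'\ll \pi(\tilde{a})$ gives $e'\ll \pi(z)$. Thus $z$ has all the desired properties, and the main bookkeeping difficulty—keeping way-below relations alive through the passage from $S/I$ back to $S$ and through the refinements demanded by \autoref{prp:O8DivRef}—is resolved by the combined use of the interpolants $\tilde{a}\ll a^{**}\ll a^{*}\ll a$ and the absorption $w=\sup_k kv$.
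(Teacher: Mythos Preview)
Your proof is correct and follows essentially the same strategy as the paper: lift $e$ to $S$, use the quotient order to obtain an ideal element $v\in I$, pass to its absorbing hull $w=\sup_k kv=\infty v$ with $2w=w$, apply \autoref{prp:O8DivRef} with $n=2$, and push the resulting relations back down to $S/I$. Your version is in fact slightly more careful than the paper's in inserting the extra interpolants $\tilde a\ll a^{**}\ll a^{*}\ll a$ so that the input to \autoref{prp:O8DivRef} is a genuine $\ll$ rather than merely $\leq$.
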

\begin{proof}
Let $x\in S$ be such that $\pi (x)=e$ and take $x'\ll x$ such that $e'\ll \pi (x')$. 
Since $\pi (2x)\leq \pi (y)$, there exists $w\in I$ satisfying $2x\leq y+w$. 
Thus, we have $2x\leq y+\infty w$, where we note that $\infty w\in I$.
 
Applying \autoref{prp:O8DivRef}, we obtain an element $z\in S$ such that 
 \[
2z\ll y, \quad 
x'\ll z+\infty w, \andSep
z \ll x+\infty w.
\]

Passing to the quotient, and using that $\infty w\in I$, we get 
\[
 e'\ll \pi (x')\ll \pi (z)\ll \pi (x)=e,
\]
as required.
\end{proof}

By an \emph{ideal-quotient} in a \CuSgp{} $S$ we mean a quotient $J/I$ for some ideals $I\subseteq J$ of $S$.

\begin{lma}
\label{prp:GlimmO8}
Let $S$ be a \CuSgp{} satisfying \axiomO{5}, \axiomO{6} and \axiomO{8}.
Assume that $S$ has no nonzero elementary ideal-quotients. 
Let $x\in S$ be nonzero.
Then there exists $z\in S$ with $0\neq 2z\leq x$.
\end{lma}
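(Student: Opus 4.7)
The plan is to argue by contradiction: assume $x$ is nonzero and no $z \in S$ satisfies $0 \ne 2z \le x$, and derive a contradiction by producing either an elementary ideal-quotient of $S$ or a $z$ violating the assumption directly. First, I would apply Zorn's lemma to the poset of ideals of $S$ not containing $x$ (a fixed nonzero $x_1 \ll x$ is convenient for handling closure under sequential suprema when forming the upper bound of a chain of ideals) to obtain a maximal such ideal $I_0$. Let $J$ be the ideal of $S$ generated by $I_0$ together with $x$. Maximality of $I_0$ ensures that $J/I_0$ is simple, and by \autoref{prp:O8Quot} together with standard quotient properties, $J/I_0$ inherits \axiomO{5}, \axiomO{6}, and \axiomO{8}. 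The image $\bar{x}$ of $x$ in $J/I_0$ is nonzero.

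By \autoref{prp:LiftDivO8}, any $\bar{z} \in J/I_0$ with $0 \ne 2\bar{z} \le \bar{x}$ would lift (upon choosing a nonzero $\bar{z}' \ll \bar{z}$) to a $z \in S$ with $0 \ne 2z \le x$, contradicting our assumption. Taking $\bar{z} = \bar{x}$ gives $2\bar{x} \not\le \bar{x}$, so $\bar{x}$ is finite. Now two cases arise. If $\bar{x}$ is minimal in $J/I_0 \setminus \{0\}$, then $J/I_0$ is a simple \CuSgp{} with a minimal, finite, nonzero element, hence an elementary ideal-quotient of $S$, contradicting the hypothesis. Otherwise there exists $\bar{y} \in J/I_0$ with $0 < \bar{y} < \bar{x}$; applying \axiomO{5} to a nonzero $\bar{y}' \ll \bar{y}$ produces $c \in J/I_0$ with $\bar{y}' + c \le \bar{x} \le \bar{y} + c$, and the strict inequality $\bar{y} < \bar{x}$ forces $c \ne 0$.

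In this latter case, I would construct a nonzero $\bar{z} \in J/I_0$ with $2\bar{z} \le \bar{x}$ by induction on a parameter $m$ measuring how $\bar{y}'$ sits in the ideal generated by $c$. By simplicity of $J/I_0$, any nonzero $\bar{y}'' \ll \bar{y}'$ satisfies $\bar{y}'' \le mc$ for some smallest $m \ge 1$. If $m = 1$, then $2\bar{y}'' \le \bar{y}'' + c \le \bar{y}' + c \le \bar{x}$ and $\bar{z} := \bar{y}''$ works. For $m \ge 2$, applying \axiomO{6} to $\bar{y}''' \ll \bar{y}'' \le c + (m-1)c$ produces $e, f$ with $\bar{y}''' \le e + f$, $e \le c$ and $e \le \bar{y}''$, and $f \le (m-1)c$: if $e \ne 0$ then $2e \le e + c \le \bar{y}' + c \le \bar{x}$ gives $\bar{z} = e$; if $e = 0$ then $\bar{y}''' \le (m-1)c$ invokes the inductive hypothesis on the smaller $m$. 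Lifting $\bar{z}$ via \autoref{prp:LiftDivO8} yields $z \in S$ with $0 \ne 2z \le x$, the desired contradiction. The hardest parts will be executing the Zorn step correctly (the closure-under-sequential-suprema condition for ideals in a \CuSgp{} needs care) and maintaining a consistent supply of nonzero $\ll$-predecessors throughout the induction, particularly when transitioning from $\bar{y}''$ to $\bar{y}'''$ in the reduction step.
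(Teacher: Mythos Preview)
Your endgame is essentially correct and mirrors the paper's argument closely: the dichotomy between $\bar{x}$ being minimal (hence elementary quotient) or not, the \axiomO{5}-step producing a nonzero $c$, the \axiomO{6}-induction on the least $m$ with $\bar{y}''\leq mc$, and the lift via \autoref{prp:LiftDivO8} are all sound. The genuine gap is the Zorn step, and the fix you suggest does not work.

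In a \CuSgp{}, the supremum of a chain of ideals $(I_\alpha)$ is $I_\infty=\{s: s'\in\bigcup_\alpha I_\alpha \text{ for every } s'\ll s\}$. Thus $x\in I_\infty$ exactly when every $x'\ll x$ lies in some $I_\alpha$, and this can happen even if no single $I_\alpha$ contains $x$. Fixing a nonzero $x_1\ll x$ does not help: from $x\in I_\infty$ you only get $x_1\in I_{\alpha_0}$ for some $\alpha_0$, which does not contradict $x\notin I_{\alpha_0}$; and running Zorn on ideals avoiding $x_1$ instead hits the identical obstruction one level down. Concretely, in $\mathrm{Lsc}((0,1],\overline{\NN})$ the ideals correspond to open subsets of $(0,1]$; the chain corresponding to $(1/n,1]$ consists of ideals not containing the constant function $1$, yet its supremum is the whole semigroup. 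What makes Zorn succeed is \emph{compactness}: if $\bar{x}\ll\bar{x}$ in the ambient quotient, then $\bar{x}\in I_\infty$ forces $\bar{x}$ into some $I_\alpha$.

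The paper engineers this. It first applies \axiomO{5} to nonzero $x''\ll x'\ll x$ to get $c$ with $x''+c\leq x\leq x'+c$, and lets $I$ be the ideal generated by $c$. If $x\in I$ (Case~1), your \axiomO{6}-induction runs directly in $S$, with no quotient needed. If $x\notin I$ (Case~2), then in $K/I$ (with $K$ the ideal generated by $x$) the inequality $x\leq x'+c$ gives $\bar{x}\leq\bar{x'}\ll\bar{x}$, so $\bar{x}$ is compact and full; now Zorn legitimately produces a maximal ideal $J$, and in the simple quotient $K/J$ one finds the desired $\bar{z}$ (the paper cites \cite[Proposition~5.2.1]{Rob13Cone}, but your direct argument would serve equally well). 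The missing idea in your proposal is precisely this preliminary \axiomO{5}-move that either finishes outright or forces compactness before Zorn is invoked.
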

\begin{proof}
Using that $x$ is nonzero, we can choose nonzero elements $x'',x'\in S$ such that $x''\ll x'\ll x$.

Applying \axiomO{5}, we obtain $c\in S$ with
\[
x''+c\leq x\leq x'+c.
\]

Let $I\subseteq S$ be the ideal generated by $c$. Then, $x$ is either in $I$ or not. We study each case separately.

\textbf{Case~1:} We have $x\in I$.
In this case, and since ideals are downward hereditary, we have $x''\in I$, and therefore $x''\ll \infty c$. Thus, there exists $n\in \NN$ such that $x''\leq nc$.

Let $x'''\ll x''$ be a nonzero element. 
Following the proof of \cite[Proposition~5.2.1]{Rob13Cone}, we can apply \axiomO{6} to $x'''\ll x''\leq nc$ to obtain elements $c_1,\ldots ,c_n\leq x'',c$ such that
\[
 x'''\leq c_1+\ldots +c_n.
\]

Using that $x'''\neq 0$, it follows that there exists some $j$ such that the element $c_j$ is nonzero. Setting $z=c_j$, one has $2z\leq x''+c\leq x$, as required.

\textbf{Case~2:} We have $x\notin I$. 
Let $K\subseteq S$ denote the ideal generated by $x$.
Then the image of $x$ in $K/I$ is a nonzero, compact, full element.

It follows that there exists a maximal ideal $J\subseteq K$ containing $I$.
Consequently, the quotient $K/J$ is simple.
Let $\pi\colon K\to K/J$ be the quotient map. 

By \cite[Proposition~5.1.3]{AntPerThi18TensorProdCu}, \axiomO{5} and \axiomO{6} pass to ideals and quotients.
Thus, $K/J$ is a simple, nonelementary \CuSgp{} satisfying \axiomO{5} and \axiomO{6}.
Applying \cite[Proposition~5.2.1]{Rob13Cone}, we obtain $e\in K/J$ with $0\neq 2e\leq\pi(x)$.
Choose $e'\in K/J$ with $0\neq e'\ll e$.
By Lemmas \ref{prp:O8Quot} and \ref{prp:LiftDivO8}, we obtain $z\in K\subseteq S$ such that $e'\ll\pi(z)$ and $2z\leq x$.
It follows that $z$ is nonzero and has the desired properties.
\end{proof}

\begin{pgr}
Let $S$ be a \CuSgp{} and let $k\geq 1$. An element $x\in S$ is said to be \emph{$(k,\omega)$-divisible} if for every $x'\in S$ satisfying $x'\ll x$ there exist $n\in\NN$ and $y\in S$ such that $ky\leq x$ and $x'\leq ny$.
Further, $x$ is said to be \emph{weakly $(k,\omega)$-divisible} if for every $x'\in S$ satisfying $x'\ll x$ there exist $n\in\NN$ and $y_1,\ldots,y_n\in\Cu(A)$ such that $ky_1,\ldots,ky_n\leq x$ and $x'\leq y_1+\ldots+y_n$;
see \cite[Definition~5.1]{RobRor13Divisibility} or \cite[Paragraph~5.1]{AntPerRobThi22CuntzSR1}.

We say that $S$ is (weakly) $(k,\omega)$-divisible if each of its elements is.
\end{pgr}

\begin{lma}
\label{prp:2DivImplKDiv}
Let $S$ be a weakly $(2,\omega)$-divisible \CuSgp{}.
Then, $S$ is weakly $(k,\omega)$-divisible for every $k\in\NN$.
\end{lma}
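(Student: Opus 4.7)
The plan is to prove, by induction on $k$, a doubling statement: if every element of a \CuSgp{} $S$ is weakly $(\ell,\omega)$-divisible (for some $\ell\geq 2$), then every element is weakly $(2\ell,\omega)$-divisible. Starting from $\ell=2$, this iterates to weak $(2^n,\omega)$-divisibility for every $n\geq 1$. For an arbitrary $k\geq 2$, choosing $n$ with $2^n\geq k$ suffices, since weak $(\ell,\omega)$-divisibility trivially implies weak $(k,\omega)$-divisibility whenever $k\leq\ell$ (any $z$ with $\ell z\leq x$ also satisfies $kz\leq\ell z\leq x$). The case $k=1$ is trivial, taking $y=x$.

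For the doubling step, fix $x\in S$ and $x'\ll x$, and pick $x''$ with $x'\ll x''\ll x$. Weak $(2,\omega)$-divisibility of $x$ applied at $x''\ll x$ produces $y_1,\ldots,y_n\in S$ with $2y_i\leq x$ and $x''\leq y_1+\ldots+y_n$. Using \axiomO{2} to write each $y_i$ as the supremum of a $\ll$-increasing sequence, and \axiomO{4} to commute this supremum with the finite sum, the relation $x'\ll x''\leq y_1+\ldots+y_n$ yields elements $y_1',\ldots,y_n'$ with $y_i'\ll y_i$ and $x'\leq y_1'+\ldots+y_n'$.

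Finally, weak $(\ell,\omega)$-divisibility applied to each $y_i$ at $y_i'\ll y_i$ produces elements $z_{i,1},\ldots,z_{i,m_i}$ with $\ell z_{i,j}\leq y_i$ and $y_i'\leq z_{i,1}+\ldots+z_{i,m_i}$. Combining, $2\ell z_{i,j}\leq 2y_i\leq x$ for all $i,j$, while $x'\leq\sum_i y_i'\leq\sum_{i,j}z_{i,j}$, witnessing weak $(2\ell,\omega)$-divisibility of $x$. No step is especially subtle; the only point worth flagging is the refinement producing $y_i'\ll y_i$, which is the standard device of promoting a finite decomposition via \axiomO{2} and \axiomO{4} into one by elements way-below their counterparts.
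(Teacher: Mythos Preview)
Your argument is correct and is essentially the same as the paper's: both prove weak $(2^n,\omega)$-divisibility by induction, first applying weak $(2,\omega)$-divisibility to $x$ and then the inductive hypothesis to each resulting $y_i$, with the intermediate $x'\ll x''\ll x$ used to pass to $y_i'\ll y_i$. The only cosmetic difference is that you phrase the inductive step as ``$\ell\Rightarrow 2\ell$'' while the paper writes it as ``$2^k\Rightarrow 2^{k+1}$''.
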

\begin{proof}
It suffices to show that every element in $S$ is weakly $(2^k,\omega)$-divisible for each $k\geq 1$.
We prove this by induction over $k$, where we note that the case $k=1$ holds by assumption.

Thus, let $k\geq 1$ and assume that every element is weakly $(2^k,\omega)$-divisible.
To verify that every element is weakly $(2^{k+1},\omega)$-divisible, let $x',x\in S$ satisfy $x'\ll x$.
Choose $x''\in S$ such that $x'\ll x''\ll x$.
Using that $x$ is weakly $(2,\omega)$-divisible, we obtain $m\in\NN$ and $y_1,\ldots,y_m\in S$ such that
\[
2y_1,\ldots,2y_m \leq x, \andSep x''\leq y_1+\ldots+y_m.
\]

Choose $y_1',\ldots,y_m'\in S$ such that
\[
y_1'\ll y_1, \quad\ldots, \quad
y_m'\ll y_m, \andSep
x'\leq y_1'+\ldots+y_m'.
\]

Applying the induction assumption for each pair $y_i'\ll y_i$, we obtain $n(i)\in\NN$ and $z_{i,1},\ldots,z_{i,n(i)}\in S$ such that
\[
2^k z_{i,1}, \ldots, 2^k z_{i,n(i)} \leq y_i, \andSep y_i'\leq z_{i,1}+\ldots+z_{i,n(i)}. 
\]

Consequently, 
\[
2^{k+1} z_{i,j} \leq 2y_i \leq x,
\]
for each $i\in\{1,\ldots,m\}$ and $j\in\{1,\ldots,n(i)\}$.
Further, 
\[
x' 
\leq y_1'+\ldots+y_m'
= \sum_{i=1}^m y_i'
\leq \sum_{i=1}^m \sum_{j=1}^{n(i)} z_{i,j},
\]
which shows that the elements $z_{i,j}$ have the desired properties.
\end{proof}

\begin{prp}
\label{prp:IdealQuotient}
Let $S$ be a \CuSgp{} satisfying \axiomO{5}, \axiomO{6} and \axiomO{8}.
Then the following are equivalent:
\begin{enumerate}
\item
$S$ has no nonzero elementary ideal-quotients;
\item
every element in $S$ is weakly $(2,\omega)$-divisible;
\item
every element in $S$ is weakly $(k,\omega)$-divisible for every $k\geq 2$.
\end{enumerate}
\end{prp}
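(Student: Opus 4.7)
The implication (3)$\Rightarrow$(2) is trivial, and (2)$\Rightarrow$(3) is \autoref{prp:2DivImplKDiv}, so the task reduces to proving (1)$\Leftrightarrow$(2).

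For (2)$\Rightarrow$(1) I plan to argue by contrapositive. First, weak $(2,\omega)$-divisibility passes to ideals (the witnessing $y_j$ sit below $s$) and to quotients: given $\bar u\ll\pi(s)$ in $S/I$, one interpolates $\bar u\ll\bar v\ll\pi(s)$, uses a $\ll$-increasing sequence $s=\sup_k s_k$ to obtain $\bar v\leq\pi(s_k)$ (hence $\bar u\ll\pi(s_k)$) for some $k$, applies the weak $(2,\omega)$-divisibility of $s$ to $s_k\ll s$, and pushes the resulting decomposition through $\pi$. It therefore suffices to show that no elementary \CuSgp{} is weakly $(2,\omega)$-divisible: if $x$ is its minimal nonzero finite element, \axiomO{2} combined with minimality forces $x\ll x$; applying the divisibility property to $x\ll x$ would produce $y_1,\ldots,y_n$ with $2y_j\leq x$ and $x\leq\sum_j y_j$, while minimality constrains each $y_j\in\{0,x\}$. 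The choice $y_j=x$ gives $2x\leq x$ and contradicts finiteness, so all $y_j=0$, contradicting $x\neq 0$.

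For (1)$\Rightarrow$(2) the core idea is to fix $x\in S$ and introduce the set
\[
A:=\{s\in S : \text{for every } s'\ll s \text{ there exist } y_1,\ldots,y_n\in S \text{ with } 2y_j\leq x \text{ and } s'\leq \textstyle\sum_j y_j\}.
\]
I would verify that $A$ is an ideal of $S$. Downward heritability follows at once from the fact that $s'\ll t\leq s$ entails $s'\ll s$; closure under suprema of increasing sequences uses \axiomO{2} to interpolate $s'\ll s''\ll\sup s_k$ together with the observation that $s''\leq s_k$ for some $k$, whence $s'\ll s_k$; closure under addition relies on the standard consequence of \axiomO{2} that $t\ll s_1+s_2$ admits $t\leq e_1+e_2$ with $e_i\ll s_i$ (via $\ll$-increasing approximations of $s_1,s_2$), so that decompositions coming from $s_1,s_2\in A$ concatenate. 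Moreover, every $z\in S$ with $2z\leq x$ belongs to $A$ (take $y_1:=z$).

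To conclude, I would suppose $x\notin A$ and derive a contradiction through the quotient map $\pi\colon S\to S/A$. Since \axiomO{5} and \axiomO{6} are preserved by quotients (by \cite[Proposition~5.1.3]{AntPerThi18TensorProdCu}) and \axiomO{8} is preserved by \autoref{prp:O8Quot}, the semigroup $S/A$ satisfies all three axioms; moreover, its ideal-quotients are ideal-quotients of $S$, and hence no nonzero one is elementary. As $\pi(x)\neq 0$, \autoref{prp:GlimmO8} applied in $S/A$ yields $\bar z\in S/A$ with $0\neq 2\bar z\leq\pi(x)$, and \axiomO{2} supplies $\bar z'\in S/A$ with $0\neq\bar z'\ll\bar z$. \autoref{prp:LiftDivO8} then provides $z\in S$ with $\bar z'\ll\pi(z)$ and $2z\ll x$; but $2z\leq x$ forces $z\in A$, hence $\pi(z)=0$, contradicting $\pi(z)\geq\bar z'\neq 0$. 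Thus $x\in A$, i.e., $x$ is weakly $(2,\omega)$-divisible. The main obstacle is the verification that $A$ is an ideal---in particular, closure under addition, which hinges on the standard $\ll$-decomposition from \axiomO{2}---after which the quotient argument using \autoref{prp:GlimmO8} and \autoref{prp:LiftDivO8} closes the proof cleanly.
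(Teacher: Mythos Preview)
Your proposal is correct and follows essentially the same route as the paper. The only cosmetic difference is that the paper sets $D:=\{z\in S:2z\leq x\}$ and works with the ideal $I$ generated by $D$ (characterizing membership via $s\leq\sum_k z_k$), whereas you write down that ideal explicitly as your set $A$ and verify the ideal axioms by hand; after that, both arguments invoke \autoref{prp:GlimmO8} and \autoref{prp:LiftDivO8} in the quotient in the same way to reach the contradiction.
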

\begin{proof}
The equivalence between~(2) and~(3) follows from \autoref{prp:2DivImplKDiv}.
To show that~(2) implies~(1), assume that $S$ is weakly $(2,\omega)$-divisible, and note that a nonzero elementary \CuSgp{} $T$ is not weakly $(2,\omega)$-divisible.
Indeed, if $x\in T$ is minimal, nonzero and $x\neq 2x$, then one can see that $x$ is not weakly $(2,\omega )$-divisible.
Since weak $(2,\omega)$-divisibility passes to ideals and quotients, it follows that $S$ has no nonzero elementary ideal-quotients.
 
Conversely, to show that~(1) implies~(2), assume now that $S$ has no nonzero elementary ideal-quotients and let $x\in S$. 
The proof is inspired by that of \cite[Theorem~6.7]{AraGooPerSil10NonSimplePI}.
Set
\[
D:= \big\{ z\in S : 2z\leq x \big\}.
\]

Let $I\subseteq S$ be the ideal generated by $D$.
Note that $s\in S$ belongs to $I$ if and only if there exists a sequence $(z_k)_k$ in $D$ such that $s\leq\sum_{k=0}^\infty z_k$.

\textbf{Claim:} \emph{We have $x\in I$.}

To prove the claim, let $\pi\colon S\to S/I$ denote the quotient map.
To reach a contradiction, assume that $\pi(x)\neq 0$. 
By \cite[Proposition~5.1.3]{AntPerThi18TensorProdCu} and  \autoref{prp:O8Quot}, we know that \axiomO{5}, \axiomO{6} and \axiomO{8} pass to quotients.

Hence, we can apply \autoref{prp:GlimmO8} to $S/I$.
Thus, since $\pi(x)\neq 0$, we obtain $e\in S/I$ with $0\neq 2e\leq \pi (x)$.
Choose $e'\in S/I$ with $0\neq e'\ll e$.
By \autoref{prp:LiftDivO8}, we obtain $z\in S$ such that $e'\ll\pi(z)$ and $2z\leq x$.
Consequently, $z\in D\subseteq I$, which implies $\pi(z)=0$ and therefore $e'=0$, a contradiction.
Thus, we have $\pi(x)=0$ and so $x\in I$, which proves the claim.

Now, given any $x'\in S$ such that $x'\ll x$,  take $u\in S$ with $x'\ll u\ll x$.
Using that $u\ll x\in I$, we obtain $z_1,\ldots,z_n\in D$ such that
$u \leq z_1+\ldots+z_n $.

Note that we have $2z_j\leq x$ for every $j$ and $x'\ll u \leq z_1+\ldots +z_n$. It follows that $x$ is weakly $(2,\omega )$-divisible, as desired.
\end{proof}

\begin{thm}
\label{prp:charDiv}
Let $A$ be a \ca.
Then the following are equivalent:
\begin{enumerate}
\item
$A$ is nowhere scattered;
\item
every element in $\Cu(A)$ is weakly $(2,\omega)$-divisible;
\item
every element in $\Cu(A)$ is weakly $(k,\omega)$-divisible for every $k\geq 2$;
\end{enumerate}
\end{thm}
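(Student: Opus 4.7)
The plan is to assemble this equivalence from the pieces already established, using the Cuntz semigroup as a bridge between nowhere scatteredness of $A$ and divisibility.

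First, I would apply \autoref{prp:IdealQuotient} to $S = \Cu(A)$. For this I need to verify that $\Cu(A)$ satisfies the three hypotheses \axiomO{5}, \axiomO{6}, \axiomO{8}: the first two are classical properties of the Cuntz semigroup of any \ca{} recorded in \autoref{pgr:Cu}, and \axiomO{8} is precisely the content of \autoref{prp:CuHasO8}. This application directly yields the equivalence of conditions~(2) and~(3) of the theorem, and identifies them both with the condition that $\Cu(A)$ has no nonzero elementary ideal-quotients (in the sense of \autoref{pgr:Elementary}).

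Next, by \autoref{prp:firstChar}, condition~(1) is equivalent to $A$ having no nonzero elementary ideal-quotients as a \ca. So the proof reduces to showing that $A$ has no nonzero elementary ideal-quotients if and only if $\Cu(A)$ has no nonzero elementary ideal-quotients. For this I would use the well-known correspondence between (closed, two-sided) ideals of $A$ and ideals of $\Cu(A)$, together with the fact that for ideals $I \subseteq J$ of $A$ one has a natural isomorphism $\Cu(J/I) \cong \Cu(J)/\Cu(I)$. Under this correspondence, nonzero ideal-quotients of $A$ match nonzero ideal-quotients of $\Cu(A)$, and by \autoref{prp:CharElementary} such an ideal-quotient $B = J/I$ is elementary as a \ca{} if and only if $\Cu(B)$ is elementary as a \CuSgp. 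Hence the two vanishing conditions coincide.

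Since \autoref{prp:IdealQuotient} is doing essentially all the combinatorial work and the remaining translation is by now standard, I do not expect a hard step here: the only thing to be slightly careful about is invoking the refined notion of elementary \CuSgp{} from \autoref{pgr:Elementary} in the right place, so that \autoref{prp:CharElementary} actually applies and avoids spurious cases like $\{0,\infty\}$. Stringing the three ingredients (\autoref{prp:firstChar}, \autoref{prp:CuHasO8}+\autoref{prp:IdealQuotient}, and \autoref{prp:CharElementary} via the ideal correspondence) together then delivers the equivalence of (1), (2), (3).
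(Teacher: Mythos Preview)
Your proposal is correct and follows essentially the same route as the paper: verify \axiomO{5}, \axiomO{6}, \axiomO{8} for $\Cu(A)$ via \autoref{pgr:Cu} and \autoref{prp:CuHasO8}, invoke \autoref{prp:IdealQuotient}, and then translate the elementary ideal-quotient condition between $A$ and $\Cu(A)$ using the ideal correspondence together with \autoref{prp:CharElementary} and \autoref{prp:firstChar}. The paper packages the ideal correspondence by citing \cite[Proposition~5.1.10]{AntPerThi18TensorProdCu} rather than spelling out $\Cu(J/I)\cong\Cu(J)/\Cu(I)$, but the argument is the same.
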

\begin{proof}
It follows from \cite[Proposition~5.1.10]{AntPerThi18TensorProdCu} that ideal-quotients in $A$ naturally correspond to ideals-quotients in $\Cu(A)$.
Thus, by \autoref{prp:CharElementary}, $A$ has no nonzero elementary ideal-quotients if and only if $\Cu(A)$ has no nonzero elementary ideal-quotients.
Applying \autoref{prp:firstChar}, we see that $A$ is nowhere scattered if and only if $\Cu(A)$ has no nonzero elementary ideal-quotients.

As noted in \autoref{pgr:Cu}, $\Cu(A)$ is a \CuSgp{} satisfying \axiomO{5} and \axiomO{6}.
By \autoref{prp:CuHasO8}, $\Cu(A)$ also satisfies \axiomO{8}.
Now the result follows from \autoref{prp:IdealQuotient}.
\end{proof}

\begin{rmk}
\label{rmk:RobRor}
Let us indicate an alternative proof of \autoref{prp:charDiv} that is based on the results by R{\o}rdam and Robert in \cite{RobRor13Divisibility}.

Let $A$ be a \ca.
By \autoref{prp:Morita}, $A$ is nowhere scattered if and only if its stabilization is.
We may therefore assume that $A$ is stable.

First, assume that $A$ is nowhere scattered.
Let $x\in\Cu(A)$ and $k\geq 2$.
Choose $a\in A_+$ with $x=[a]$, and let $B:=\overline{aAa}$ be the generated hereditary sub-\ca.
By \cite[Theorem~5.3(iii)]{RobRor13Divisibility}, $B$ has no irreducible representations of dimension less than or equal to $k-1$ if and only if $[a]$ is weakly $(k,\omega)$-divisible in $\Cu(B)$ (equivalently, in $\Cu(A)$).
Thus, it follows from \autoref{prp:firstChar} that $x$ is weakly $(k,\omega)$-divisible.

Conversely, assume that every element of $\Cu(A)$ is weakly $(2,\omega)$-divisible.
To reach a contradiction, assume that $A$ is not nowhere scattered.
We need to find $x\in\Cu(A)$ that is not weakly $(2,\omega)$-divisible.
Applying \autoref{prp:firstChar} we obtain a hereditary sub-\ca{} $B\subseteq A$ that admits a one-dimensional irreducible representation $\pi$.
Choose $c\in B_+$ with $\pi(b)\neq 0$, and consider the hereditary sub-\ca{} $C:=\overline{cAc}$.
Then $\pi$ restricts to a one-dimensional irreducible representation of $C$.
Using \cite[Theorem~5.3(iii)]{RobRor13Divisibility} again, it follows that $[c]$ is not weakly $(2,\omega)$-divisible in $\Cu(C)$, and therefore also not in $\Cu(A)$.
\end{rmk}

\section{Real rank zero and stable rank one}
\label{sec:rr0sr1}

In this section, we establish characterizations of nowhere scatteredness among \ca{s} of real rank zero or stable rank one.

A unital \ca{} has \emph{real rank zero} if the invertible, selfadjoint elements are dense in the set of selfadjoint elements.
A nonunital \ca{} has real rank zero if its minimal unitization does.
This important property is well-studied and enjoys many nice permanence properties.
We refer to \cite[p.453ff]{Bla06OpAlgs} and \cite{BroPed91CAlgRR0}.


The \emph{Murray-von Neumann semigroup} $V(A)$ of a \ca{} $A$ is defined as the set of equivalence classes of projections in $A\otimes\mathcal{K}$, where two projections $p$ and $q$ are equivalent if there exists a partial isometry $v\in A\otimes\mathcal{K}$ with $p=vv^*$ and $q=v^*v$. Equipped with the  addition induced by orthogonal sum and the algebraic order~$\leq_{\rm{alg}}$, $V(A)$ becomes a pre-ordered monoid.

A semigroup $S$ is \emph{weakly divisible} if for every $x\in S$ there exist $y,z\in S$ such that $x=2y+3z$.

\begin{thm}
\label{prp:charDivRR0}
Let $A$ be a \ca{} of real rank zero.
Then the following are equivalent:
\begin{enumerate}
\item
$A$ is nowhere scattered;
\item
$V(A)$ is weakly divisible;
\item
$\Cu(A)$ is weakly divisible.
\end{enumerate}
\end{thm}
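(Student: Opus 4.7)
The plan is to prove the cyclic chain $(3) \Rightarrow (1) \Rightarrow (2) \Rightarrow (3)$.

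For $(3) \Rightarrow (1)$, note that if $x = 2y + 3z$ for some $y,z \in \Cu(A)$, then $x = y + y + z + z + z$ expresses $x$ as a sum of five elements whose doubles all lie below $x$ (indeed, $2y \leq x$ and $2z \leq 3z \leq x$). Hence weak divisibility implies weak $(2,\omega)$-divisibility of every element of $\Cu(A)$, and \autoref{prp:charDiv} yields that $A$ is nowhere scattered.

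For $(1) \Rightarrow (2)$, I would appeal to the Perera-R{\o}rdam characterization from \cite{PerRor04AFembeddings}, which asserts that a unital, real rank zero \ca{} has weakly divisible Murray-von Neumann semigroup if and only if it has no characters. Given a nonzero projection $p \in A \otimes \Cpct$, the corner $B := p(A \otimes \Cpct)p$ is unital and inherits real rank zero, and by \autoref{prp:Morita} together with \autoref{prp:permHerQuot} it is nowhere scattered, hence has no one-dimensional irreducible representations by \autoref{prp:firstChar}. The Perera-R{\o}rdam theorem then supplies projections $q,r \in B$ with $[p] = 2[q] + 3[r]$ in $V(B)$, and hence in $V(A)$. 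Since every element of $V(A)$ has the form $[p]$, the semigroup $V(A)$ is weakly divisible.

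For $(2) \Rightarrow (3)$, the idea is to lift the divisibility of $V(A)$ to arbitrary elements of $\Cu(A)$ by exploiting that in real rank zero projection classes are cofinal. Given $x \in \Cu(A)$, real rank zero allows one to choose an increasing sequence of projections $p_1 \leq p_2 \leq \ldots$ in $A \otimes \Cpct$ (as actual subprojections) with $x = \sup_n [p_n]$. One then constructs projections $q_n, r_n$ inductively so that $[p_n] = 2[q_n] + 3[r_n]$ with $[q_n]$ and $[r_n]$ increasing in $n$. The base case is immediate from $(2)$. For the inductive step, since $p_{n+1} - p_n$ is again a projection, $(2)$ yields $[p_{n+1}-p_n] = 2[q'] + 3[r']$, after which one takes $q_{n+1}$ and $r_{n+1}$ as projections (in the stable algebra) realizing the orthogonal sums $[q_n] + [q']$ and $[r_n] + [r']$. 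Setting $y := \sup_n[q_n]$ and $z := \sup_n [r_n]$ in $\Cu(A)$, two applications of \axiomO{4} give
\[
x = \sup_n [p_n] = \sup_n \bigl(2[q_n] + 3[r_n]\bigr) = 2y + 3z.
\]
The main obstacle is this last step: the splittings of the $[p_n]$ must be arranged compatibly to pass to the supremum, and the incremental trick of splitting $p_{n+1} - p_n$ separately and grafting its witnesses onto the previous ones is what makes this possible.
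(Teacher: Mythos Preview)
Your argument follows the same cyclic strategy as the paper, and each implication is handled along the same lines: for $(3)\Rightarrow(1)$ the paper sets $s:=y+z$ (so $2s\leq x\leq 3s$) where you use the five summands $y,y,z,z,z$; for $(2)\Rightarrow(3)$ both you and the paper build compatible splittings along an increasing sequence of projection classes and pass to the supremum via \axiomO{4}, the only difference being that you split the increments $p_{n+1}-p_n$ while the paper phrases the compatibility abstractly via the algebraic order on $V(A)$.

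One correction is needed in $(1)\Rightarrow(2)$: the Perera--R{\o}rdam characterization you invoke actually says that, for a unital real rank zero \ca{} $B$, the monoid $V(B)$ is weakly divisible if and only if $B$ has no \emph{finite-dimensional} irreducible representations, not merely no characters --- indeed $B=M_2$ has no characters yet $V(M_2)\cong\NN$ is not weakly divisible. This does not damage your argument, since \autoref{prp:firstChar}(7) already shows that the nowhere scattered corner $B=p(A\otimes\Cpct)p$ has no finite-dimensional irreducible representations; you just need to cite that stronger conclusion rather than the one-dimensional case. (The paper appeals to \cite[Corollary~6.8]{AraGooPerSil10NonSimplePI} at this step rather than \cite{PerRor04AFembeddings}, but the logic is parallel.)
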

\begin{proof}
To show that~(1) implies~(2), assume that $A$ is nowhere scattered. By \autoref{prp:Morita}, we may assume $A$ to be stable. Thus, any element in $V(A)$ is of the form $[p]$ with $p$ a projection in $A$.

Then, given any $[p]\in V(A)$, it follows from \autoref{prp:firstChar} (7) that $pAp$ has no one-dimensional irreducible representation. As shown in the proof of \cite[Corollary~6.8]{AraGooPerSil10NonSimplePI}, this implies that $[p]=2y+3z$ for some $y,z\in V(A)$.

Assume now that (2) is satisfied.
In this case, if $x'\leq_{\rm{alg}} x$ and $x'=2y'+3z'$ in $V(A)$, then there exist $y,z\in V(A)$ with $y'\leq_{\rm{alg}} y$ and $z'\leq_{\rm{alg}} z$, and such that $x=2y+3z$. 
Indeed, given $w\in V(A)$ such that $x'+w=x$, we can find $u,v$ satisfying $w=2u+3v$. 
Setting $y=y'+u$ and $z=z'+v$, the result follows.
 
Since $A$ has real rank zero, there exists an order preserving, monoid morphism $\alpha\colon V(A)\to \Cu (A)$ with sup-dense image; see, for example, \cite[Theorem~5.7]{ThiVil22DimCu}. That is, every element $x\in\Cu (A)$ can be written as the supremum of an increasing sequence in $\alpha (V(A))$. By the remark above, this implies that 
\[
 x=\sup_n (2y_n + 3z_n)
\]
with $(y_n)_n, (z_n)_n$ increasing sequences in $\alpha (V(A))$.

Thus, we get $x=2\sup_n (y_n) + 3\sup_n (z_n)$, as desired.

Finally, to show that~(3) implies~(1), assume that $\Cu(A)$ is weakly divisible.
By \autoref{prp:charDiv} it suffices to show that every element in $\Cu(A)$ is weakly $(2,\omega)$-divisible.
So let $x',x\in\Cu(A)$ satisfy $x'\ll x$.
By assumption, we obtain $y,z\in\Cu(A)$ such that $x=2y+3z$.
Set $s:=x+y$.
Then, one gets $2s=2x+2y\leq x$ and $x' \leq x \leq 3y+3z = 3s$, as required.
\end{proof}

\begin{rmk}
\label{rmk:charDivRR0}
If $A$ does not have real rank zero, then weak divisibility of $V(A)$ is not equivalent to nowhere scatteredness.

Take, for example, the Jiang-Su algebra $\mathcal{Z}$. Then, since $1\in\NN$ cannot be decomposed as $2x+3y$ in $\NN$, it follows that $V(\mathcal{Z})\cong\NN$ is not weakly divisible.

However, it follows from \autoref{exa:Sim} that $\mathcal{Z}$ is nowhere scattered.
\end{rmk}

A unital \ca{} has \emph{stable rank one} if its invertible elements are dense.
A nonunital \ca{} has stable rank one if its minimal unitization does.
Just as real rank zero, the property of stable rank one enjoys many permanence properties.
We refer to \cite[Section~V.3.1]{Bla06OpAlgs}.

Cuntz semigroups of stable rank one \ca{s} have additional regularity characteristics;
see \cite{Thi20RksOps, AntPerRobThi22CuntzSR1}.
In particular, by \cite[Theorem~3.5]{AntPerRobThi22CuntzSR1}, they satisfy the \emph{Riesz Interpolation Property}:
If $x_1,x_2,y_1,y_2$ are such that $x_j\leq y_k$ for each $j,k\in\{1,2\}$, then there exists $z$ with $x_1,x_2\leq z\leq y_1,y_2$.

A \CuSgp{} $S$ is \emph{countably based} if it contains a countable subset $B$ such that every element in $S$ is the supremum of an increasing sequence of elements in~$B$.
If $S$ is a countably based \CuSgp{}, then every upward directed subset of $S$ has a supremum. Separable \ca{s} have countably based Cuntz semigroups.

Given a separable, stable rank one \ca{} $A$, it follows from  \cite[Theorem~3.8]{AntPerRobThi22CuntzSR1} that $\Cu (A)$ is \emph{inf-semilattice ordered}. That is, infima of finite sets exists and, for every triple $x,y,z$, one has 
\[
 (x+z)\wedge(y+z)=(x\wedge y)+z.
\]

\begin{dfn}
\label{dfn:IA}
We say that a \CuSgp{} $S$ satisfies the \emph{interval axiom} if for all $x',x,y,u,v\in S$ satisfying
\[
x'\ll x, \quad
x \ll y+u, \andSep
x \ll y+v,
\]
there exists $w\in S$ such that
\[
x'\ll y+w, \andSep w\ll u,v.
\]
\end{dfn}

\begin{rmk}
\label{rmk:IA}
The interval axiom as defined in \autoref{dfn:IA} above is the $\Cu$-version of the `algebraic interval axiom' considered by Wehrung for positively ordered monoids in \cite[Paragraph~1.3]{Weh96MonIntervals}.
\end{rmk}

\begin{prp}
\label{prp:charInfSemiOrdered}
Let $S$ be a countably based \CuSgp{}.
Then $S$ is inf-semilattice ordered if and only if $S$ has the Riesz Interpolation Property and satisfies the interval axiom.
\end{prp}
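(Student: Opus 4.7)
The plan is to prove each direction separately, using countability mainly to obtain suprema of upward directed subsets.

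For the forward implication, assume $S$ is inf-semilattice ordered. The Riesz Interpolation Property is essentially free: given $x_1,x_2\leq y_1,y_2$, the element $z:=y_1\wedge y_2$ satisfies $z\leq y_1,y_2$ and $x_j\leq y_1\wedge y_2=z$ for $j=1,2$. For the interval axiom, suppose $x'\ll x$, $x\ll y+u$ and $x\ll y+v$. Using the identity $(y+u)\wedge (y+v)=y+(u\wedge v)$, we get $x\leq y+(u\wedge v)$. Writing $u\wedge v=\sup_n w_n$ with $w_n\ll u\wedge v$ via \axiomO{2}, and picking $x''$ with $x'\ll x''\ll x$, we have $x''\ll x\leq\sup_n(y+w_n)$, so $x''\leq y+w_n$ for some $n$, hence $x'\ll y+w_n$. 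Since $w_n\ll u\wedge v\leq u,v$, the element $w:=w_n$ witnesses the interval axiom.

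For the backward direction, assume $S$ satisfies the Riesz Interpolation Property and the interval axiom. I first observe that in a countably based $\CatCu$-semigroup every upward directed subset has a supremum: given such a $D$, fix a countable basis $B=\{b_i\}_i$; for each $i$ with $b_i\ll d$ for some $d\in D$ pick one such $d_i\in D$, and recursively use directedness to produce an increasing sequence $(d_n')$ in $D$ with $d_n'\geq d_1,\dots,d_n$. Then $\sup_n d_n'$ exists by \axiomO{1}, and is easily seen to equal $\sup D$ using that each $d\in D$ is the supremum of basis elements way-below it. Applying this to $L:=\{z\in S:z\leq x,\ z\leq y\}$, which is upward directed by the Riesz Interpolation Property, produces $x\wedge y:=\sup L$, which is readily checked to be the infimum of $\{x,y\}$.

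It remains to establish $(x+z)\wedge(y+z)=(x\wedge y)+z$. The inequality $(x\wedge y)+z\leq(x+z)\wedge(y+z)$ is automatic. For the converse, set $w:=(x+z)\wedge(y+z)$ and let $w'\ll w$. Pick $w''$ with $w'\ll w''\ll w$. Since $w''\ll w\leq x+z$ and $w''\ll w\leq y+z$, the interval axiom applied to $w'\ll w''$, $w''\ll z+x$, $w''\ll z+y$ yields $t\in S$ with $w'\ll z+t$ and $t\ll x,y$. Then $t\leq x\wedge y$, so $w'\leq z+(x\wedge y)$. Taking the supremum over a $\ll$-increasing sequence $w'_n$ with $w=\sup_n w'_n$ provided by \axiomO{2} gives $w\leq(x\wedge y)+z$.

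The main subtlety is the existence of suprema of upward directed sets under the countably-based hypothesis; once that is in place, the Riesz Interpolation Property produces binary infima and the interval axiom delivers the distributivity law almost mechanically. The rest is a matter of carefully inserting intermediate elements so that the interval axiom, which is phrased in terms of $\ll$, can be invoked from the $\leq$-relations defining $w=(x+z)\wedge(y+z)$.
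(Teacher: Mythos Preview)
Your proof is correct and follows essentially the same route as the paper's: verify the Riesz Interpolation Property via $y_1\wedge y_2$, derive the interval axiom from the distributivity identity, and conversely use Riesz interpolation to make $L=\{z:z\leq x,y\}$ directed, countable-basedness to obtain $\sup L=x\wedge y$, and the interval axiom to get the distributivity inequality. In fact your argument is slightly more careful than the paper's in two spots---you insert the intermediate element $x''$ (respectively $w''$) so that the $\ll$-hypotheses of the interval axiom are genuinely met, and you sketch why directed sets in a countably based \CuSgp{} have suprema rather than just asserting it.
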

\begin{proof}
To show that forward implication, assume that $S$ is inf-semilattice ordered.
Given $x_j,y_k\in S$ with $x_j\leq y_k$ for $i,j\in\{1,2\}$, we have $x_1,x_2\leq (y_1\wedge y_2)\leq y_1,y_2$, which shows that $S$ has the Riesz Interpolation Property.
To verify the interval axiom, let $x',x,y,u,v\in S$ satisfy
\[
x'\ll x, \quad
x \ll y+u, \andSep
x \ll y+v.
\]

Then
\[
x'\ll x \leq (y+u)\wedge(y+v)=y+(u\wedge v),
\]
which allows us to choose $w\in S$ such that $x'\ll y+w$ and $w\ll (u\wedge v)$. Thus, $w$ has the desired properties.

Let us show the backward implication.
Given $x,y\in S$, it follows from the Riesz Interpolation Property that the set $L:=\{z\in S : z\leq x,y\}$ is upward directed.
Since $S$ is countably based, the supremum of $L$ exists and $x\wedge y=\sup L$.
Thus, $S$ is an inf-semilattice.

To show that addition distributes over infima, let $x,y,z\in S$ and note that the inequality $(x+z)\wedge(y+z)\geq (x\wedge y)+z$ is clear. For the other inequality, set $w=(x+z)\wedge(y+z)$, and let $w'\in S$ satisfy $w'\ll w$.
Applying the interval axiom, we obtain $s$ such that $w'\leq z+s$ and $s\leq x,y$.
Then $s\leq x\wedge y$, and therefore $w'\leq (x\wedge y)+z$.
Since this holds for every $w'$ way-below $w$, we get $w\leq (x\wedge y)+z$.
\end{proof}

A submonoid $T$ of a \CuSgp{} $S$ is said to be a \emph{sub-\CuSgp{}} if $T$ is a \CuSgp{} with the induced order, and if the inclusion $T\to S$ preserves suprema of increasing sequences and the way-below relation.

In analogy to its definition for \ca{s} (as defined in \autoref{sec:permanence}), we say that a property $\mathcal{P}$ for \CuSgp{s} satisfies the \emph{L{\"o}wenheim-Skolem condition} if for every \CuSgp{} with property $\mathcal{P}$ there exists a family $\mathcal{S}$ of countably based sub-\CuSgp{s} of $S$ each having property $\mathcal{P}$, and such that $\mathcal{S}$ is $\sigma$-complete and cofinal;
see \cite[Paragraph~5.2]{ThiVil21DimCu2}.
The next result can be proved with the methods that are used to to prove \cite[Proposition~5.3]{ThiVil21DimCu2}.
We omit the details.

\begin{prp}
\label{prp:LS-IA-RIP}
The interval axiom and the Riesz Interpolation Property both satisfy the L{\"o}wenheim-Skolem condition.
\end{prp}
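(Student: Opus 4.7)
The plan is to adapt the closing-off argument used for \cite[Proposition~5.3]{ThiVil21DimCu2}, where analogous L{\"o}wenheim-Skolem results are established for axioms \axiomO{5}, \axiomO{6} and weak cancellation. Fix a \CuSgp{} $S$ satisfying the property $\mathcal{P}$ in question, and let $\mathcal{S}$ denote the family of countably based sub-\CuSgp{s} of $S$ that satisfy $\mathcal{P}$. The $\sigma$-completeness of $\mathcal{S}$ is routine: given a countable directed $\mathcal{D}\subseteq\mathcal{S}$, the union $T_\infty:=\bigcup\mathcal{D}$ is a countable submonoid; preservation of the way-below relation and of suprema of increasing sequences by each inclusion $T\hookrightarrow S$ (for $T\in\mathcal{D}$) transfers to $T_\infty\hookrightarrow S$; and any finite configuration witnessing the hypothesis of $\mathcal{P}$ already lies in some $T\in\mathcal{D}$, so the chosen witness belongs to $T_\infty$.

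For cofinality, given any countable subset $T_0\subseteq S$, I would inductively construct an increasing chain $T_0\subseteq T_1\subseteq \ldots$ of countable subsets of $S$, where $T_{n+1}$ is obtained from $T_n$ by closing under the following operations: adjoining $0$ and finite sums; for every $x\in T_n$, fixing a $\ll$-increasing sequence $(x_k)_k$ in $S$ with $x=\sup_k x_k$ and adjoining all its terms; adjoining suprema in $S$ of the countably many increasing sequences arising from previously chosen approximations; and, for each finite configuration in $T_n$ satisfying the hypothesis of $\mathcal{P}$, adjoining a chosen witness in $S$ of the conclusion. For the Riesz Interpolation Property, the witness is an interpolant $z\in S$ for each quadruple $(x_1,x_2,y_1,y_2)$ in $T_n$ with $x_j\leq y_k$ for $j,k\in\{1,2\}$; for the interval axiom, the witness is an element $w\in S$ provided by \autoref{dfn:IA} for each quintuple $(x',x,y,u,v)$ in $T_n$ satisfying its hypothesis. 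Setting $T:=\bigcup_n T_n$ then yields a countable subset of $S$ containing $T_0$ and witnessing, by construction, every instance of $\mathcal{P}$ with parameters in $T$.

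The main obstacle will be verifying that $T$ is genuinely a sub-\CuSgp, that is, that the inclusion $T\hookrightarrow S$ preserves suprema of countable increasing sequences and the way-below relation in both directions. The subtle point is that $a\ll_T b$ must imply $a\ll_S b$: this is handled exactly as in \cite{ThiVil21DimCu2} by exploiting the $\ll$-approximating sequences adjoined at every stage to reduce way-below relations in $T$ to ones already witnessed in~$S$, while the adjunction of suprema of those approximating sequences ensures axiom \axiomO{1} for $T$. Once this preservation is in place, axiom \axiomO{2} for $T$ holds by construction, axioms \axiomO{3} and \axiomO{4} are inherited from $S$, and $\mathcal{P}$ holds in $T$ by the witness-adjoining step. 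Since this procedure works for any countable $T_0\subseteq S$, cofinality of $\mathcal{S}$ follows, completing the proof.
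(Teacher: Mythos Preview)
Your proposal is correct and follows precisely the approach the paper indicates: the paper omits the proof entirely, stating only that it ``can be proved with the methods that are used to prove \cite[Proposition~5.3]{ThiVil21DimCu2},'' and your closing-off argument is exactly that adaptation. Your sketch of $\sigma$-completeness and the inductive witness-adjoining construction for cofinality, together with the care you note for verifying that $T$ is a genuine sub-\CuSgp, matches what the cited methods entail.
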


If $A$ is a separable \ca{} of stable rank one, then $\Cu(A)$ is inf-semilattice ordered by \cite[Theorem~3.8]{AntPerRobThi22CuntzSR1}, and thus satisfies the interval axiom. 
With the techniques of \cite{ThiVil21DimCu2}, we can show that this also holds in the nonseparable case.

\begin{cor}
\label{prp:CuSR1}
Let $A$ be a \ca{} of stable rank one.
Then $\Cu(A)$ satisfies weak cancellation, the Riesz Interpolation Property, and the interval axiom.
\end{cor}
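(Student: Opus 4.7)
My plan is to treat the three properties separately and, for the deeper two, reduce to the separable case via a combined L\"owenheim-Skolem argument.

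Weak cancellation of $\Cu(A)$ for $A$ of stable rank one requires no separability hypothesis and is established in \cite[Theorem~4.3]{RorWin10ZRevisited}; this gives the first claim directly. For the separable case of the remaining two properties, I would invoke \cite[Theorem~3.8]{AntPerRobThi22CuntzSR1}, which asserts that $\Cu(A)$ is inf-semilattice ordered whenever $A$ is a separable \ca{} of stable rank one. Since $\Cu(A)$ is then countably based, \autoref{prp:charInfSemiOrdered} immediately yields that it satisfies both the Riesz Interpolation Property (RIP) and the interval axiom (IA). This handles the separable case.

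For the general case, the plan is to combine two L\"owenheim-Skolem conditions. On the \ca{} side, stable rank one satisfies the L\"owenheim-Skolem condition, so the separable, stable rank one sub-\ca{s} of $A$ form a $\sigma$-complete, cofinal family. On the \CuSgp{} side, \autoref{prp:LS-IA-RIP} provides the analogous condition for RIP and IA. Using a bootstrapping procedure in the spirit of \autoref{prp:separablyInheritable}, any separable sub-\ca{} $B_0\subseteq A$ can be enlarged to a separable, stable rank one sub-\ca{} $B\subseteq A$ such that the induced map $\Cu(B)\to\Cu(A)$ is a sub-\CuSgp{} inclusion. The family of sub-\CuSgp{s} of $\Cu(A)$ obtained in this way is then cofinal, and each member satisfies RIP and IA by the separable case. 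Given a finite tuple of elements of $\Cu(A)$ appearing in the hypothesis of RIP or IA, we locate such a $B$ with $\Cu(B)$ containing them; the interpolant (respectively, the witness $w$) produced inside $\Cu(B)$ satisfies all the required relations in $\Cu(A)$, since the inclusion preserves order, addition, suprema of increasing sequences, and the way-below relation.

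The main obstacle is the bootstrapping step itself: ensuring simultaneously that $B$ is separable, has stable rank one, has $\Cu(B)$ containing the prescribed elements, and---crucially for the IA hypothesis $x'\ll x$---that the inclusion $\Cu(B)\hookrightarrow\Cu(A)$ reflects the relevant way-below relations. The standard remedy is to alternate enlargements of $B$, each addressing one of these countably generated requirements, and to close up at the end; convergence is guaranteed because each property involved satisfies a L\"owenheim-Skolem condition. Once this technical construction is in place, the reduction to the separable case proceeds as sketched above.
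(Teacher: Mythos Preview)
Your overall strategy---reduce the interval axiom to the separable case via a L\"owenheim--Skolem argument, then invoke \cite[Theorem~3.8]{AntPerRobThi22CuntzSR1} and \autoref{prp:charInfSemiOrdered}---is exactly the paper's approach. Two points of comparison are worth making. First, the paper obtains the Riesz Interpolation Property directly from \cite[Theorem~3.5]{AntPerRobThi22CuntzSR1}, which holds without any separability assumption; there is no need to run RIP through the reduction. Second, for the crucial step of producing a separable, stable rank one sub-\ca{} $B\subseteq A$ such that $\Cu(B)\to\Cu(A)$ is an order-embedding containing the prescribed elements, the paper simply cites \cite[Proposition~6.1]{ThiVil21DimCu2}, which packages precisely the bootstrapping construction you outline; your plan is correct but re-derives that result.

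One small confusion: your invocation of \autoref{prp:LS-IA-RIP} is a red herring here. That proposition says that \emph{if} a \CuSgp{} satisfies IA (or RIP), \emph{then} a $\sigma$-complete cofinal family of countably based sub-\CuSgp{s} with the same property exists. Since you are trying to establish IA for $\Cu(A)$, you cannot appeal to it. What you actually use---and what suffices---is that $\Cu(B)$ satisfies IA for each separable, stable rank one $B$, together with the fact that the inclusion $\Cu(B)\hookrightarrow\Cu(A)$ is a sub-\CuSgp{} embedding. Drop the reference to \autoref{prp:LS-IA-RIP} and your argument is clean.
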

\begin{proof}
It follows from \cite[Theorem~4.3]{RorWin10ZRevisited} and \cite[Theorem~3.5]{AntPerRobThi22CuntzSR1} that $\Cu(A)$ is weakly cancellative and has the Riesz Interpolation Property.
To show that $\Cu(A)$ satisfies the interval axiom, let $x',x,y,u,v\in \Cu(A)$ satisfy
\[
x'\ll x, \quad
x \ll y+u, \andSep
x \ll y+v.
\]

Applying \cite[Proposition~6.1]{ThiVil21DimCu2}, we obtain a $\sigma$-complete and cofinal collection of separable sub-\ca{s} $B\subseteq A$ such that the inclusion $B\to A$ induces an order-embedding $\Cu(B)\to\Cu(A)$ whose image contains $x',x,y,u,v$.
Using that stable rank one satisfies the L{\"o}wenheim-Skolem condition, we may choose such separable sub-\ca{s} $B\subseteq A$ with  stable rank one.
Thus, each Cuntz semigroup $\Cu(B)$ is inf-semilattice ordered by \cite[Theorem~3.8]{AntPerRobThi22CuntzSR1}.
It follows from \autoref{prp:charInfSemiOrdered} that $\Cu(B)$ satisfies the interval axiom.
Hence, an element $w$ with the desired properties can be found in $\Cu(B)$ and, since $\Cu(B)$ can be identified with a sub-\CuSgp{} of $\Cu (A)$, we deduce that $\Cu (A)$ satisfies the interval axiom.
\end{proof}

\begin{rmk}
Cuntz semigroups of separable \ca{s} of real rank zero do not necessarily satisfy the interval axiom.
Indeed, in \cite{Goo96RR0K0NotRiesz} Goodearl constructs a separable (stably finite, nuclear) \ca{} $A$ of  real rank zero such that $K_0(A)$ does not have the Riesz decomposition property.
Thus, $K_0(A)$ does not have the Riesz interpolation property and, by \cite[Lemma~4.2]{Per97StructurePositive}, it follows that $V(A)$ does not have the Riesz interpolation property either.
However, as noted in \cite{Weh96MonIntervals}, if a refinement monoid satisfies the (algebraic) interval axiom, then it satisfies Riesz interpolation.
By \cite[Lemma~2.3]{AraPar96RefMonWkComparability}, which is based on \cite{Zha90DiagProjMultiplierCa}, $V(A)$ is a refinement monoid and, therefore,  $V(A)$ does not satisfy the algebraic interval axiom.
Using that $\Cu(A)$ is isomorphic to the sequential ideal completion of $V(A)$ (see for example \cite[Remark~5.5.6]{AntPerThi18TensorProdCu}), we deduce that $\Cu(A)$ does not satisfy the interval axiom.
\end{rmk}

\begin{qst}
\label{qst:IntervalAxiom}
For which \ca{s} does the Cuntz semigroup satisfy the interval axiom?
\end{qst}

\begin{lma}
\label{prp:preCharDivSR1}
Let $S$ be a weakly cancellative \CuSgp{} satisfying \axiomO{5}, the interval axiom, and the Riesz Interpolation Property, and let $x\in S$ and $k\geq 2$.
Then $x$ is weakly $(k,\omega)$-divisible if and only if $x$ is $(k,\omega)$-divisible.
\end{lma}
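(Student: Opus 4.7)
The forward implication is immediate: if $x$ is $(k,\omega)$-divisible with witnesses $n$ and $y$, then setting $y_1=\cdots=y_n:=y$ exhibits $x$ as weakly $(k,\omega)$-divisible. So the content lies in the backward direction, and my plan is to reduce it to a combinatorial problem about finding a single common upper bound bounded by ``$x/k$''.

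Assume $x$ is weakly $(k,\omega)$-divisible and fix $x'\ll x$. Using \axiomO{2}, I would first pick intermediates $x'\ll x'''\ll x''\ll x$, then apply weak divisibility to $x''$ to obtain $n\in\NN$ and $y_1,\ldots,y_n\in S$ with $ky_i\leq x$ for each $i$ and $x''\leq y_1+\cdots+y_n$, and finally choose $y_i'\ll y_i$ with $x'''\leq y_1'+\cdots+y_n'$. The heart of the argument is then the following claim: \emph{there exists $y\in S$ with $ky\leq x$ and $y_i'\leq y$ for every $i$.} Granting this, $x'\ll x'''\leq y_1'+\cdots+y_n'\leq ny$, witnessing $(k,\omega)$-divisibility.

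By an elementary induction on $n$, this claim reduces to the following pairwise sub-claim: given $a,b\in S$ with $ka,kb\leq x$, and given $a'\ll a$, $b'\ll b$, there exists $y\in S$ with $ky\leq x$ and $a',b'\leq y$. To prove the sub-claim, I plan to first insert intermediates $a'\ll a''\ll a$, $b'\ll b''\ll b$ and apply \axiomO{5} to $ka''\ll ka\leq x$ to produce an ``almost complement'' $c\in S$ satisfying $ka''+c\leq x\leq ka+c$. Then $kb'\ll kb\leq x\leq ka+c$ and $kb'\ll kb$, so the interval axiom (with the trivial term $0$) yields $w\in S$ with $kb'\ll w$, $w\ll ka+c$, and $w\ll kb$; this $w$ captures precisely the portion of $kb$ that lies ``outside'' of $ka$. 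Using the Riesz Interpolation Property to find intermediate elements in the right ranges, together with weak cancellation to eliminate the extra factor of $k$ that appears when relating $w$ to an element of the form $ke$, one then assembles from $a$ and the part of $b$ detected by $w$ a single element $y\in S$ with $ky\leq x$ dominating both $a'$ and $b'$.

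The main obstacle is the sub-claim itself: converting two ``$k$-th parts'' $a,b$ of $x$ into a single $k$-th part $y$ that dominates both. Crude candidates such as $y:=a+b$ fail because $k(a+b)\leq 2x$ rather than $\leq x$, and the set $\{z\in S:kz\leq x\}$ has no a priori reason to be upward directed in a general \CuSgp{}. Avoiding this multiplicity blow-up is exactly what necessitates the simultaneous use of \axiomO{5}, the interval axiom, the Riesz Interpolation Property, and weak cancellation; each of these four hypotheses plays a distinct and seemingly essential role in controlling the interaction between the way-below relation, the factor $k$, and the implicit ``complements'' of $a$ and $b$ inside $x$.
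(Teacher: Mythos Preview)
Your proposal takes a different route from the paper, and it has a genuine gap at precisely the point you yourself identify as ``the main obstacle''.

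The paper does \emph{not} attempt a direct combinatorial argument. Instead, it invokes the L{\"o}wenheim--Skolem condition (\autoref{prp:LS-IA-RIP} together with the analogous results for \axiomO{5} and weak cancellation) to pass to a countably based sub-\CuSgp{} $T\subseteq S$ containing $x$ and a sequence of witnesses for weak $(k,\omega)$-divisibility. In a countably based setting, \autoref{prp:charInfSemiOrdered} converts the interval axiom and the Riesz Interpolation Property into honest inf-semilattice orderedness, and then \cite[Theorem~5.5]{AntPerRobThi22CuntzSR1} applies verbatim to upgrade weak $(k,\omega)$-divisibility to $(k,\omega)$-divisibility inside $T$. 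Your strategy, by contrast, tries to stay in $S$ and prove directly that the set $\{z:kz\leq x\}$ is ``approximately upward directed''.

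The gap is in your sub-claim. Your application of the interval axiom with $y=0$ is vacuous: from $kb''\ll kb\leq ka+c$ and $kb''\ll kb$ you obtain $w$ with $kb'\ll w$ and $w\ll ka+c$, $w\ll kb$, but any element strictly between $kb'$ and $kb''$ already has these properties, so nothing new has been extracted. In particular, this $w$ does \emph{not} isolate ``the portion of $kb$ that lies outside of $ka$''; for that you would need something like $w\leq c$, which the interval axiom in the form you used does not provide. The real content of the interval axiom lies in a nontrivial choice of the summand $y$, not in $y=0$. The subsequent sentence (``using the Riesz Interpolation Property \ldots\ together with weak cancellation \ldots\ one then assembles \ldots'') is where the entire difficulty resides, and it is left entirely unspecified: you have not indicated how to produce an element $e$ with $ke$ comparable to $w$, nor how to combine $a$ with such an $e$ to get a single $y$ with $ky\leq x$ and $a',b'\leq y$. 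This step is essentially the content of \cite[Theorem~5.5]{AntPerRobThi22CuntzSR1}, which uses actual infima and the distributivity $(u+z)\wedge(v+z)=(u\wedge v)+z$; reproducing it with only the interval axiom and RIP, while perhaps possible, is not a triviality and requires a genuine argument that your sketch does not supply.
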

\begin{proof}
It suffices to show the forward implication.
Thus, assume that $x$ is weakly $(k,\omega)$-divisible.
Then there exists a sequence $(y_n)_n$ in $S$ such that $ky_n\leq x$ for each $n$, and such that $x\leq\sum_{n=1}^\infty y_n$.

By \cite[Propositions~5.3, 5.4]{ThiVil21DimCu2} and \autoref{prp:LS-IA-RIP}, the properties \axiomO{5}, weak cancellation, the interval axiom, and the Riesz Interpolation Property each satisfy the L{\"o}wenheim-Skolem condition.
Using this, we find a countably based sub-\CuSgp{} $T\subseteq S$ containing $x,y_1,y_2,\ldots,$ and such that $T$ satisfies \axiomO{5}, weak cancellation, the interval axiom, and the Riesz Interpolation Property.
By \autoref{prp:charInfSemiOrdered}, $T$ is inf-semilattice ordered.

We note that $x$ is weakly $(k,\omega)$-divisible in $T$.
Applying \cite[Theorem~5.5]{AntPerRobThi22CuntzSR1}, it follows that $x$ is $(k,\omega)$-divisible in $T$, and hence also in $S$.
\end{proof}

\begin{thm}
\label{prp:charDivSR1}
Let $A$ be a \ca{} of stable rank one.
Then the following are equivalent:
\begin{enumerate}
\item
$A$ is nowhere scattered;
\item
every element in $\Cu(A)$ is $(2,\omega)$-divisible;
\item
every element in $\Cu(A)$ is $(k,\omega)$-divisible for every $k\geq 2$.
\end{enumerate}
\end{thm}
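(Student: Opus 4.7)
The plan is to assemble this from \autoref{prp:charDiv}, \autoref{prp:preCharDivSR1}, and \autoref{prp:CuSR1}. Since $(k,\omega)$-divisibility trivially implies weak $(k,\omega)$-divisibility, and the latter (for $k=2$) characterizes nowhere scatteredness by \autoref{prp:charDiv}, it remains only to strengthen ``weakly divisible'' to ``divisible'' under the stable rank one hypothesis.

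First, I would observe that the implication (3)$\Rightarrow$(2) is trivial, and that (2)$\Rightarrow$(1) is immediate: if every element of $\Cu(A)$ is $(2,\omega)$-divisible, then in particular every element is weakly $(2,\omega)$-divisible, so \autoref{prp:charDiv} yields that $A$ is nowhere scattered.

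For the main implication (1)$\Rightarrow$(3), assume $A$ is nowhere scattered. By \autoref{prp:charDiv}, every element of $\Cu(A)$ is weakly $(k,\omega)$-divisible for every $k\geq 2$. Since $A$ has stable rank one, \autoref{prp:CuSR1} tells us that $\Cu(A)$ is weakly cancellative and satisfies the Riesz Interpolation Property and the interval axiom. Moreover, $\Cu(A)$ satisfies \axiomO{5} as noted in \autoref{pgr:Cu}. Hence \autoref{prp:preCharDivSR1} applies to each $x\in\Cu(A)$ and each $k\geq 2$, promoting weak $(k,\omega)$-divisibility to $(k,\omega)$-divisibility. This gives (3), and closes the loop.

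There is no real obstacle; all substantive work has been done in the preceding results. The only thing to be careful about is to invoke the right combination of axioms: the upgrade from weak to strong divisibility in \autoref{prp:preCharDivSR1} requires \emph{all four} properties (\axiomO{5}, weak cancellation, Riesz interpolation, interval axiom) on $\Cu(A)$, and each of these is supplied either by the general theory or by \autoref{prp:CuSR1} using stable rank one.
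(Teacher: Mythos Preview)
Your proposal is correct and follows essentially the same approach as the paper: both combine \autoref{prp:charDiv} with \autoref{prp:preCharDivSR1}, after noting that \autoref{pgr:Cu} and \autoref{prp:CuSR1} supply the axioms (\axiomO{5}, weak cancellation, Riesz interpolation, interval axiom) needed to upgrade weak $(k,\omega)$-divisibility to $(k,\omega)$-divisibility.
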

\begin{proof}
As observed in \autoref{pgr:Cu}, $\Cu(A)$ is a \CuSgp{} satisfying \axiomO{5}.
By \autoref{prp:CuSR1}, $\Cu(A)$ satisfies weak cancellation, the Riesz Interpolation Property, and the interval axiom.
Thus, \autoref{prp:preCharDivSR1} shows that an element in $\Cu(A)$ is weakly $(k,\omega)$-divisible if and only if it is $(k,\omega)$-divisible.
Now the result follows from \autoref{prp:charDiv}.
\end{proof}

The question of whether this stronger divisibility property in $\Cu(A)$ characterizes nowhere scatteredness is connected to the Global Glimm Problem, which will be studied in \cite{ThiVil21arX:Glimm}.

\providecommand{\href}[2]{#2}

\end{document}